\documentclass{aptpub}

\authornames{D. Dolgopyat, P. Hebbar, L. Koralov, M. Perlman} 
\shorttitle{Multi-type branching processes with time-dependent branching rates} 

\numberwithin{equation}{section}

\usepackage[left=0.8in, right=0.8in, top=1.3in, bottom=1.5in]{geometry}

\def\eps{{\varepsilon}}

\def\EXP{\mathbb{E}}

\def\PROB{\mathbb{P}}

\def\bbK{{\mathbb{K}}}

\def\brcK{{\bar\cK}}

\def\breps{{\bar\varepsilon}}

\def\cK{\mathcal{K}}

\def\cP{\mathcal{P}}

\def\cX{\mathcal{X}}

\def\fb{{\mathfrak{b}}}

\def\tP{{\tilde P}}

\def\tZ{{\tilde Z}}

\def\tlambda{{\tilde\lambda}}
\def\tLambda{{\tilde\Lambda}}

\def\DS{\displaystyle}

\def\EXP{{\mathbb{E}}}

\begin{document}

\title{Multi-type branching processes with time-dependent branching rates}

\authorone[Univeristy of Maryland]{D. Dolgopyat} 


\authorone[Univeristy of Maryland]{P. Hebbar}


\authorone[Univeristy of Maryland]{L. Koralov}


\authortwo[Stanford Univeristy]{M. Perlman}

\addressone{Dept. of Mathematics, University of Maryland,
College Park, MD 20742}

\addresstwo{Dept of Mathematics, Stanford University,
Stanford, CA 94305}


\begin{abstract}
Under mild non-degeneracy assumptions on branching rates in each generation, we provide a criterion for almost-sure extinction of
a multi-type branching process with time-dependent branching rates. We also provide a criterion
for the total number of particles (conditioned on survival and divided by the expectation of the resulting random variable) to approach an exponential random variable as time goes to infinity.
\end{abstract}

\keywords{Multi-type branching; extinction probability; exponential limit law; nonnegative matrix product}   
   
\ams{60J80}{60F05; 60F10}

   \section{Introduction} \label{intro}
   
   Mathematical study of branching processes goes back to the work of Galton and Watson \cite{WG} who were interested in the
   probabilities of long-term survival of family names. Later it was realized that similar mathematical models could be used to
   describe the evolution of a variety of biological populations, in genetics \cite{Fi1, Fi2, Fi3, Hal}, and in the study of certain
   chemical and nuclear reactions \cite{Se, HaU}. Branching processes are central in the
   study of evolution of various populations such as bacteria, cancer cells, carriers of a particular form of a gene, etc., where each
   member of the population may die or produce offspring independently of the rest.

    The individuals involved in the process are referred to as particles. In many models, the particles may be of different types, representing individuals with different characteristics. For example, in epidemiology, multi-type continuous time Markov branching process may be used to describe the dynamics of the spread of parasites of two types that can mutate into each other in a common host population \cite{BDR}; when modeling cancer, particles of different types may represent cells that have accumulated different numbers of mutations \cite{Du}; 
   in physics, cosmic ray cascades, which involve electrons producing photons and photons producing electrons,
   can be modeled by a 2-type branching process \cite{Mode}. 
   In addition, a vast number of applications of multi-type branching processes 
   in biology can be found in \cite{HJV, KA}.

   The current paper concerns the long-time behavior of multi-type branching processes with time-dependent branching rates. Let us stress that
   the temporal inhomogeneity is due to the dependence of the branching rates not on the ages of the particles (which is a well-studied model), but on time (this dependence may model a varying environment for the entire process).  We believe that the methods of our paper could be used to handle more general models
   such as those where, in addition to time dependence, the branching rate may depend on the age of the particles and/or on
   their spatial location if the spatial motion in a bounded domain is allowed. This may be a subject of future work.

   For multi-type processes with constant branching rates, according to classical results (see Chapter 5 of \cite{AN} and references therein), three
   different cases can be distinguished. In the super-critical case, the expectation of the total population size
   grows exponentially, and the total population grows exponentially with positive probability as time goes to infinity.
   In the sub-critical case, the expectation of the total population size decays exponentially, and the population goes extinct with overwhelming probability, i.e., the probability that the population at time $n$ is non-zero decays exponentially in $n$. In the critical case, the population also goes extinct, but the expectation of the total population size remains bounded away from zero and infinity, and the probability of survival decays as $c/n$ for some
   $c > 0$. Moreover, after conditioning on survival,  the size of the population divided by its expectation tends to an exponential random variable. Whether the process is super-critical, sub-critical, or critical, can be easily determined by examining the (constant) branching rates.
   
   The question we address in the case of time-dependent branching rates is how to distinguish between different kinds of asymptotic behavior of the process based on the behavior of the branching rates. Our first result gives a criterion for almost sure extinction of the process in terms of the asymptotic behavior of the branching rates, under mild non-degeneracy assumptions on the branching rates at each time step. In the case of single-type branching processes, a similar result was obtained by Agresti \cite{Agr}. An earlier partial result in this direction (for single-type branching processes) was obtained by Jagers \cite{Jag}, who also provided a sufficient condition for the exponential limit (in distribution) of the size of the population (after conditioning on survival and dividing
   by the expectation of the resulting random variable). 
   For single type branching processes, a necessary and sufficient condition for
   exponential distribution of particle number conditioned on survival
   in terms of the branching rates was obtained independently in \cite{BP, Ker}.
   Our second result gives a necessary and sufficient condition for the existence of such an exponential limit in the case of multi-type branching processes.

   Based on our results, it is natural classify all the branching processes with time-dependent branching rates (under the non-degeneracy assumptions) into three categories, based on their asymptotic behavior. Processes in the first category (which includes super-critical processes with time-independent rates), are distinguished by a positive probability of survival for infinite time. Processes in the second category (which includes critical processes with time-independent rates) go extinct with probability one, and the size of the population, after conditioning on survival and normalization, tends to the exponential limit. Processes in the third category 
	(which includes sub-critical processes
   with time-independent rates) go extinct with probability one, but do not have the exponential limit. 
   
   It should be stressed that, in contrast to the case of time-independent rates (when the expected population size either grows exponentially, decays exponentially, 
   or is asymptotically constant), now the expected population size may fluctuate greatly in each of the cases, which  makes the analysis more complicated. 
   
   Let us also remark that some of the classical results on the asymptotic behavior of branching processes in the time-independent case carry over to the 
   case at hand, while others do not. For example, in the time-independent case, super-critical processes have the property that the process normalized by  expected population size tends to a random limit. An analogue of this statement still holds in the case of time-dependent branching rates, 
   as follows from the results of \cite{Jo}. Further results on $L^p$ and almost sure convergence,
   including those in the case of countably many 
   particle types, can be found in \cite{Big}.
   Sufficient conditions for the continuity of the limiting distribution function were given in \cite{Co}.

	On the other hand, in the time-independent case, a sub-critical process conditioned on survival tends to a random limit. 
        Now, our processes in the third category do not necessarily have this property
        (e.g., the population, conditioned on survival, may grow along a subsequence). A more
        detailed analysis of the near-critical behavior of processes with time-dependent rates will
        be the subject of a subsequent paper. 
   
        In the next section, we introduce the relevant notation and formulate the main results.
        The proofs are presented in Sections~\ref{settt} and~\ref{sec3}.
   In Section~\ref{brct}, we briefly discuss an application of our results to the case of continuous time branching.

   \section{Notation and statement of main results} \label{nore}

Let $S = \{1,...,d\}$ be the set of possible particles types. Suppose that for each
$i \in S$ and $n \geq 0$ there is a distribution $P_n(i,\cdot)$ on $\mathbb{Z}_+^d$. For $a = (a_1,...,a_d) \in \mathbb{Z}_+^d$, 
$P_n(i,a)$ represents the probability that a particle of type $i$ that is alive at time $n$ is replaced in the next generation by $a_1+...+a_d$ particles: $a_1$ particles of type one, $a_2$ particles of type two, etc. 
A $d$-type branching process $\{Z_n\}$ is obtained by starting with a 
positive finite number of particles at time zero, and then replacing each particle of each type $i$, $i \in S$, that is alive at time $n$, $n \geq 0$, by particles of various types according to the distribution $P_n(i, \cdot)$ independently of the other particles alive at time $n$ and of the past, thus obtaining the population at time $n+1$.

We write $Z_n = (Z_n(1),...,Z_n(d))$, where $Z_n(i)$ is the number of particles of type $i$ at time $n$. When the initial population consists of one particle of 
type $j$, we may write $_jZ_n(i)$ to represent the number of particles of type $i$ at time $n$.
Thus $\mathbb{E}(_jZ_n(i))$ means the same as $\mathbb{E}(Z_n(i)\big|Z_0 = e_j)$, where $e_j$ is the unit vector in the $j$-th direction. Let 
$_jX_n$ denote a generic random vector with distribution $P_n(j,\cdot)$.

For $s = (s_1,...,s_d) \in [0,1]^d$, let 
\[
  f_n^j(s) = \mathbb{E}\Big(\prod_{i = 1}^{d} s_i^{Z_n(i)} | Z_0 = e_j\Big),
\]
\[
  g_n^j(s) = \mathbb{E}\Big(\prod_{i = 1}^{d} s_i^{Z_{n+1}(i)} | Z_n = e_j\Big).
\]
At times, we may drop the superscript from either of those expressions, and then $f_n(s)$ and $g_n(s)$ become vectors. Note that
\[
 f_n(s) = f_{n-1}(g_{n-1}(s)) = (g_0 \circ g_1 \circ ... \circ g_{n-1})(s),  \text{ and } f_n(\mathbf{1}) = \mathbf{1}
\]
where $\mathbf{1} = (1,...,1)$. We also define
\[
f_{k,n}(s) = (g_k \circ ... \circ g_{n-1})(s).
\]
Thus $f_{0,n} = f_n$.  We denote
 \[
M_n(j,i) = \frac{\partial f_n^j}{\partial s_i}(\mathbf{1}) = \mathbb{E}(Z_n(i)| Z_0 = e_j),
 \]
\[
 A_n(j,i) = \frac{\partial g_n^j}{\partial s_i}(\mathbf{1}) = \mathbb{E}(Z_{n+1}(i) |Z_n = e_j),
\]
Then,
\[
 M_n = A_0 ....A_{n-1},
\]
where $A_n$ and $M_n$ are viewed as matrices. Also define
\[
M_{k,n} = A_k ... A_{n-1}.
\] 

Let $\| \cdot \|$ denote the following norm of a $d$-dimensional vector: $\| v \| = |v_1| + ... + |v_d|$. 
We will use certain non-degeneracy assumptions on the distribution of descendants at each step. We assume that there are $\varepsilon_0, K_0 > 0$ such that for all $i,j \in S$ the following bounds hold.
\begin{enumerate}
 \item $\mathbb{P}(Z_{n+1}(i) \geq 2 | Z_n = e_j) \geq \varepsilon_0$.
 \item $\mathbb{P}(Z_{n+1} = \mathbf{0} | Z_n = e_j) \geq \varepsilon_0$.
 \item $\mathbb{E}(\|Z_{n+1}\|^2 \big| Z_n = e_j) \leq K_0$.
\end{enumerate}

The following proposition is a generalization of the Perron-Frobenius theorem to the case when the positive matrices forming a product are allowed to be distinct. 

\begin{prop} \label{Propabc} 
Under Assumptions  1 and 3, 
there are two sequences of vectors $v_n, u_n \in \mathbb{R}^d$, $n \geq 0$, such that

(a) $\|u_n\| = \|v_n\| = 1$.

(b) $v_n(i), u_n(i) \geq \bar{\varepsilon}$, for some $\bar{\varepsilon} > 0$ and all $n \geq 0$, $i \in S$,

(c) There are sequences of positive numbers $\lambda _n$ and $\tilde{\lambda}_{n}$ and a positive constant $a$ such that $\lambda_n, \tilde{\lambda}_{n} \in (a^{-1}, a)$ for $n \geq  0$ and
\[
A_{n-1}v_n = \lambda_{n-1} v_{n-1} ,\hspace{1cm} A_{n-1}^{T} u_{n-1} = \tilde{\lambda}_{n-1} u_n. 
\]

(d) For each $\delta > 0$ there is $k' \in \mathbb{N}$ such that 
\[
(1-\delta)v_n \leq \frac{M_{n, n+k} v}{\|M_{n, n+k} v\|} \leq (1+\delta)v_n,~~~~(1-\delta)u_{n+k} \leq \frac{M^T_{n, n+k} u}{\|M^T_{n, n+k} u\|} \leq (1+\delta)u_{n+k}
\]
whenever $k \geq k'$, $v$ and $u$ are non-zero vectors with non-negative components, and inequality between vectors is understood as the inequality 
between their components.

(e) There is $K > 0$ 
such that if we define $\Lambda_n = \prod_{i = 0}^{n-1}\lambda_i$ and $\tilde{\Lambda}_n = \prod_{i = 0}^{n-1}\tilde{\lambda}_i$,
then
\[
\frac{1}{K} \leq \frac{\Lambda_n}{\tilde{\Lambda}_n} \leq K,~~~~~\frac{1}{K} \leq \frac{M_{k,n}(j,i)}{{(\Lambda}_n/\Lambda_k)} \leq K,~~j,i \in S.
\]
\end{prop}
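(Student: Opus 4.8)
The plan is to deduce Proposition~\ref{Propabc} from the Birkhoff--Hilbert theory of products of strictly positive matrices, the point being that Assumptions~1 and~3 force the mean matrices to be uniformly positive and uniformly bounded. Concretely, Assumption~1 gives $A_n(j,i)=\mathbb{E}(Z_{n+1}(i)\,|\,Z_n=e_j)\ge 2\,\mathbb{P}(Z_{n+1}(i)\ge 2\,|\,Z_n=e_j)\ge 2\varepsilon_0$, and Assumption~3 gives $A_n(j,i)\le \mathbb{E}(\|Z_{n+1}\|\,|\,Z_n=e_j)\le \sqrt{K_0}$ by Cauchy--Schwarz, so every entry of every $A_n$ lies in $[c_1,c_2]$ with $c_1=2\varepsilon_0$ and $c_2=\sqrt{K_0}$. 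Hence each $A_n$ maps the nonnegative cone into its interior, the Hilbert-metric diameter of the image being at most $\Delta_0:=\log(c_2^2/c_1^2)$ uniformly in $n$; by the Birkhoff contraction theorem each $A_n$ contracts the Hilbert projective metric $d_H$ with a ratio $\tau:=\tanh(\Delta_0/4)<1$ that is uniform in $n$, and since contraction ratios are submultiplicative, $M_{k,n}=A_k\cdots A_{n-1}$ contracts by $\tau^{\,n-k}$.

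I would first build the forward vectors $v_n$. Fixing a strictly positive $w$ and writing $M_{n,N}w=M_{n,n+k}\big(M_{n+k,N}w\big)$ for $N>n+k$, one sees that $\{M_{n,N}w\}_N$ is $d_H$-Cauchy with $d_H(M_{n,N}w,M_{n,N'}w)\le \tau^{k}\Delta_0$ (both inner vectors lie in the image of $A_{n+k}$, of diameter $\le\Delta_0$, and $M_{n,n+k}$ contracts by $\tau^{k}$), so its normalized limit defines $v_n$ with $\|v_n\|=1$; the limit is independent of $w$, which yields the $v$-half of (d) once one observes that $d_H$-closeness of unit $\ell^1$ vectors translates into the stated componentwise $(1\pm\delta)$ inequalities. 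Passing $A_{n-1}$ through the limit and using $A_{n-1}M_{n,N}=M_{n-1,N}$ gives $A_{n-1}v_n=\lambda_{n-1}v_{n-1}$ with $\lambda_{n-1}:=\|A_{n-1}v_n\|$. The backward vectors go the other way: fix a unit positive $u_0$ and \emph{define} $u_n$ as the normalization of $M_{0,n}^{T}u_0$, which makes $A_{n-1}^{T}u_{n-1}=\tilde\lambda_{n-1}u_n$ automatic with $\tilde\lambda_{n-1}:=\|A_{n-1}^{T}u_{n-1}\|$. The $u$-half of (d) follows because $M_{n,n+k}^{T}u$ and $u_{n+k}$ are normalizations of images of positive vectors under the \emph{same} contraction $M_{n,n+k}^{T}$ (indeed $M_{0,n+k}^{T}=M_{n,n+k}^{T}M_{0,n}^{T}$), hence $d_H$-close at the geometric rate $\tau^{k}$. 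Items (a)--(c) are then immediate: $v_n$ is proportional to $A_nv_{n+1}$ and $u_n$ to $A_{n-1}^{T}u_{n-1}$, and since the matrices have entries in $[c_1,c_2]$ every pre-normalization coordinate lies in $[c_1,c_2]$ and the $\ell^1$ norm in $[dc_1,dc_2]$; this gives $\bar\varepsilon=c_1/(dc_2)$ in (b) and $\lambda_n,\tilde\lambda_n\in[dc_1,dc_2]$ in (c).

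The substance is item (e). The two Perron relations telescope to $M_{k,n}v_n=(\Lambda_n/\Lambda_k)v_k$ and $M_{k,n}^{T}u_k=(\tilde\Lambda_n/\tilde\Lambda_k)u_n$, and pairing the first against $u_k$ and the second against $v_n$ gives the identity
\[
\frac{\Lambda_n/\Lambda_k}{\tilde\Lambda_n/\tilde\Lambda_k}=\frac{\langle u_n,v_n\rangle}{\langle u_k,v_k\rangle}.
\]
Each $u_m,v_m$ is a unit $\ell^1$ vector with all coordinates $\ge\bar\varepsilon$, so $\langle u_m,v_m\rangle\in[\bar\varepsilon,1]$; taking $k=0$ (where $\Lambda_0=\tilde\Lambda_0=1$) yields $\Lambda_n/\tilde\Lambda_n\in[\bar\varepsilon,\bar\varepsilon^{-1}]$, the first bound in (e). For the entrywise bound I would use uniform positivity a second time: factoring $M_{k,n}=A_kM_{k+1,n}$ shows $M_{k,n}(j,i)$ is comparable, uniformly in $j$, to the $i$-th column sum of $M_{k+1,n}$; factoring once more on the right shows these column sums are mutually comparable in $i$; hence all entries of $M_{k,n}$ are comparable to one quantity, which $M_{k,n}v_n=(\Lambda_n/\Lambda_k)v_k$ (with $v_n,v_k$ having coordinates of order one) identifies, up to uniform constants, with $\Lambda_n/\Lambda_k$. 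This gives $1/K\le M_{k,n}(j,i)/(\Lambda_n/\Lambda_k)\le K$ and completes (e).

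I expect the two genuinely substantive points to be the \emph{uniformity} of the Birkhoff contraction (that neither $\tau<1$ nor the diameter $\Delta_0$ depends on $n$ --- this is exactly where Assumptions~1 and~3 enter) and the entrywise comparison in (e), where one must upgrade the Perron relation, which by itself controls only the weighted row sums $\sum_i M_{k,n}(j,i)v_n(i)$, to two-sided control of every individual entry, via the column comparability coming from uniform positivity. Once the uniform contraction is established, the construction of $v_n,u_n$ and the verification of (a)--(d) are routine, and the positivity hypotheses re-enter only in (e).
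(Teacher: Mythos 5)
Your proposal is correct and follows essentially the same route as the paper's proof in Appendix~A: uniform entrywise bounds on $A_n$ derived from Assumptions 1 and 3, Birkhoff contraction of the Hilbert projective metric to construct $v_n$ as forward limits and to obtain (d), backward normalized iterates $M_n^T u_0$ to define $u_n$, the duality pairing giving $\Lambda_n/\tilde\Lambda_n = \langle u_n,v_n\rangle/\langle u_0,v_0\rangle$ for the first half of (e), and uniform positivity to upgrade the Perron relation $M_{k,n}v_n=(\Lambda_n/\Lambda_k)v_k$ to the entrywise bounds in the second half of (e). The only cosmetic differences are that the paper builds $v_n$ via nested compact sets rather than a Cauchy-sequence argument, and proves the entrywise bound by sandwiching $A_{n-1}e_j$ between multiples of $v_{n-1}$ and applying $M_{k,n-1}$, rather than by your column-sum comparability argument.
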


This proposition can be derived from the results of Chapter 3 of \cite{Sen}, for example. 
 Indeed, from our Assumptions 1-3, it follows
that the matrices $A_n$ have the Birkhoff's contraction coefficient (in the terminology of \cite{Sen}) 
uniformly bounded away from one. This implies that the conditions of
Lemma 3.4. of \cite{Sen} are met (which, in particular, implies that the family $M_{k,n}$ is weakly ergodic (see \cite{Sen}). This lemma and Exercise 3.5 of \cite{Sen}
easily imply the existence of vectors $u_n$ and $v_n$. Their required properties are also not difficult to establish.
For the sake of completeness we provide an independent  
proof in Appendix~\ref{AppPF}.

\begin{rem} The vectors $v_n$ and the numbers $\lambda_n$ are uniquely defined by the above conditions, 
  as seen from the proof of the Proposition. The vectors $u_n$ and the numbers
  $\tilde{\lambda}_n$ will be defined uniquely by specifying $u_0$, which we assume to be fixed as an arbitrary vector satisfying conditions (a) and (b).
\end{rem}

The probabilistic meaning of vectors $u_n$ and $v_n$ is the following. The vector $u_n$ gives the asymptotic proportions 
of different particles in the population provided that $Z_n$ is large 
(see \eqref{colli}, \eqref{colli2} 
for the precise statement).
To see the meaning of $v_n$, consider  the total number of particles at time $n$,
$z_n^*=\langle Z_n, \mathbf{1} \rangle.$ It will be apparent from the proof of Proposition \ref{Propabc} 
that 
$$ \lim_{N\to\infty} \frac{\EXP(z_N^*|Z_n=u')}{\EXP(z_N^*|Z_n=u'')}=
\frac{\langle v_n, u'\rangle}{\langle v_n, u''\rangle}
$$
for each $u', u''\in \mathbb{Z}^d$ and each $n\in \mathbb{Z}_+$.
Thus $v_n$ controls the expected future size of the population.

Our first result gives a necessary and sufficient condition for the almost sure extinction of $\{Z_n\}$. 
\begin{thm} \label{yth}
Under Assumptions 1-3,  if extinction  of the process $\{Z_n\}$ occurs with probability one for some initial population, then
$\sum_{k=1}^{\infty}({1}/{\Lambda}_{k}) = \infty$. If $\sum_{k=1}^{\infty}({1}/{\Lambda}_{k}) = \infty$, then extinction with probability one occurs for every initial population. 
\end{thm}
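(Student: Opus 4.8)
The plan is to analyze the survival probability directly through the generating functions. For $0\le k\le n$ set $w_{k,n}=\mathbf 1-f_{k,n}(\mathbf 0)$, whose $j$-th component $w_{k,n}(j)=\PROB(Z_n\neq\mathbf 0\mid Z_k=e_j)$ is the probability of survival up to time $n$ started from a single type-$j$ particle at time $k$. Since $f_{k,n}=g_k\circ f_{k+1,n}$ and $f_{n,n}=\mathrm{id}$, these vectors obey the backward recursion
\[
w_{k,n}=\mathbf 1-g_k(\mathbf 1-w_{k+1,n}),\qquad w_{n,n}=\mathbf 1 .
\]
Almost sure extinction from initial type $j$ is equivalent to $w_{0,n}(j)\to 0$, and by independence of descendants of the initial particles it suffices to treat a single initial particle. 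Taylor expanding $g_k^j$ about $\mathbf 1$, with $g_k^j(\mathbf 1)=1$ and $\nabla g_k^j(\mathbf 1)=A_k(j,\cdot)$, gives
\[
w_{k,n}=A_k\,w_{k+1,n}-R_k(w_{k+1,n}),
\]
where $R_k(w)$ collects the nonnegative second-order remainders $R_k^j(w)=\tfrac12\,w^{T}H_k^j(\xi)\,w\ge 0$ (nonnegative by convexity of $g_k^j$). Assumption 3 bounds the entries of the Hessians $H_k^j$, so $0\le R_k^j(w)\le C\|w\|^2$; Assumption 1, through $\partial^2 g_k^j/\partial s_i^2(\mathbf 1)=\EXP\big(Z_{k+1}(i)(Z_{k+1}(i)-1)\mid Z_k=e_j\big)\ge 2\eps_0$, furnishes the matching lower bound $R_k^j(w)\ge c\|w\|^2$ as long as $\|w\|$ is small enough that $\xi\in[\mathbf 1-w,\mathbf 1]$ keeps the second derivatives close to their values at $\mathbf 1$.

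Next I would reduce this vector recursion to a scalar one. First, the components of $w_{k,n}$ are mutually comparable uniformly in $k,n$: dropping the remainder gives the upper bound $w_{k,n}\le A_k w_{k+1,n}$, while retaining it over a bounded number of steps gives a matching lower bound, so with Proposition~\ref{Propabc}(d),(e) one gets $w_{k,n}(i)\asymp w_{k,n}(l)$ for all $i,l\in S$. Projecting onto the left Perron vector $u_k$ and using $A_k^{T}u_k=\tlambda_k u_{k+1}$, put $\phi_{k,n}=\langle u_k,w_{k,n}\rangle$; then
\[
\phi_{k,n}=\tlambda_k\,\phi_{k+1,n}-\langle u_k,R_k(w_{k+1,n})\rangle .
\]
By the comparability of components together with Proposition~\ref{Propabc}(b) one has $\|w_{k+1,n}\|\asymp\phi_{k+1,n}$ and $\langle u_k,R_k(w_{k+1,n})\rangle\asymp\|w_{k+1,n}\|^2\asymp\phi_{k+1,n}^2$, whence
\[
\phi_{k,n}=\tlambda_k\,\phi_{k+1,n}-c_{k,n}\,\phi_{k+1,n}^2,\qquad \phi_{n,n}=\langle u_n,\mathbf 1\rangle\asymp 1,
\]
with coefficients $c_{k,n}$ bounded above and below by positive constants independent of $k,n$ (the finitely many initial steps, where $\phi_{k+1,n}$ is of order one, alter $\phi$ only by a bounded factor).

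Finally I would integrate the scalar recursion by passing to reciprocals. Writing $y_{k,n}=1/\phi_{k,n}$ and expanding,
\[
y_{k,n}=\frac{1}{\tlambda_k}\,y_{k+1,n}+\frac{c_{k,n}}{\tlambda_k^{2}}\big(1+O(\phi_{k+1,n})\big),
\]
and telescoping from $k=n$ down to $k=0$ with the weights $1/\tlambda_i$ yields
\[
\frac{1}{\phi_{0,n}}=\frac{y_{n,n}}{\tLambda_n}+\sum_{k=0}^{n-1}\frac{c_{k,n}}{\tLambda_k\,\tlambda_k^{2}}\big(1+o(1)\big).
\]
Since $\tlambda_k\in(a^{-1},a)$ and $c_{k,n}\asymp 1$, each summand is comparable to $1/\tLambda_k$, and by Proposition~\ref{Propabc}(e) $\tLambda_k\asymp\Lambda_k$, so $\sum_k 1/\tLambda_k$ converges or diverges together with $\sum_k 1/\Lambda_k$. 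If $\sum_k 1/\Lambda_k=\infty$ then $1/\phi_{0,n}\to\infty$, hence $w_{0,n}(j)\asymp\phi_{0,n}\to 0$ for every $j$, giving almost sure extinction for every initial population. If $\sum_k 1/\Lambda_k<\infty$ then $\Lambda_n\to\infty$, so $1/\tLambda_n\to 0$ and $1/\phi_{0,n}$ converges to a finite limit; thus $w_{0,n}(j)$ stays bounded away from $0$, survival has positive probability, and extinction is not almost sure. The contrapositive of the latter is precisely the first assertion of the theorem.

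The main obstacle is the reduction of the vector recursion to the scalar one with two-sided control: one must show that the survival probabilities of the different types remain comparable and, crucially, that the quadratic correction is genuinely nondegenerate (bounded below, not merely above), which is exactly where Assumption 1 enters; one must also handle uniformly the boundary layer near $k=n$, where $\phi_{k,n}$ is of order one and the second-order Taylor expansion is not yet accurate. Once the recursion $\phi_{k,n}=\tlambda_k\phi_{k+1,n}-c_{k,n}\phi_{k+1,n}^2$ with $c_{k,n}\asymp 1$ is in hand, the reciprocal/telescoping step and the comparison $\sum_k 1/\tLambda_k\asymp\sum_k 1/\Lambda_k$ are routine.
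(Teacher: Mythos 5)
Your plan is, in substance, the paper's own proof of the extinction direction: with $s=\mathbf{0}$, your telescoped identity for $1/\phi_{0,n}$ is exactly the paper's \eqref{eq1z}, and the two-sided control of its summands is precisely the content of Lemma~\ref{lemma1}. The one structural difference is that the paper proves the survival direction separately, via the $L^2$-bounded martingale $z_n/\Lambda_n$ with $z_n=\langle Z_n,v_n\rangle$ and the covariance recursion $D_{n+1}=A_n^TD_nA_n+S_n$, whereas you extract both directions from the same identity. That part of your plan is legitimate: for survival you only need the \emph{upper} bound on the quadratic remainder (Assumption 3 plus monotonicity of the Hessian in $s$) and the ratio bound $\phi_{k+1,n}\leq C\phi_{k,n}$, which does follow from the one-step estimate $w_{k,n}(j)\ge \PROB(Z_{k+1}(i)\ge 1\mid Z_k=e_j)\,w_{k+1,n}(i)\ge\eps_0\, w_{k+1,n}(i)$ together with $w_{k,n}\le A_k w_{k+1,n}$.

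The genuine gap is in the extinction direction, exactly at the point you flag as ``the main obstacle,'' and it is not routine. Your lower bound $R_k^j(w)\ge c\|w\|^2$ is justified only when $\|w_{k+1,n}\|$ is small, so your telescoped sum is bounded below only by $\sum 1/\tLambda_k$ restricted to the set of $k$ with $\|w_{k+1,n}\|\le\delta$. Identifying that set a priori is circular: before the theorem is proved, the only unconditional upper bound on $\|w_{k+1,n}\|$ is the expectation bound $\|w_{k+1,n}\|\le C\Lambda_n/\Lambda_{k+1}$, which is vacuous in the most important (critical-type) case $\Lambda_k\asymp 1$, where \emph{no} $k\le n$ is certified to lie in your ``bulk'' --- indeed, ruling out that $\phi_{k,n}$ stays of order one for all $k$ is precisely what has to be proved. (Calling it a ``boundary layer near $k=n$'' already presupposes the conclusion.) The missing idea --- and the reason Assumption 2 appears in the hypotheses, which your proposal never uses --- is that Assumption 2 eliminates this issue entirely: since $f^i_{k+1,n}(\mathbf{0})=\PROB(Z_n=\mathbf{0}\mid Z_{k+1}=e_i)\ge\PROB(Z_{k+2}=\mathbf{0}\mid Z_{k+1}=e_i)\ge\eps_0$ for $k+1\le n-1$, the intermediate points $\xi$ in your Taylor remainder satisfy $\xi\ge\eps_0\mathbf{1}$ componentwise for every $k\le n-2$, regardless of whether $\|w_{k+1,n}\|$ is small. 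Combined with a truncation via Chebyshev (Assumption 3 gives $N$ with $\PROB(Z_{k+1}(i)\ge 2,\ \|Z_{k+1}\|\le N\mid Z_k=e_j)\ge\eps_0/2$), this yields $Hg_k^j(\xi)_{ii}\ge \eps_0^{N+1}$ uniformly in $k$, hence $c_{k,n}\ge c>0$ for all $k\le n-2$; the last couple of terms are simply dropped, being nonnegative, and the sum is genuinely comparable to $\Xi_n$. This is how the paper's Lemma~\ref{lemma1} works. A secondary imprecision: your statement that small $\|w\|$ keeps the Hessians ``close to their values at $\mathbf{1}$'' uniformly in $k$ would require equicontinuity, i.e.\ uniform integrability (the paper's Assumption 4, which it invokes only later, in the proof of Theorem~\ref{the2}); under Assumptions 1--3 alone what one can and should prove is the matching-order lower bound via the truncation argument above.
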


\begin{rem} Here and below, when we talk about initial population, we mean
that $Z_0=u$ for some deterministic vector $u.$ 
\end{rem}

\begin{rem} The first statement of the theorem can be deduced from the results of \cite{Jo}.  In fact, the assumptions needed for the first
part are weaker than our assumptions above. For example, weak ergodicity (see \cite{Jo}) is sufficient. However, the assumption that the matricies $A_k$
are uniformly bounded from below plays an important role in our proof of the second statement, as well as in the proof of Theorem~\ref{the2} below. 
We note that finding the least restrictive conditions for the validity of Theorems~\ref{yth} and \ref{the2} remains an interesting open problem. We refer
the reader to the paper \cite{Ker} for recent results in the case of single-type branching processes. 
\end{rem} 

 The following lemma easily follows from Theorem~\ref{yth}. 
\begin{lem} \label{ljj3}
	Suppose Assumptions 1-3 hold.
	
(a) Given $l\geq 0$, consider the process $\{_j Z'_n\}$ that starts with one particle of type $j$ alive at time 
$l$ followed by branching with the distributions  
$P_{l}, P_{l+1}, \dots$ (where the distributions $P_i$ are used in the definition of the branching process in the beginning of the section).
Extinction for this process occurs with probability one if and only 
if $\sum_{k=1}^{\infty}({1}/{\Lambda}_{k}) = \infty$.  

(b) Given $l>0$, the extinction  of $\{Z_n\}$ (or, equivalently, $\{_j Z'_n\})$  occurs with probability one if and only if
$\DS \sum_{k=1}^\infty \frac{1}{\Lambda_{lk}}=\infty.$
\end{lem}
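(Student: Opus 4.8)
The plan is to reduce both parts to Theorem~\ref{yth} by viewing each process as a bona fide $d$-type branching process with time-dependent rates and identifying its associated product sequence $\Lambda$.

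For part (a), the process $\{_j Z'_n\}$ is a branching process governed by the shifted branching distributions $P_l, P_{l+1}, \dots$, i.e.\ by the matrices $A'_m := A_{l+m}$, $m \geq 0$. Since Assumptions 1--3 hold uniformly in the time step, they also hold for this shifted family, so Proposition~\ref{Propabc} applies and yields vectors $v'_n$ and numbers $\lambda'_n$ with $\Lambda'_n = \prod_{i=0}^{n-1}\lambda'_i$. I would then check that the choice $v'_n := v_{l+n}$, $\lambda'_n := \lambda_{l+n}$ satisfies all the defining relations (a)--(e) for the shifted family; indeed the key identity $A'_{n-1}v'_n = \lambda'_{n-1}v'_{n-1}$ is nothing but $A_{l+n-1}v_{l+n} = \lambda_{l+n-1}v_{l+n-1}$. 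By the uniqueness recorded in the Remark following Proposition~\ref{Propabc}, these are the data of the shifted process, whence $\Lambda'_n = \Lambda_{l+n}/\Lambda_l$. Theorem~\ref{yth} applied to $\{_j Z'_n\}$ then gives almost-sure extinction if and only if $\sum_{n\geq 1}1/\Lambda'_n = \infty$, i.e.\ if and only if $\Lambda_l\sum_{k\geq l+1}1/\Lambda_k = \infty$. As $\Lambda_l$ is a fixed positive number and the omitted terms $1/\Lambda_1,\dots,1/\Lambda_l$ are finite, this is equivalent to $\sum_{k\geq 1}1/\Lambda_k = \infty$, which proves (a).

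For part (b), taking $l=0$ in (a) (equivalently, Theorem~\ref{yth} directly) shows that almost-sure extinction of $\{Z_n\}$ is equivalent to $\sum_{k\geq 1}1/\Lambda_k=\infty$, and part (a) shows that $\{Z_n\}$ and $\{_j Z'_n\}$ go extinct together. It therefore remains only to establish the purely arithmetic equivalence
\[
\sum_{k=1}^\infty \frac{1}{\Lambda_k} = \infty \iff \sum_{k=1}^\infty \frac{1}{\Lambda_{lk}} = \infty.
\]
For this I would use that consecutive ratios satisfy $\Lambda_{m+1}/\Lambda_m = \lambda_m \in (a^{-1},a)$ by Proposition~\ref{Propabc}(c). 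Hence, within each block $lk \leq m < l(k+1)$, one has $\Lambda_m/\Lambda_{lk} = \prod_{i=lk}^{m-1}\lambda_i \in [a^{-l},a^l]$, so $a^{-l}/\Lambda_{lk} \leq 1/\Lambda_m \leq a^l/\Lambda_{lk}$, and summing the $l$ terms of the block gives
\[
a^{-l}\,\frac{l}{\Lambda_{lk}} \;\leq\; \sum_{m=lk}^{l(k+1)-1}\frac{1}{\Lambda_m} \;\leq\; a^l\,\frac{l}{\Lambda_{lk}}.
\]
Summing over $k$ (and discarding the finitely many terms with $m<l$) shows that $\sum_m 1/\Lambda_m$ and $\sum_k 1/\Lambda_{lk}$ diverge or converge simultaneously, completing (b).

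\textbf{Main obstacle.} There is no deep difficulty here; both parts are reductions to Theorem~\ref{yth}. The one point requiring care is the identification $\Lambda'_n = \Lambda_{l+n}/\Lambda_l$ for the time-shifted process in part (a), where I rely on the uniqueness of the Perron--Frobenius data. Alternatively, the same comparison up to multiplicative constants, which is all that the divergence question is sensitive to, follows from the two-sided bound $M_{k,n}(j,i) \asymp \Lambda_n/\Lambda_k$ of Proposition~\ref{Propabc}(e) applied to both the original and the shifted families. The block estimate in part (b) is elementary given the uniform bounds $\lambda_m \in (a^{-1},a)$.
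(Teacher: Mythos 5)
Your proof is correct and follows essentially the same route as the paper: part (a) reduces to Theorem~\ref{yth} for the shifted process after identifying its product sequence as $\Lambda_{l+n}/\Lambda_l$ (the paper does this implicitly, you via the uniqueness remark), and part (b) is the same block comparison $\Lambda_n \asymp \Lambda_{lk}$ for $lk \leq n < l(k+1)$ coming from the uniform bounds on the $\lambda_m$. No gaps; you have merely spelled out details the paper leaves to the reader.
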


\begin{rem}
	The divergence of $\sum_{k=1}^\infty \frac{1}{\Lambda_{lk}}$ is the extinction condition for the process
	$\{Z_{ln}\}$ obtained by observing our process only at the moments of time that are multiples of $l.$ 
\end{rem}  

\begin{proof}[Proof of Lemma \ref{ljj3}] For part (a)  it suffices to note that $\DS \sum_{k=1}^{\infty} \frac{1}{\Lambda_k} = \infty$ if and only if $\DS \sum_{k=l+1}^{\infty} \frac{\Lambda_{l}}{\Lambda_k} = \infty$,
while the latter is equivalent to the almost sure extinction of the process $\{_j Z'_n\}$ by Theorem~\ref{yth}. 

To prove part (b), we observe that, under Assumption 3, there exists a constant $C$ such that for each
$k$ and each $lk\leq n<l(k+1)$ we have
$\DS \frac{\Lambda_{lk}}{C} \leq \Lambda_n \leq C\Lambda_{lk}. $
\end{proof}

The following lemma will be derived in the end of the next section  using the formulas encountered in the proof of Theorem~\ref{yth}. 
\begin{lem} \label{elesst} Under Assumptions 1-3, for each initial population of the branching process, there is a constant $C > 0$ such that
\begin{equation} \label{qwq1}
\frac{\Lambda_n}{C}  \leq  \mathbb{E} \| Z_n \| \leq C  \Lambda_n,~~n \geq 1,
\end{equation} 
\begin{equation} \label{qwq2}
\frac{1}{C} \left(\sum_{k=1}^{n} \frac{1}{\Lambda}_{k}\right)^{-1} \leq \mathbb{P} (Z_n \neq \mathbf{0}) 
\leq C \left(\sum_{k=1}^{n} \frac{1}{\Lambda}_{k}\right)^{-1},~~n \geq 1.
\end{equation}
\end{lem}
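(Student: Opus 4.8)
The plan is to establish the two displays by separate arguments, using Proposition~\ref{Propabc} throughout. Inequality \eqref{qwq1} is essentially immediate: writing the initial population as $Z_0=u\in\mathbb{Z}_+^d$, one has $\mathbb{E}\|Z_n\|=\sum_{j}u_j\sum_i M_n(j,i)$, and since $M_n=M_{0,n}$ and $\Lambda_0=1$, part~(e) of the Proposition gives $K^{-1}\Lambda_n\le M_n(j,i)\le K\Lambda_n$ for every $i,j$. Summing over $i$ and over $j$ with weights $u_j$ yields \eqref{qwq1} with a constant depending on $K$, $d$, and $\|u\|$. For \eqref{qwq2} I would first treat a single ancestor and then pass to general $u$.

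For the single-ancestor survival probability I run the extinction-probability recursion backward in time. Fix $n$, put $\mathbf{h}_k=\mathbf{1}-f_{k,n}(\mathbf{0})$, so that $\mathbf{h}_n=\mathbf{1}$, the $j$-th component $h_k^j$ is the probability that one type-$j$ particle present at time $k$ still has descendants at time $n$, and $\mathbf{h}_0$ is the vector we must estimate. From $f_{k,n}=g_k\circ f_{k+1,n}$ and a second-order expansion of $g_k$ at $\mathbf{1}$ one obtains
\[
\mathbf{h}_k=A_k\mathbf{h}_{k+1}-\frac12 R_k(\mathbf{h}_{k+1}),
\]
where $R_k$ is the quadratic remainder assembled from the second derivatives of $g_k$. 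Assumption~3 bounds these derivatives from above, giving $R_k^j(\mathbf h)\le K_0\|\mathbf h\|^2$, while Assumption~1 (each type produces at least two offspring of any prescribed type with probability $\ge\varepsilon_0$) yields, by a direct convexity estimate, a matching lower bound $R_k^j(\mathbf h)\ge c\|\mathbf h\|^2$ as long as $\|\mathbf h\|$ stays bounded. The decisive step is to pair the recursion with the \emph{left} eigenvector $u_k$: setting $y_k=\langle u_k,\mathbf h_k\rangle$ and using $A_k^{T}u_k=\tilde\lambda_k u_{k+1}$ collapses the linear term, so that
\[
y_k=\tilde\lambda_k\,y_{k+1}-\frac12\beta_k,\qquad \beta_k=\langle u_k,R_k(\mathbf h_{k+1})\rangle\asymp \|\mathbf h_{k+1}\|^2\asymp y_{k+1}^2 ,
\]
where part~(b) gives $\bar\varepsilon\|\mathbf h_k\|\le y_k\le\|\mathbf h_k\|$.

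Now I linearize by passing to reciprocals. With $w_k=1/y_k$, the relation $y_k=\tilde\lambda_k y_{k+1}(1-r_k)$, $r_k=\beta_k/(2\tilde\lambda_k y_{k+1})$, becomes $w_k=\tilde\lambda_k^{-1}w_{k+1}+\gamma_k$ with $\gamma_k=\beta_k/(2\tilde\lambda_k y_{k+1}y_k)$. Telescoping against the integrating factor $\phi_k=\tilde\Lambda_n/\tilde\Lambda_k$ (which satisfies $\phi_k=\tilde\lambda_k\phi_{k+1}$, $\phi_n=1$) and using $y_n=\langle u_n,\mathbf 1\rangle=1$ gives the exact identity
\[
w_0=\frac{1}{\tilde\Lambda_n}+\sum_{k=0}^{n-1}\frac{\gamma_k}{\tilde\Lambda_k}.
\]
Since $\beta_k\ge c\,y_{k+1}^2$ always, $\gamma_k$ is bounded below by a positive constant, whence $w_0\ge c'\sum_{k=0}^{n-1}\tilde\Lambda_k^{-1}$; because consecutive $\tilde\Lambda_k$ differ by the bounded factor $\tilde\lambda_k\in(a^{-1},a)$ and $\tilde\Lambda_k\asymp\Lambda_k$ by part~(e), this sum is comparable to $\sum_{k=1}^{n}\Lambda_k^{-1}$, and $y_0=1/w_0\le C(\sum_{k=1}^n\Lambda_k^{-1})^{-1}$. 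As $\mathbb{P}(Z_n\neq\mathbf 0\mid Z_0=e_j)=h_0^j\asymp y_0$, this is the upper bound in \eqref{qwq2} for one ancestor. Passing to general $u$ is a sandwich: by independence $\mathbb{P}(Z_n\neq\mathbf 0\mid Z_0=u)$ lies between $\max_j h_0^j$ and $\sum_j u_j h_0^j$, and the $h_0^j$ are mutually comparable (all $\asymp y_0$, since Assumption~1 lets any two types communicate in one step), so both bounds persist with a constant depending on $\|u\|$ and $d$.

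The matching lower bound for the survival probability is the main obstacle, and it amounts to the reverse estimate $\gamma_k\le C$, i.e. $y_{k+1}/y_k\le C$. This is clean while $y_{k+1}$ is below a fixed threshold (then $y_k\ge\frac12\tilde\lambda_k y_{k+1}$ from the recursion), but it can fail on stretches where the process is locally supercritical and $y_{k+1}$ is of order one with $r_k$ close to $1$ — precisely the fluctuation phenomenon emphasized in the introduction. I would control these steps by a thresholding argument, separating the bulk (small $y_{k+1}$, where the two-sided bound on $\gamma_k$ holds outright) from the finitely many per-scale transition steps, whose contribution to $\sum_k\gamma_k/\tilde\Lambda_k$ I bound against $\sum_k\tilde\Lambda_k^{-1}$ using that $y_k\asymp 1$ there and that $\tilde\Lambda_k^{-1}$ decays geometrically along a supercritical run. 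An independent and more robust route to the lower bound, which sidesteps the recursion entirely, is the second-moment method: a variance computation using part~(e) gives $\mathbb{E}\|Z_n\|^2\asymp\Lambda_n^2\sum_{k=1}^n\Lambda_k^{-1}$, and the Paley--Zygmund inequality $\mathbb{P}(Z_n\neq\mathbf 0)\ge(\mathbb{E}\|Z_n\|)^2/\mathbb{E}\|Z_n\|^2$ then delivers \eqref{qwq2} from below via \eqref{qwq1}.
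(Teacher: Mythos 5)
Your proof of \eqref{qwq1} and of the \emph{upper} bound in \eqref{qwq2} is essentially the paper's own argument: your telescoped identity
$w_0=\tilde\Lambda_n^{-1}+\sum_{k=0}^{n-1}\gamma_k/\tilde\Lambda_k$
is exactly the paper's identity \eqref{eq1z} evaluated at $s=\mathbf{0}$ (your $\gamma_k/\tilde\Lambda_k$ coincides term by term with the summands in \eqref{lll}), and the termwise lower bound $\gamma_k\geq c$ is the lower half of the paper's Lemma~\ref{lemma1}. One correction there: the lower bound $\beta_k\geq c\,y_{k+1}^2$ on the quadratic remainder does \emph{not} follow from Assumption~1 plus ``$\|\mathbf{h}\|$ bounded.'' The Hessian $Hg_k$ is evaluated at the intermediate point $\eta_{k+1,n}$, and its entries can degenerate to zero if some component of $\eta_{k+1,n}$ approaches $0$. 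What saves you is Assumption~2: $\eta_{k+1,n}\geq f_{k,n}(\mathbf{0})\geq \varepsilon_0\mathbf{1}$ componentwise, since $f^j_{k,n}(\mathbf{0})=\mathbb{P}(Z_n=\mathbf{0}\,|\,Z_k=e_j)\geq\mathbb{P}(Z_{k+1}=\mathbf{0}\,|\,Z_k=e_j)\geq\varepsilon_0$. This is precisely how the paper gets its two-sided Hessian bound, and it is the only place Assumption~2 enters.

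For the \emph{lower} bound in \eqref{qwq2} (i.e., the upper bound $\gamma_k\leq C$), your primary plan has a genuine gap: the claims that only ``finitely many per-scale transition steps'' occur and that $\tilde\Lambda_k^{-1}$ ``decays geometrically along a supercritical run'' are unjustified, since the $\lambda_k$ may fluctuate arbitrarily within $(a^{-1},a)$ and there is no control on the number or placement of such stretches. Moreover, the difficulty you anticipate does not actually arise: the paper's Lemma~\ref{lemma1} (Case~II) shows the per-step ratio bound $\|\mathbf{1}-f_{k+1,n}(s)\|\leq c_2\|\mathbf{1}-f_{k,n}(s)\|$ holds at \emph{every} $k$, with no run decomposition, via the elementary generating-function bound $g_k^j(s)\leq(1-\varepsilon_0)+\varepsilon_0 s_j^2$: if a type-$j$ particle at time $k+1$ has survival probability bounded below, then so does a type-$j$ particle at time $k$, because with probability $\geq\varepsilon_0$ it has at least two type-$j$ children. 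That said, your fallback route is correct and genuinely different from the paper: the covariance computation $D_n=\sum_{k=0}^{n-1}M_{k+1,n}^TS_kM_{k+1,n}$ (which the paper carries out in Part~I of the proof of Theorem~\ref{yth}) gives $\mathbb{E}\|Z_n\|^2\leq C\Lambda_n^2\sum_{k=1}^n\Lambda_k^{-1}$, and then $\mathbb{P}(Z_n\neq\mathbf{0})\geq(\mathbb{E}\|Z_n\|)^2/\mathbb{E}\|Z_n\|^2$ together with \eqref{qwq1} yields the lower bound in \eqref{qwq2}. (Note only the upper second-moment bound is needed; your claimed two-sided $\asymp$ would require a variance lower bound you never use.) The trade-off: the Paley--Zygmund route is shorter and bypasses the delicate Case~II analysis, but it only treats $s=\mathbf{0}$, whereas the paper's Lemma~\ref{lemma1} is uniform in $s\in[0,1]^d\setminus\{\mathbf{1}\}$, a strengthening that is reused later in the proof of Theorem~\ref{the2}; so your route suffices for this lemma but does not replace Lemma~\ref{lemma1} in the rest of the paper.
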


To formulate the next theorem, we will make use of the following assumptions:
\\

4. The random variables $\|_jX_n\|^2$, $j \in S$, $n \geq 0$,  are uniformly integrable. 
\\

5. $\mathbb{P} (Z_n \neq \mathbf{0}) \rightarrow 0$ as $n \rightarrow \infty$ (equivalently, $\sum_{k=1}^{n}({1}/{\Lambda}_{k}) \rightarrow \infty$, by (\ref{qwq2})). 
\\

6. $\mathbb{E} \| Z_n \| /\mathbb{P} (Z_n \neq \mathbf{0}) \rightarrow \infty $ as $n \rightarrow \infty$ (equivalently, ${{\Lambda}_{n}}\sum_{k=1}^{n}({1}/{\Lambda}_{k}) \rightarrow \infty$, by (\ref{qwq1}), (\ref{qwq2})).
\\

Let $\zeta_n = (\zeta_n(1),...,\zeta_n(d))$ be the random vector obtained from $Z_n$ by conditioning on the event that $Z_n \neq \mathbf{0}$.
In other words, we treat the event $Z_n \neq \mathbf{0}$ as a new probability space, with the measure $ \mathbb{P}'$ obtained from the underlying  measure $\mathbb{P}$ via $\mathbb{P}'(A) = \mathbb{P}(A)/ \mathbb{P}(Z_n \neq \mathbf{0})$.  When we write $_j\zeta_n$, we mean that the initial population for the branching process is specified as $e_j$.

We will prove exponential limit for the multi-type random variable under the assumptions listed above.
\begin{thm} \label{the2}
Under Assumptions 1-6, for each initial population of the branching process and each vector $u$ with positive components,  we have the following limit in distribution
\begin{equation} \label{maineq}
\frac{\langle \zeta_n,u \rangle}{  \mathbb{E} \langle \zeta_n,u \rangle } \rightarrow \xi, ~~{\it as}~n \rightarrow \infty,
\end{equation}
where $\xi$ is an exponential random variable with parameter one. Moreover, if  Assumptions~1-5  are satisfied and, for some initial population, 
the limit in (\ref{maineq}) is as specified, then Assumption~ 6 is  also satisfied. 
\end{thm}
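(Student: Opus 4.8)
The plan is to compute the Laplace transform of the left-hand side of (\ref{maineq}) and show that it converges to $1/(1+\theta)$, the Laplace transform of $\xi\sim\mathrm{Exp}(1)$; convergence in distribution then follows from the continuity theorem. Write $m_n=\mathbb{E}\langle\zeta_n,u\rangle=\mathbb{E}\langle Z_n,u\rangle/\mathbb{P}(Z_n\neq\mathbf 0)$ and, for $\theta>0$, set $s_n=\exp(-\theta u/m_n)$ componentwise. Since $\prod_i (s_n)_i^{Z_n(i)}=\exp(-\tfrac{\theta}{m_n}\langle Z_n,u\rangle)$ and the summand is $1$ on $\{Z_n=\mathbf 0\}$, for a single type-$j$ ancestor the transform equals $1-\phi_n^{(j)}/\psi_n^{(j)}$, where $\phi_n^{(j)}=1-f_n^j(s_n)$ and $\psi_n^{(j)}=1-f_n^j(\mathbf 0)=\mathbb{P}({}_jZ_n\neq\mathbf 0)$. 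Thus it suffices to prove $\phi_n^{(j)}/\psi_n^{(j)}\to\theta/(1+\theta)$. For a general deterministic initial population $w=\sum_j w_je_j$, independence of lineages gives $1-f_n^{w}(s_n)=1-\prod_j(1-\phi_n^{(j)})^{w_j}$; since $\phi_n^{(j)}\le\psi_n^{(j)}\to0$ by Assumption 5, the ratio reduces to $\sum_j w_j\phi_n^{(j)}/\sum_j w_j\psi_n^{(j)}$, and once the single-ancestor limit is seen to be independent of $j$ the general case follows. I would therefore fix the ancestor and analyze $\phi_n^{(j)}/\psi_n^{(j)}$.

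Next I would linearize the backward iteration. With $\phi_{k,n}=\mathbf 1-f_{k,n}(s_n)$ (so $\phi_{n,n}=\mathbf 1-s_n$ and $\phi_{0,n}=\phi_n$), the identity $f_{k,n}=g_k\circ f_{k+1,n}$ and a Taylor expansion of $g_k$ at $\mathbf 1$ give
\[
\phi_{k,n}=A_k\phi_{k+1,n}-\tfrac12\,\mathcal Q_k(\phi_{k+1,n})+\text{(remainder)},
\]
where $\mathcal Q_k(\phi)$ has $j$-th entry $\langle\phi,\nabla^2 g_k^j(\mathbf 1)\phi\rangle$ (the second factorial moments of ${}_jX_k$). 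Assumption 1 bounds these quadratic forms below, Assumptions 3--4 bound them above and make the remainder $o(\|\phi\|^2)$ uniformly in $k$. Proposition~\ref{Propabc} is the tool for the linear part: pairing with $u_k$ and using $A_k^Tu_k=\tilde\lambda_ku_{k+1}$ converts it to $\tilde\lambda_k\langle u_{k+1},\phi_{k+1,n}\rangle$, while part (d) forces the nonnegative vector $\phi_{k,n}$ to be comparable to $\langle u_k,\phi_{k,n}\rangle\,v_k$ once $n-k$ is large, with ratios tending to one. Substituting into the quadratic term yields $\langle u_k,\mathcal Q_k(\phi_{k+1,n})\rangle\approx\langle u_{k+1},\phi_{k+1,n}\rangle^2\,E_k$ with $E_k$ bounded above and below by positive constants.

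With these ingredients the scalar $\Gamma_{k,n}=\tilde\Lambda_n/\big(\tilde\Lambda_k\langle u_k,\phi_{k,n}\rangle\big)$ (the analogue of the classical $1/(1-f_n)$ computation) is invariant under the linear flow, and taking reciprocals in the displayed recursion gives
\[
\Gamma_{k,n}-\Gamma_{k+1,n}=\frac{\tilde\Lambda_n}{\tilde\Lambda_{k+1}}\,E_k+\text{(small)}.
\]
Telescoping from $k=n$ to $k=0$ and using $\tilde\Lambda_n\asymp\Lambda_n$, $M_{k,n}\asymp\Lambda_n/\Lambda_k$ from part (e), both $\Gamma_{0,n}=\tilde\Lambda_n/\langle u_0,\phi_n\rangle$ and its $\theta\to\infty$ analogue $\tilde\Lambda_n/\langle u_0,\psi_n\rangle$ (survival probabilities) are expressed through the \emph{same} divergent quadratic sum $\Sigma_n=\sum_k(\tilde\Lambda_n/\tilde\Lambda_{k+1})E_k$. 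Subtracting cancels $\Sigma_n$ and leaves only the boundary terms: $\Gamma_{n,n}=1/\langle u_n,\phi_{n,n}\rangle\approx m_n/(\theta\langle u_n,u\rangle)$ against an $O(1)$ term. The exact identity $\langle u_0,M_nu\rangle=\tilde\Lambda_n\langle u_n,u\rangle$ together with part (d) gives $m_n\langle u_0,\psi_n\rangle/(\tilde\Lambda_n\langle u_n,u\rangle)\to1$, whence, with $a_n=\tilde\Lambda_n/\langle u_0,\psi_n\rangle\asymp m_n$ and $b_n\approx m_n/(\theta\langle u_n,u\rangle)$,
\[
\frac{\langle u_0,\phi_n\rangle}{\langle u_0,\psi_n\rangle}=\frac{1}{1+b_n/a_n}\longrightarrow\frac{1}{1+1/\theta}=\frac{\theta}{1+\theta}.
\]
The comparability $\phi_n^{(j)}\asymp\langle u_0,\phi_n\rangle(v_0)_j$ removes the $j$-dependence, completing sufficiency.

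The crux, and the place where Assumptions 4 and 6 are indispensable, is the error control. The reciprocal recursion is legitimate only because the Taylor remainder is $o(\|\phi_{k+1,n}\|^2)$ uniformly in $k$, which is exactly what Assumption 4 provides, and because $\phi_{k,n}$ stays inside a fixed cone about $v_k$; proving this cone-invariance for the nonlinear iterate (not just the linear one treated by Proposition~\ref{Propabc}) through the Birkhoff contraction is the main technical obstacle. Moreover, the subtraction above is meaningful only if the accumulated error is negligible relative to $a_n\asymp m_n$, which forces $m_n\to\infty$, i.e.\ Assumption 6. For the converse I would argue softly: if Assumption 6 fails, then $m_{n_l}\le C$ along a subsequence, while $\langle\zeta_{n_l},u\rangle\ge\min_i u_i>0$ since at least one particle survives, so $\langle\zeta_{n_l},u\rangle/m_{n_l}\ge\min_i u_i/C=:2\delta_0$. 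Then $\mathbb{P}(\langle\zeta_{n_l},u\rangle/m_{n_l}<\delta_0)=0$ for all $l$, whereas the portmanteau theorem applied to the open set $[0,\delta_0)$ would force $\mathbb{P}(\xi<\delta_0)=0$, contradicting $1-e^{-\delta_0}>0$. Hence the exponential limit forces Assumption 6.
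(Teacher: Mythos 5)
Your strategy coincides with the paper's: prove convergence of Laplace transforms, derive the backward recursion for $\langle \mathbf{1}-f_{k,n}(s),u_k\rangle^{-1}$ by Taylor-expanding $g_k$ at $\mathbf{1}$ and pairing with $u_k$ (this is the paper's identity (\ref{eq1z})), align $\mathbf{1}-f_{k,n}(s)$ with $v_k$ via Birkhoff contraction, and compare the telescoped sums at $s=s_n$ and $s=\mathbf{0}$ so that the divergent quadratic part cancels, leaving the boundary term $1/\langle u_n,\mathbf{1}-s_n\rangle$ against an $O(1)$ term. Your final bookkeeping ($b_n/a_n\to 1/\theta$, hence the ratio tends to $\theta/(1+\theta)$) is correct, and your exact identity $\langle u_0,M_nu\rangle=\tilde\Lambda_n\langle u_n,u\rangle$ is the same device the paper uses in Lemma~\ref{lemmaz}. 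Two genuine differences are worth noting, both in your favor organizationally: you run the computation directly with the fixed vector $u$, whereas the paper first proves Theorem~\ref{nnn} for $\langle\zeta_n,u_n\rangle/\mathbb{E}\langle\zeta_n,u_n\rangle$ and then transfers to fixed $u$ through the collinearity statements (\ref{colli}), (\ref{colli2}) and (\ref{qq}); since the paper's key lemmas are uniform in $s$, your one-pass version is viable. Your converse argument (boundedness of $m_{n_l}$ along a subsequence, plus $\langle\zeta_{n_l},u\rangle\geq\min_i u_i$ because a surviving population is a nonzero integer vector, contradicts the limit law putting mass near zero) is the paper's argument made explicit, and it is correct.

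The gaps are exactly where the paper does its hardest work, and you have flagged them rather than closed them. First, the claim that $\phi_{k,n}$ is comparable to $\langle u_k,\phi_{k,n}\rangle v_k$ with ratios tending to one once $n-k$ is large does not follow from Proposition~\ref{Propabc}(d) alone: that statement controls the linear iteration $M_{k,n}$, while $\mathbf{1}-f_{k,n}(s)$ is produced by the nonlinear maps $g_k$. The paper's Lemma~\ref{lemmaa} establishes this alignment uniformly in $s$, but only in the range $k\leq J(n,\varepsilon)-K$, where $J(n,\varepsilon)$ is the first index at which some component of $\mathbf{1}-f_{k,n}(\mathbf{0})$ exceeds $\varepsilon$; outside that range no alignment holds, and this restriction cannot be waved away. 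Second, and more seriously, your assertion that the two telescoped sums (for $s=s_n$ and $s=\mathbf{0}$) "are expressed through the same divergent quadratic sum $\Sigma_n$" is precisely the statement that must be proved: the summands involve different vectors $\phi_{k+1,n}$ and Hessians evaluated at different intermediate points, and for $J(n,\varepsilon)-K\leq k\leq n$ you have no control at all. The paper's Lemma~\ref{lemmab} resolves this by splitting the sum at $J(n,\varepsilon)-K-1$, using equicontinuity of the Hessians (Assumption~4) on the aligned range, and proving that the tail $\Gamma_n-\Gamma_{J(n,\varepsilon)}$ is $o(\Gamma_n)$ — this last estimate is where Assumption~6 enters, via $\tilde\Lambda_{J(n,\varepsilon)}\Gamma_{J(n,\varepsilon)}\to\infty$. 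You correctly identify these points as "the crux," but identifying the obstacle is not overcoming it: as written, the central cancellation in your telescoping step, and hence the whole sufficiency argument, rests on unproved claims.
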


We say that a process is {\em uniformly critical} if it satisfies
Assumptions 1-4 and 
there is a constant $b$ such that
for each $n,k,$ $i,j$, we have
\begin{equation}
\label{UniCrit}
\frac{1}{b}\leq M_{n, n+k}(j,i) \leq b.
\end{equation}

For uniformly critical processes, $\Lambda_k$ are uniformly bounded from above and below, so 
we have
$$\sum_{k=1}^\infty \frac{1}{\Lambda_k}=\infty, \quad
\lim_{n\to \infty} \Lambda_n \left(\sum_{k=1}^n \frac{1}{\Lambda_k}\right)=\infty. 
$$
Therefore, uniformly critical processes become extinct with probability 1 and 
the distribution of the appropriately scaled number of particles  at time $n,$
conditioned on survival, converges to exponential.

 The next proposition and the lemma that follows will be helpful for comparing our results to those of \cite{Jag}. 
An important part of Proposition~\ref{PrRarify} (part (d)) shows that, under \eqref{UniCrit}  (or even under a weaker condition (\ref{NotSuperCrit})),  Assumption 2 almost follows from Assumption 1, in the sense that Assumption 2 is satisfied for an appropriate subprocess. Given $l,$ let $\tP_n$ be
the transition probability of the process $\{\tZ_{n}\}$, where $\tZ_n=Z_{nl}.$ That is,
$\tP_n(i,a)$ represents the probability that a particle of type $i$ that is alive at time $nl$ is replaced in  generation $(n+1)l$ by $a_1+...+a_d$ particles: $a_1$ particles of type one, $a_2$ particles of type two, etc. 

\begin{prop}
\label{PrRarify}
(a) If $P_n$ satisfies Assumption 1, then $\tP_n$ satisfies Assumption 1 for each $l$.

(b) If $P_n$ satisfies Assumption 3, then $\tP_n$ satisfies Assumption 3 for each $l$.

(c) If $P_n$ satisfies Assumption 4, then $\tP_n$ satisfies Assumption 4 for each $l$.

(d) If $P_n$ satisfies Assumption 1, and there is a constant $\fb$ such that
for each $n,k, j$
\begin{equation}
\label{NotSuperCrit}  
\mathbb{E}(|Z_{n+k}||Z_n=e_j)\leq \fb,
\end{equation}
then there exist $l=l(\eps_0, \fb)$ and $\eps_1=\eps_1(\eps_0, \fb)$ such that for each $n$ and $j$
$$ \tP_n(\tZ_{n+1}=\mathbf{0}|\tZ_{n}=e_j)\geq \eps_1.$$
\end{prop}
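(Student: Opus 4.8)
\emph{Parts (a)--(c).} For part (a) I would exhibit one favorable history. Conditioned on a single type-$j$ particle at time $nl$, with probability $\ge\eps_0$ it has at least two type-$i$ children at time $nl+1$ (Assumption~1); running out of two of these two independent type-$i$ lines of descent (at each of the remaining $l-1$ steps a type-$i$ particle has a type-$i$ child with probability $\ge\eps_0$) keeps a type-$i$ particle alive in each line, so $\PROB(\tZ_{n+1}(i)\ge 2\mid\tZ_n=e_j)\ge\eps_0^{\,2l-1}$, which is Assumption~1 for $\tP_n$. For part (b), Assumption~3 gives $\EXP(\|Z_{m+1}\|\mid Z_m=e_j)\le\sqrt{K_0}$ and, by conditional independence of the offspring, the one-step bound $\EXP(\|Z_{m+1}\|^2\mid Z_m)\le K_0\|Z_m\|+K_0\|Z_m\|^2$; iterating this linear recursion $l$ times (the first moments staying bounded) bounds $\EXP(\|Z_{(n+1)l}\|^2\mid Z_{nl}=e_j)$ by a constant depending only on $K_0,l$. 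For part (c), note first that Assumption~4 implies Assumption~3, so the moment bounds of (b) hold at every intermediate time. I would then propagate uniform integrability across one branching step and iterate: writing $\|Z_{m+1}\|$ as the sum of the offspring counts of the $\|Z_m\|$ parents and truncating each count at level $T$, the bounded part is dominated by $T\|Z_m\|$ (so its square is uniformly integrable for fixed $T$ by the inductive hypothesis), while the tail part $B_T$ satisfies $\EXP(B_T^2)\le\sqrt{C}\,\rho(T)+C\,\eta(T)^2$ with $\rho(T)=\sup\EXP(\|{}_jX_m\|^2\mathbf 1_{\|{}_jX_m\|>T})\to0$ (Assumption~4) and $\eta(T)=\sup\EXP(\|{}_jX_m\|\mathbf 1_{\|{}_jX_m\|>T})\to0$; feeding this into the criterion $\sup_\alpha\EXP[(W_\alpha-K)_+]\to0$ (first $T\to\infty$, then $K\to\infty$) shows $\|Z_{m+1}\|^2$ is uniformly integrable, and $l$ iterations from $\|Z_{nl}\|=1$ give Assumption~4 for $\tP_n$.

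\emph{Part (d).} This is the heart of the matter. First I would observe that Assumption~1 gives $A_m(j,i)\ge 2\eps_0$ and \eqref{NotSuperCrit} (with $k=1$) gives $A_m(j,i)\le\fb$; since the proof of Proposition~\ref{Propabc} uses only such two-sided entrywise bounds on $A_m$, its conclusions --- the vectors $u_m$, the numbers $\tlambda_m$, and the constants $a,K,\bar\varepsilon$ --- are available here. Combining \eqref{NotSuperCrit} with Proposition~\ref{Propabc}(e) shows that every window product $\prod_{i=nl}^{nl+r-1}\tlambda_i$ is comparable (within the factor $K$) to $M_{nl,nl+r}(j,i)\le\fb$, hence bounded above by a constant $\tLambda^*=\tLambda^*(\eps_0,\fb)$. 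Now fix $n$ and, on the window $[nl,(n+1)l]$, let $a_m(j)=\PROB(Z_{(n+1)l}\neq\mathbf 0\mid Z_m=e_j)$, so $a_{(n+1)l}=\mathbf 1$. Conditioned on $Z_{m+1}$, a nonempty progeny at time $(n+1)l$ is the union over the particles of $Z_{m+1}$ of the independent events ``this particle has nonempty progeny''; the Bonferroni inequality $\PROB(\cup_k E_k)\le\sum_k\PROB(E_k)-\PROB(E_{k_1}\cap E_{k_2})$, applied to two particles of the type $i^\ast$ maximizing $a_{m+1}$ (present with probability $\ge\eps_0$ by Assumption~1), yields
\[
a_m(j)\ \le\ (A_m a_{m+1})(j)\ -\ \eps_0\big(\max_i a_{m+1}(i)\big)^2 .
\]

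Pairing this with the left Perron--Frobenius vector, using $A_m^{T}u_m=\tlambda_m u_{m+1}$ and $\langle u_m,\mathbf 1\rangle=\|u_m\|_1=1$, the quantity $\Phi_m=\langle u_m,a_m\rangle$ (which satisfies $\bar\varepsilon\max_i a_m(i)\le\Phi_m\le\max_i a_m(i)$) obeys the scalar backward recursion
\[
\Phi_m\ \le\ \tlambda_m\Phi_{m+1}-\eps_0\Phi_{m+1}^2,\qquad \Phi_{(n+1)l}=1 .
\]
To remove the multiplier I would set $\Phi_m=D_m\Theta_m$ with $D_m=\prod_{i=m}^{(n+1)l-1}\tlambda_i$, turning this into $\Theta_m\le\Theta_{m+1}-\eps_0(D_{m+1}/\tlambda_m)\Theta_{m+1}^2$ with $\Theta_{(n+1)l}=1$. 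If the full window product satisfies $D_{nl}\le\bar\varepsilon/2$, then $\Phi_{nl}\le D_{nl}\le\bar\varepsilon/2$ at once; otherwise $D_{m+1}\ge(\bar\varepsilon/2)/\tLambda^*$ for every $m$, so the recursion dominates $\Theta_m\le\Theta_{m+1}-\kappa\Theta_{m+1}^2$ for a constant $\kappa=\kappa(\eps_0,\fb)>0$, which forces $1/\Theta_{nl}\ge1+\kappa l$ and hence $\Phi_{nl}\le\tLambda^*/(1+\kappa l)$. In either case $\max_j a_{nl}(j)\le\Phi_{nl}/\bar\varepsilon\le\tfrac12$ once $l=l(\eps_0,\fb)$ is large enough, and since all constants are uniform in $n$ this is exactly $\tP_n(\tZ_{n+1}=\mathbf 0\mid\tZ_n=e_j)\ge\tfrac12=:\eps_1$.

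\emph{Main obstacle.} The delicate point is this derivation in part (d): extracting a genuinely quadratic gain $-\eps_0(\max_i a_{m+1}(i))^2$ from Assumption~1 alone --- this is where the Bonferroni bound applied to two same-type offspring is essential --- and then converting the resulting vector inequality into a scalar recursion whose linear coefficient $\tlambda_m$ has controlled products. The latter is precisely where the non-supercriticality hypothesis \eqref{NotSuperCrit} enters, and the two regimes (window product small versus bounded below) must be treated separately to make the final bound uniform in $n$. Parts (a)--(c) are routine by comparison, with (c) the most technical, since uniform integrability, unlike a plain moment bound, has to be propagated through the branching step rather than merely estimated.
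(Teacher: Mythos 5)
Your parts (a)--(c) are correct and follow essentially the paper's own route: (a) is the same favorable-history argument (the paper keeps a line alive and branches at the last step, you branch at the first step and keep two type-$i$ lines alive, getting $\eps_0^{2l-1}$); (b) is the same random-sum second-moment computation, organized as a one-step recursion rather than the paper's $l=2$-plus-induction; (c) is the same truncation scheme, except that you truncate the offspring sizes at level $T$ whereas the paper truncates the number of parents at a level $M$ --- both versions propagate uniform integrability through one branching step and iterate.

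Part (d) is correct but genuinely different from the paper's proof. The paper argues combinatorially: it calls $(n,j)$ $1$-unstable if one-step extinction has probability at least $\eps_0/2$, and recursively $(p+1)$-unstable if with probability at least $\eps_0/2$ all children are $p$-unstable; Assumption 1 then makes every stable particle produce on average at least $1+\eps_0/2$ stable children, so a stable initial pair would force expected population growth $(1+\eps_0/2)^l>\fb$, contradicting \eqref{NotSuperCrit}; finally the recursion $\eta_p\geq(\eps_0/4)\,\eta_{p-1}^{M}$ with $M=4\fb/\eps_0$ (Markov's inequality controlling the number of children) yields $\eps_1=\eta_l>0$. You instead observe that the Appendix~A proof of Proposition~\ref{Propabc} uses only two-sided entrywise bounds on $A_m$ --- which is accurate, and here $2\eps_0\leq A_m(j,i)\leq\fb$ follows from Assumption 1 and \eqref{NotSuperCrit} with $k=1$ --- and then run a Kolmogorov-type Riccati recursion $\Phi_m\leq\tlambda_m\Phi_{m+1}-\eps_0\Phi_{m+1}^2$ for the $u_m$-weighted survival probabilities, treating separately the regimes where the window product $D_{nl}$ is small or bounded below. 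This is close in spirit to the paper's own reciprocal recursion \eqref{eq1z} behind Theorem~\ref{yth}, localized to a single window; all the individual steps (the Bonferroni extraction of the quadratic term from two same-type offspring, the pairing with $u_m$ using $\langle u_m,\mathbf{1}\rangle=1$, the substitution $\Phi_m=D_m\Theta_m$, and the telescoping $1/\Theta_{nl}\geq 1+\kappa l$) check out. The trade-off: the paper's argument is elementary and self-contained but produces an extremely small $\eps_1$ (iterated exponentials in $l$); yours asks the reader to verify that Proposition~\ref{Propabc} survives the weakening of Assumption 3 to a first-moment bound, but delivers the clean constant $\eps_1=1/2$ and, as a byproduct, the quantitative decay of the window survival probability of order $1/(1+\kappa l)$, mirroring the classical critical-case asymptotics.
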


The above Proposition is proved in Appendix \ref{AppSkip}. The following lemma is proved in Section~\ref{sec3}.
\begin{lem}
\label{CrJagers}
If $\{Z_n\}$ satisfies Assumptions 1 and 4, and \eqref{UniCrit} holds, then extinction happens with
probability one and \eqref{maineq} holds.
\end{lem}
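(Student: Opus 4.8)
The plan is to reduce the assertion to the uniformly critical case discussed right after \eqref{UniCrit}, by passing to the process observed every $l$ steps. Two elementary reductions come first. Uniform integrability of $\|_jX_n\|^2$ forces $\sup_{j,n}\EXP\|_jX_n\|^2<\infty$, so Assumption~4 implies Assumption~3; and \eqref{UniCrit} gives $\EXP(\|Z_{n+k}\|\mid Z_n=e_j)=\sum_i M_{n,n+k}(j,i)\le d\,b$, so \eqref{NotSuperCrit} holds with $\fb=d\,b$. Hence all hypotheses of Proposition~\ref{PrRarify}(d) are in force, producing $l$ and $\eps_1>0$ for which the subprocess $\tZ_n=Z_{nl}$ satisfies Assumption~2. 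By parts (a)--(c) of that proposition $\tZ_n$ also satisfies Assumptions~1,3,4, and since $\tM_{n,n+k}=M_{nl,(n+k)l}$ it inherits \eqref{UniCrit} verbatim. Thus $\tZ_n$ is uniformly critical, so by Theorems~\ref{yth} and~\ref{the2} (whose Assumptions~5,6 are automatic under \eqref{UniCrit}) the subprocess dies out almost surely and obeys the exponential limit law along the times $nl$.

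Extinction of $\{Z_n\}$ transfers at once, since $\{Z_n\}$ dies out iff $\{Z_{nl}\}$ does, which is also Lemma~\ref{ljj3}(b). The substantial point is to upgrade \eqref{maineq} from the subsequence $n\in l\,\mathbb{Z}_+$ to all $n\to\infty$. Fixing $u$ with positive components and writing $n=ln'+r$ with $0\le r<l$, I would work with Laplace transforms: it suffices to show $\EXP\big[1-e^{-\theta\langle Z_n,u\rangle/c_n}\big]/\PROB(Z_n\ne\mathbf 0)\to\theta/(1+\theta)$ for every $\theta>0$, where $c_n=\EXP\langle\zeta_n,u\rangle$ and $c_n\to\infty$ under \eqref{UniCrit}. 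Using $f_n=f_{ln'}\circ f_{ln',n}$, set $s^{(n)}_i=e^{-\theta u_i/c_n}$ and $t^{(n)}=f_{ln',n}(s^{(n)})$, so the transform equals $1-f_{ln'}(t^{(n)})$. As only the bounded number $r<l$ of steps separates time $ln'$ from $n$ and $c_n\to\infty$, the bounded second moments from Assumptions~3--4 allow the linearization $1-t^{(n)}_i=\frac{\theta}{c_n}(M_{ln',n}u)_i\,(1+o(1))$ uniformly in $i$, whence $1-f_{ln'}(t^{(n)})=\EXP\big[1-e^{-\frac{\theta}{c_n}\langle Z_{ln'},\,M_{ln',n}u\rangle(1+o(1))}\big]$.

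By \eqref{UniCrit} the entries of $M_{ln',n}u=M_{ln',ln'+r}u$ lie in a fixed positive interval, and on the survival event the direction of $Z_{ln'}$ concentrates near $u_{ln'}$ (cf. the remark after Proposition~\ref{Propabc} and the concentration estimates obtained while proving Theorem~\ref{the2}); this replaces $\langle Z_{ln'},M_{ln',n}u\rangle$ by $\rho_n\langle Z_{ln'},u\rangle$ up to a factor $1+o(1)$, where $\rho_n=\langle u_{ln'},M_{ln',n}u\rangle/\langle u_{ln'},u\rangle$ is confined to a fixed positive interval. It then remains to match normalizations, i.e. to prove $c_n=\rho_n c_{ln'}(1+o(1))$; granting this, the above transform agrees asymptotically with the subprocess transform $\EXP[1-e^{-\theta\langle Z_{ln'},u\rangle/c_{ln'}}]$, whose limit $\theta/(1+\theta)$ is already known, and the upgrade is complete.

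Writing $c_n/c_{ln'}=\frac{\EXP\langle Z_n,u\rangle}{\EXP\langle Z_{ln'},u\rangle}\cdot\frac{\PROB(Z_{ln'}\ne\mathbf 0)}{\PROB(Z_n\ne\mathbf 0)}$, the first factor equals $\langle\EXP Z_{ln'},M_{ln',n}u\rangle/\langle\EXP Z_{ln'},u\rangle=\rho_n(1+o(1))$ because $\EXP Z_{ln'}$ also points asymptotically along $u_{ln'}$ by Proposition~\ref{Propabc}(d), irrespective of the initial population; the second factor tends to $1$ because on survival $\|Z_{ln'}\|\to\infty$ in probability, while a Paley--Zygmund bound makes a single lineage survive $r$ steps with probability bounded below, so $\PROB(Z_{ln'}\ne\mathbf 0,\,Z_n=\mathbf 0)\le\EXP[\mathbf 1_{Z_{ln'}\ne\mathbf 0}\,q^{\|Z_{ln'}\|}]=o(\PROB(Z_{ln'}\ne\mathbf 0))$ for some $q<1$. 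I expect this normalization matching---the reproductive-value cancellation that renders the limit insensitive both to the bounded final segment and to the initial population---to be the main obstacle; the remaining ingredients are either direct appeals to Proposition~\ref{PrRarify} and the uniformly critical case, or routine bounded-step perturbation estimates.
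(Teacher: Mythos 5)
Your proposal is correct, and its skeleton coincides with the paper's proof: both arguments reduce to the $l$-step subprocess $\{Z_{nl}\}$ via Proposition~\ref{PrRarify} (noting that Assumption~4 implies Assumption~3 and that \eqref{UniCrit} implies \eqref{NotSuperCrit}), invoke Theorems~\ref{yth} and~\ref{the2} for that subprocess (Assumptions~5--6 being automatic under \eqref{UniCrit}), and then transfer the exponential limit from the subsequence $nl$ to all times $n=Nl+r$. The difference is in how the transfer over the remaining $r<l$ steps is implemented. The paper states it as the in-probability claim \eqref{RatioCSurv} --- that $\langle \zeta_{Nl+r},u\rangle/\langle \zeta_{Nl},u_{Nl}\rangle$ is asymptotically the deterministic constant $(\Lambda_{Nl+r}/\Lambda_{Nl})\langle u_{Nl+r},u\rangle$ --- and leaves its proof to the reader as ``similar to \eqref{colli2} and \eqref{qq}.'' You instead work at the level of Laplace transforms: composing $f_n=f_{Nl}\circ f_{Nl,n}$, linearizing the deterministic map $f_{Nl,n}$ (legitimate since $c_n\to\infty$), applying the direction concentration \eqref{colli2} for the subprocess, and matching normalizations, including the explicit verification that $\PROB(Z_n\neq\mathbf{0})/\PROB(Z_{Nl}\neq\mathbf{0})\to 1$. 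Your route buys some self-containedness: since the last $r$ steps enter only through the generating function $f_{Nl,n}$, no conditional law of large numbers for $Z_{Nl+r}$ given a large $Z_{Nl}$ is needed, and the comparison of the two conditioning events (implicit in the paper's juxtaposition of $\zeta_{Nl+r}$ and $\zeta_{Nl}$, which are conditioned on different events) is made explicit via the bound $\PROB(Z_{Nl}\neq\mathbf{0},\,Z_n=\mathbf{0})\leq \EXP\bigl[\mathbf{1}_{Z_{Nl}\neq\mathbf{0}}\,q^{\|Z_{Nl}\|}\bigr]$. One cosmetic point: the ``Paley--Zygmund bound'' you invoke for a single lineage to survive $r$ steps is unnecessary --- Assumption~1 directly gives survival probability at least $\eps_0^{\,r}$.
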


For single type branching processes, Lemma~\ref{CrJagers} is helpful in showing that our results imply  Theorem 5 of \cite{Jag}. 
In fact, the assumptions of  Theorem 5 of \cite{Jag} (generalized to the multi-type case) are: (a) our Assumption 4, 
(b) that \eqref{UniCrit} holds, and (c) that there is $\breps_0 > 0$ such that for each $n, i, j $ 
\begin{equation}
\label{JIn}
\mathbb{E} (Z^2_{n+1}(i) -Z_{n+1}(i)|Z_n=e_j)\geq \breps_0. 
\end{equation} 
We claim that under Assumption 4, \eqref{JIn} is equivalent to Assumption 1.
On the one hand,
$$ \mathbb{E} (Z^2_{n+1}(i) -Z_{n+1}(i)|Z_n=e_j)\geq 2 \mathbb{P}(Z_{n+1}(i)\geq 2|Z_n=e_j). $$
On the other hand,  by Assumption 4, we can take $N\geq 2$ such that
$$ \mathbb{E} (Z^2_{n+1}(i) \chi_{Z_{n+1}(i)\geq N} |Z_n=e_j)\leq \frac{\breps_0}{2} $$
for all $n$, 
where $\chi_{Z_{n+1}(i)\geq N}$ is the indicator function of the the event $\{Z_{n+1}(i)\geq N\}$. 
Then
$$ \mathbb{E} (Z^2_{n+1}(i) -Z_{n+1}(i)|Z_n=e_j)\leq \frac{\breps_0}{2}+(N^2-N) 
\mathbb{P}(Z_{n+1}(i)\geq 2|Z_n=e_j). $$ 
Thus if \eqref{JIn} holds, then
$$\mathbb{P}(Z_{n+1}(i)\geq 2|Z_n=e_j)\geq \frac{\breps_0}{2(N^2-N)}, $$
proving that Assumption 1 is equivalent to \eqref{JIn}. 

The results of  Theorem 5 of \cite{Jag} are: our Lemma~\ref{lemmac} (in the single type case) and
our formula \eqref{maineq} (in the single type case). The latter holds by Lemma \ref{CrJagers}. We prove
Lemma~\ref{lemmac} in Section~\ref{sec3} under Assumptions 1-6. However, under Assumptions 1, 4, and \eqref{UniCrit}, the conclusion of the
lemma still holds (the argument is similar to that in the proof of Lemma \ref{CrJagers}).

\section{Survival vs extinction} \label{settt}

\begin{proof}[Proof of Theorem~\ref{yth}]

(PART I) $\sum_{k=1}^{\infty}({1}/{\Lambda}_{k}) < \infty$ implies positive probability of survival.
\medskip

Let us fix $Z_0 = e_j$ with an arbitrary $j \in S$. Let ${\cal{F}}_n$ be the $\sigma$-algebra generated by the branching process $\{Z_n\} = \{_jZ_n\}$.
Let $z_n = \langle Z_n, v_n \rangle$. Then, 
\[
 \mathbb{E}(z_{n+1}|{\cal F}_n) = \langle \mathbb{E}(Z_{n+1}|Z_n), v_{n+1} \rangle = \langle A_n^TZ_n, v_{n+1} \rangle = \langle Z_n, A_nv_{n+1} \rangle = \lambda_n z_n
\]
Accordingly, $\{z_n/\Lambda_n\}$ is a positive martingale, and hence it converges to some random variable $z_{\infty}$.
Now let
\[
 D_n(j_1,j_2) = \text{Cov}(Z_n(j_1),Z_n(j_2)).
\]
One step analysis gives
\begin{equation}\label{equ}
 D_{n+1} = A_n^TD_nA_n + S_n,
\end{equation}
where 
\[
 S_n = \sum_{i = 1}^{d} M_n(j,i)\sigma_n^2(i)
\]
and 
\[
 \sigma_n^2(j_1,j_2)(i) = \text{Cov}(_iX_n(j_1), {_iX_n(j_2)}).
\]

By Proposition \ref{Propabc}, there exists a constant B such that, $\|S_n\| \leq B\Lambda_n$, where $\|\cdot\|$ is a matrix norm.
Iterating (\ref{equ}), we get
\[
 D_n = \sum_{k = 0}^{n-1}M_{k+1,n}^TS_kM_{k+1,n}.
\]
Hence,
\[
 \|D_n\| \leq B_1 \sum_{k = 0}^{n-1} \Big(\frac{\Lambda_n}{\Lambda_{k+1}}\Big)^2\Lambda_k \leq B_2 \Lambda_n^2 \sum_{k = 0}^{n-1}\frac{1}{\Lambda_k}
\]
with some constants $B_1$, $B_2$. 

Thus  $\|D_n\| \leq \tilde{B}\Lambda_n^2$, and so the martingale $\{z_n/\Lambda_n\}$ is uniformly bounded in $L^2$. Therefore,
 $\mathbb{E}(z_{\infty}) = \mathbb{E}(z_0) > 0$, and hence $\mathbb{P}(z_{\infty} > 0) > 0$, implying that the probability of survival of the branching process starting with a single particle of type $j$ is positive. Therefore, the probability of survival is positive for every initial population.\\

(PART II) $\sum_{k=1}^{\infty}({1}/{\Lambda}_{k}) = \infty$ implies that extinction occurs with probability one.\\

Recall that $f_{0,n}(s) = g_{0}(f_{1,n}(s))$ and $f_{0,n}(\mathbf{1}) = g_{0}(\mathbf{1}) = \mathbf{1}$. Determining the asymptotic behavior of  $ \langle \mathbf{1} - f_{0,n}(s) , u_{0} \rangle$ will be helpful for proving the theorem and also later in the proof of (\ref{ew1}). By the Taylor formula with respect to $s = \mathbf{1}$,
\[
 \langle \mathbf{1} - f_{0,n}(s) , u_{0} \rangle   = \langle Dg_{0}(\mathbf{1})(\mathbf{1} - f_{1,n}(s)) , u_{0} \rangle - \frac{1}{2}\langle(\mathbf{1} - f_{1,n}(s))^THg_{0}(\eta_{1,n})(\mathbf{1} - f_{1,n}(s)) , u_{0} \rangle
\]
\[
= \langle A_{0}(\mathbf{1} - f_{1,n}(s)) , u_{0} \rangle - \frac{1}{2}\langle(\mathbf{1} - f_{1,n}(s))^THg_{0}(\eta_{1,n})(\mathbf{1} - f_{1,n}(s)) , u_{0} \rangle,
\]
where $Dg_0$ is the gradient of $g_0$ and $\eta_{1,n} = \eta_{1,n} (j,s)$ satisfies  $f^j_{0,n}(s) \leq \eta_{1,n} \leq 1$ for each component $j \in S$ and $s \in [0,1]^d$. Here $H g_0 $ stands for the Hessian matrix applied to each component of the vector function $g_0$ separately, then multiplied by vectors $(\mathbf{1} - f_{1,n}(s))^T$ and $(\mathbf{1} - f_{1,n}(s))$ to
get scalars, which are then multiplied by the corresponding components of $u_0$ to form the scalar product.
Therefore, by taking the transpose of $A_0$,
\[
  \langle \mathbf{1} - f_{0,n}(s) , u_{0} \rangle  = \langle (\mathbf{1} - f_{1,n}(s)) , A_{0}^{T}u_{0} \rangle - \frac{1}{2}\langle(\mathbf{1} - f_{1,n}(s))^THg_{0}(\eta_{1,n})(\mathbf{1} - f_{1,n}(s)) , u_{0} \rangle
  \]
\[= \langle (\mathbf{1} - f_{1,n}(s)) , \tilde{\lambda}_{0}u_{1} \rangle - \frac{1}{2}\langle(\mathbf{1} - f_{1,n}(s))^THg_{0}(\eta_{1,n})(\mathbf{1} - f_{1,n}(s)) , u_{0} \rangle.
\]
Thus, for $s \neq \mathbf{1}$, 
\[
(\langle \mathbf{1} - f_{0,n}(s) , u_{0} \rangle) ^{-1}  = \Big(\langle (\mathbf{1} - f_{1,n}(s)) , \tilde{\lambda}_{0}u_{1} \rangle - \frac{1}{2}\langle(\mathbf{1} - f_{1,n}(s))^THg_{0}(\eta_{1,n})(\mathbf{1} - f_{1,n}(s)) , u_{0} \rangle\Big)^{-1}
\]
\[
= ( \tilde{\lambda}_{0}\langle (\mathbf{1} - f_{1,n}(s)) ,u_{1} \rangle)^{-1} \Big( 1  +\frac{\langle -\frac{1}{2}(\mathbf{1} - f_{1,n}(s))^THg_{0}(\eta_{1,n})(\mathbf{1} - f_{1,n}(s)) , u_{0} \rangle}{\tilde{\lambda}_{0}\langle (\mathbf{1} - f_{1,n}(s)) ,u_{1} \rangle}\Big)^{-1}
\]
\[
= \frac{1}{\tilde{\lambda}_{0}\langle (\mathbf{1} - f_{1,n}(s)) ,u_{1} \rangle} +\frac{\langle \frac{1}{2}(\mathbf{1} - f_{1,n}(s))^THg_{0}(\eta_{1,n})(\mathbf{1} - f_{1,n}(s)) , u_{0} \rangle}{\tilde{\lambda}_{0}\langle (\mathbf{1} - f_{1,n}(s)) ,u_{1} \rangle \langle \mathbf{1} - f_{0,n}(s) , u_{0} \rangle },
\]
where the last equality follows from the simple relation
\[
 \frac{1}{a} = \frac{1}{b}\left(1 - \frac{c}{b}\right)^{-1}~~\Longrightarrow~~\frac{1}{a} = \frac{1}{b} + \frac{c}{ba}.
\]
By iterating the previous equality $n$ times, we get
\begin{equation} \label{eq1z}
\langle \mathbf{1} - f_{0,n}(s) , u_{0} \rangle ^{-1}
= \frac{1}{\tilde{\Lambda}_{n}\langle \mathbf{1}-s , u_n\rangle} + \frac{1}{2} \sum_{k = 0}^{n-1} \frac{\langle (\mathbf{1} - f_{k+1,n}(s))^THg_{k}(\eta_{k+1,n})(\mathbf{1} - f_{k+1,n}(s)) , u_{k} \rangle}{\tilde{\Lambda}_{k+1}\langle (\mathbf{1} - f_{k+1,n}(s)) ,u_{k+1} \rangle \langle \mathbf{1} - f_{k,n}(s) , u_{k} \rangle },
\end{equation}
where $f^j_{k,n}(s) \leq \eta_{k+1,n}(j,s) \leq 1$ for each $k \geq 0 $ and $j \in S$.

Let 
\begin{equation} \label{lll}
 \alpha(n,s) = \frac{1}{2}\sum_{k = 0}^{n-1} \frac{\langle (\mathbf{1} - f_{k+1,n}(s))^THg_{k}(\eta_{k+1,n})(\mathbf{1} - f_{k+1,n}(s)) , u_{k} \rangle}{\tilde{\Lambda}_{k+1}\langle (\mathbf{1} - f_{k+1,n}(s)) ,u_{k+1} \rangle \langle \mathbf{1} - f_{k,n}(s) , u_{k} \rangle },
\end{equation}
where we note again that the dependence on $s$ also lies in the vector  $\eta_{k+1,n}$ since the components of $\eta_{k+1,n}$ satisfy $ f_{k,n}(s)(i) \leq \eta_{k+1,n}(i) \leq 1$. Then
(\ref{eq1z}) takes the form
\begin{equation} \label{hhk}
 \langle \mathbf{1} - f_{0,n}(s) , u_{0} \rangle  =  \Big(\frac{1}{\tilde{\Lambda}_{n}\langle \mathbf{1}-s , u_n\rangle} + \alpha(n,s)\Big)^{-1}.
\end{equation}

We will need the following lemma which will be proved after we complete the proof of this Theorem.

Let us denote 
\[
\Xi_n = \sum_{k = 0}^{n-1}\frac{1}{{\Lambda}_{k+1}}.
\]
These are the partial sums of the series found in Theorem~\ref{yth},
but with the index of summation shifted in order to make the arguments below more transparent.

\begin{lem} \label{lemma1}  Under Assumptions 1-3, 
there exists  $C > 1$  such that for each $n$ and each $s \in [0,1]^d  \setminus \{ \mathbf{1} \} $ we have  
\[
\frac{1}{C} \leq \frac{\alpha(n,s)}{\Xi_n} \leq C.
\] 
\end{lem}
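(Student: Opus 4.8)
The plan is to show that each summand in $\alpha(n,s)$ is comparable, uniformly in $k$, $n$ and $s$, to $1/\tilde{\Lambda}_{k+1}$; summing over $k$ and invoking part (e) of Proposition~\ref{Propabc} (which gives $\Lambda_m$ comparable to $\tilde{\Lambda}_m$) then yields that $\alpha(n,s)$ is comparable to $\sum_{k=0}^{n-1}1/\Lambda_{k+1}=\Xi_n$. Throughout I write $w_k=\mathbf{1}-f_{k,n}(s)$, a vector with components in $[0,1]$ that is nonzero for $s\neq\mathbf{1}$, and $\psi_k=\langle w_k,u_k\rangle$, so that $2\alpha(n,s)=\sum_{k=0}^{n-1}\langle w_{k+1}^T Hg_k(\eta_{k+1,n})w_{k+1},u_k\rangle/(\tilde{\Lambda}_{k+1}\psi_{k+1}\psi_k)$, with all factors $\psi_{k+1},\psi_k$ strictly positive by Proposition~\ref{Propabc}(b).

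First I would replace the Hessian remainder by an exact expectation. Since $\eta_{k+1,n}$ is, by construction, the mean-value point in the second-order Taylor expansion of $g_k$ at $\mathbf{1}$, for each component $j$ one has $\frac12\,w_{k+1}^T Hg_k^j(\eta_{k+1,n})w_{k+1}=(A_kw_{k+1})_j-w_k^{(j)}=\mathbb{E}\big[h({}_jX_k)\big]$, where, writing $r=w_{k+1}$, $h(X)=\sum_l X_l r_l-1+\prod_l(1-r_l)^{X_l}$. The point of this reformulation is that it holds regardless of where $\eta_{k+1,n}$ sits, so it sidesteps the degeneration of the pointwise Hessian bounds when $\eta_{k+1,n}$ approaches the boundary of $[0,1]^d$. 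I would then read $h(X)$ probabilistically: given $X$, run independent Bernoulli trials, one per offspring, a type-$l$ offspring succeeding with probability $r_l$; if $S$ is the number of successes then $h(X)=\mathbb{E}[S]-\mathbb{P}(S\ge 1)=\sum_{m\ge 2}(m-1)\mathbb{P}(S=m)$, whence the two-sided bound $\mathbb{P}(S\ge 2)\le h(X)\le\frac12\big(\sum_l X_l r_l\big)^2$.

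Next I would extract the comparison $\mathbb{E}[h({}_jX_k)]\asymp\|w_{k+1}\|^2$. For the lower bound, choose $p$ with $r_p=\max_l w_{k+1}^{(l)}\ge\|w_{k+1}\|/d$; on the event $\{{}_jX_k(p)\ge 2\}$ two type-$p$ offspring both succeed with probability at least $r_p^2$, so $h\ge r_p^2$ there, and Assumption~1 gives $\mathbb{E}[h({}_jX_k)]\ge r_p^2\,\mathbb{P}({}_jX_k(p)\ge 2)\ge\varepsilon_0\|w_{k+1}\|^2/d^2$. For the upper bound, $\mathbb{E}[h({}_jX_k)]\le\frac12\mathbb{E}\|{}_jX_k\|^2\,(\max_l r_l)^2\le\frac{K_0}{2}\|w_{k+1}\|^2$ by Assumption~3. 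Pairing with $u_k$ (using $u_k(j)\ge\bar\varepsilon$ and $\|u_k\|=1$ from Proposition~\ref{Propabc}(a),(b)) turns this into $\frac{2\varepsilon_0}{d^2}\|w_{k+1}\|^2\le\langle w_{k+1}^T Hg_k(\eta_{k+1,n})w_{k+1},u_k\rangle\le K_0\|w_{k+1}\|^2$.

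For the denominator I would first establish the one-step comparison $\|w_k\|\asymp\|w_{k+1}\|$. Writing $w_k^{(j)}=\mathbb{P}({}_jX_k \text{ has at least one successful offspring})$ in the same Bernoulli picture, the estimate $w_k^{(j)}\ge r_p\,\mathbb{P}({}_jX_k(p)\ge 1)\ge\varepsilon_0\|w_{k+1}\|/d$ gives $\|w_k\|\ge\varepsilon_0\|w_{k+1}\|$, while $w_k^{(j)}\le\mathbb{E}[S]=(A_kw_{k+1})_j$ together with $A_k(j,l)\le\sqrt{K_0}$ (Assumption~3 and Cauchy--Schwarz) gives $\|w_k\|\le d\sqrt{K_0}\,\|w_{k+1}\|$. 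Combining these with $\bar\varepsilon\|w_m\|\le\psi_m\le\|w_m\|$ yields that $\psi_{k+1}\psi_k$ is comparable to $\|w_{k+1}\|^2$. Dividing the numerator bound by $\tilde{\Lambda}_{k+1}\psi_{k+1}\psi_k$, the factors $\|w_{k+1}\|^2$ cancel and each summand is comparable to $1/\tilde{\Lambda}_{k+1}$; summing over $k$ and passing from $\tilde{\Lambda}$ to $\Lambda$ via Proposition~\ref{Propabc}(e) finishes the argument. I expect the only genuine obstacle to be the lower bound on the second-order term, precisely because the mean-value point $\eta_{k+1,n}$ need not stay away from the boundary of the cube; the exact-expectation identity together with the elementary inequality $h(X)\ge\mathbb{P}(S\ge 2)$ is exactly what removes this difficulty.
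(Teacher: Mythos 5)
Your proof is correct, and although it shares the paper's skeleton --- bound each summand of $\alpha(n,s)$ above and below by a constant times $1/\Lambda_{k+1}$, then sum and invoke Proposition~\ref{Propabc}(e) --- the two key estimates are obtained by a genuinely different route. The paper controls the Hessian term by noting that Assumption 2 forces $\eta_{k+1,n}(j) \geq f^j_{k,n}(\mathbf{0}) \geq \varepsilon_0$, so the mean-value point stays in $[\varepsilon_0,1]^d$ where $Hg_k$ admits uniform two-sided bounds (the lower bound itself requires a small truncation argument combining Assumptions 1 and 3, which the paper glosses over); it then proves $\|\mathbf{1}-f_{k+1,n}(s)\| \leq c_2\|\mathbf{1}-f_{k,n}(s)\|$ by a two-case analysis (small $\|\mathbf{1}-f_{k+1,n}(s)\|$: Taylor formula plus uniform positivity of $A_k$; otherwise: $g_k^j(s)\leq(1-\varepsilon_0)+\varepsilon_0 s_j^2$, forcing $1-f^j_{k,n}(s)\geq\gamma$). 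You instead replace the Hessian remainder by the exact quantity $(A_k w_{k+1})_j - w_k^{(j)} = \mathbb{E}[h({}_jX_k)]$ and estimate it via the Bernoulli-thinning picture, and the same picture gives the one-step norm comparison $\varepsilon_0\|w_{k+1}\| \leq \|w_k\| \leq d\sqrt{K_0}\,\|w_{k+1}\|$ in one stroke, with no case analysis. Your route buys two things: it never needs to locate $\eta_{k+1,n}$, so Assumption 2 is not used at all --- your argument proves the lemma under Assumptions 1 and 3 alone, consistent with Proposition~\ref{Propabc} itself --- and it avoids justifying uniform positive-definiteness of $Hg_k$ away from the boundary. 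What the paper's formulation buys is that its pointwise, monotonicity-based control of $Hg_k(s)$ is exactly what gets reused later (in \eqref{bigsum} and Lemma~\ref{lemmab}), where mere two-sided comparability is not enough and the evaluation point must be sandwiched between $f_{k,n}(s)$ and $\mathbf{1}$ to extract the precise asymptotics $\alpha(n,s)/\Gamma_n \to 1$; your exact-identity trick, while cleaner here, would need additional work (e.g., showing $\mathbb{E}[h({}_jX_k)] \sim \tfrac12 w_{k+1}^T Hg_k^j(\mathbf{1})w_{k+1}$ via uniform integrability) to serve that later purpose.
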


Using Lemma \ref{lemma1} in equation  (\ref{hhk}), we get 
\[
 \langle \mathbf{1} - f_{0,n}(\mathbf{0}) , u_{0} \rangle  \leq  \Big(\frac{1}{\tilde{\Lambda}_{n}} +  \frac{\Xi_n}{C}  \Big)^{-1}.
\]
Therefore,
\[
 \langle \mathbf{1} - f_{0,n}(\mathbf{0}) , u_{0} \rangle \leq \frac{C}{\Xi_n}. 
\]
 We note that that $1 - f_{0,n}^j(\mathbf{0}) = \mathbb{P}(Z_n \neq \mathbf{0} | Z_0 = e_j)$ 
 for each $j \in S$, and hence,  
if $\lim_{n \rightarrow \infty}\Xi_n  = \infty$, then 
\[
 \lim_{n \rightarrow \infty}\mathbb{P}(Z_n \neq \mathbf{0} | Z_0 = e_j) = 0.
\]
Thus, extinction occurs with probability one if the initial population is $e_j$. Therefore, since $j$ was arbitrary, extinction occurs with probability one for every
initial population.
\end{proof} 
%

\begin{proof}[Proof of Lemma \ref{lemma1}]  
The statement will follow if we prove the following bounds on the terms in the sums for  
$\alpha(n,s)$ and $\Xi_n$: for each $0 \leq k \leq n-1$ and $s \in [0,1]^d  \setminus \{ \mathbf{1} \} $, we have 
\begin{equation} \label{ceq}
\frac{1}{ C \Lambda_{k+1}}
\leq \frac{\langle (\mathbf{1} - f_{k+1,n}(s))^THg_{k}(\eta_{k+1,n})(\mathbf{1} - f_{k+1,n}(s)) , u_{k} \rangle}{\tilde{\Lambda}_{k+1}\langle (\mathbf{1} - f_{k+1,n}(s)) ,u_{k+1} \rangle \langle \mathbf{1} - f_{k,n}(s) , u_{k} \rangle }  \leq \frac{C}{\Lambda_{k+1}}.
\end{equation}
By Proposition \ref{Propabc}(e),
in order to prove (\ref{ceq}), it is enough to show that there exists an $L > 0$ such that
\begin{equation}\label{eqx}
\frac{1}{L} \leq \frac{\langle (\mathbf{1} - f_{k+1,n}(s))^THg_{k}(\eta_{k+1,n})(\mathbf{1} - f_{k+1,n}(s)) , u_{k} \rangle}{\langle \mathbf{1} - f_{k+1,n}(s) ,u_{k+1} \rangle \langle \mathbf{1} - f_{k,n}(s) , u_{k} \rangle } \leq L.
\end{equation}

Now, we know that  $f^j_{k,n}(\mathbf{0}) \leq f^j_{k,n}(s) \leq \eta_{k+1,n} (j) \leq 1$ for each $k$ and $ j \in S$. Also, $f^j_{k,n}(\mathbf{0})  = \mathbb{P}(Z_n = \textbf{0}| Z_k = e_j) \geq \varepsilon_0$
for each $k \leq n-1$, and thus $\varepsilon_0  \leq \eta_{k+1,n} (j) \leq 1$ for each $k \leq n-1$ and $ j \in S$.
Thus, by Assumptions 1-3, there exists a constant $c_1 > 0$ such that for each vector $\zeta$ with non-negative components, we have
\[
\frac{1}{c_1}\|\zeta\|^2 \leq \langle \zeta^THg_{k}(\eta_{k+1,n})\zeta , u_{k} \rangle \leq c_1\|\zeta\|^2.
\]
In particular, we have 
\begin{equation} \label{eqy}
\frac{1}{c_1}\|\mathbf{1} - f_{k+1,n}(s)\|^2 \leq \langle (\mathbf{1} - f_{k+1,n}(s))^THg_{k}(\eta_{k+1,n})(\mathbf{1} - f_{k+1,n}(s)) , u_{k} \rangle 
\leq c_1\|\mathbf{1} - f_{k+1,n}(s)\|^2.
\end{equation}

By Proposition~\ref{Propabc}, for each $ 0 \leq k \leq n- 1$, 
\[
\bar{\varepsilon}\|\mathbf{1} - f_{k,n}(s)\| \leq \langle \mathbf{1} - f_{k,n}(s) , u_{k} \rangle \leq \|\mathbf{1} - f_{k,n}(s)\|.
\]
In order to prove (\ref{eqx}), it is sufficient to prove that there exists a constant $c_2 >0$ such that for each $0 \leq k \leq n-1$ and each $s \in [0,1]^d 
\setminus \{ \mathbf{1} \}$,
\[
\frac{1}{c_2} \leq \frac{\|\mathbf{1} - f_{k+1,n}(s)\|}{\|\mathbf{1} - f_{k,n}(s)\|}   \leq c_2.
\]
The first inequality $\|\mathbf{1} - f_{k,n}(s)\| \leq c_2 \|\mathbf{1} - f_{k+1,n}(s)\|$ follows from the fact that  
\[
\|\mathbf{1} - f_{k,n}(s)\| = \|g_{k}(\mathbf{1}) - g_k(f_{k+1,n}(s))\| \leq c_2 \|\mathbf{1} - f_{k+1,n}(s)\|,
\]
since $g_k$ is uniformly Lipschitz due to Assumption 3.

We observe that by Assumptions 1-3, each entry of the matrix $A_k$ is uniformly bounded from above and below, i.e., there exist positive constants $r$ and $R$ such that, for each $i,j \in S$,
\[
r \leq A_k(i,j) \leq R.
\]
To prove the second inequality $\|\mathbf{1} - f_{k+1,n}(s)\| \leq c_2 \|\mathbf{1} - f_{k,n}(s)\|$, we consider the following two cases:\\

(CASE I) $\|\mathbf{1} - f_{k+1,n}(s)\|  \leq r\bar{\varepsilon}d/c_1$. Then, from equation (\ref{eqy}) and Proposition \ref{Propabc}, 
\[
\langle (\mathbf{1} - f_{k+1,n}(s))^THg_{k}(\eta_{k+1,n})(\mathbf{1} - f_{k+1,n}(s)), u_k \rangle  \leq 
c_1\|\mathbf{1} - f_{k+1,n}(s)\|^2.
\]
\[
\leq  r\bar{\varepsilon}d \|\mathbf{1} - f_{k+1,n}(s)\| \leq \langle A_k (\mathbf{1} - f_{k+1,n}(s)), u_k \rangle,  
\]
and thus, substituting the above relation into the Taylor formula,
\[
\langle \mathbf{1} - f_{k,n}(s) , u_{k} \rangle  = \langle A_{k}
(\mathbf{1} - f_{k +1,n}(s)) , u_{k} \rangle - \frac{1}{2}\langle(\mathbf{1} - 
f_{k+1,n}(s))^TH  g_{k} (\eta_{k + 1,n})(\mathbf{1} - f_{k +1,n}(s)) , u_{k} \rangle,
\]
we get, 
\[
\langle \mathbf{1} - f_{k,n}(s) , u_{k} \rangle  \geq \langle A_{k}(\mathbf{1} - f_{k +1,n}(s)) , u_{k} \rangle /2, 
\]
and thus, 
\[
\|\mathbf{1} - f_{k,n}(s)\| \geq \langle \mathbf{1} - f_{k,n}(s) , u_{k} \rangle \geq  \langle A_{k}(\mathbf{1} - f_{k +1,n}(s)) , u_{k} \rangle /2 \geq 
r \bar{\varepsilon}\|\mathbf{1} - f_{k+1,n}(s)\|/2.
\] 
So, for $\tilde{c_2} = 2/( r \bar{\varepsilon}) $, we have 
\[
\|\mathbf{1} - f_{k+1,n}(s)\| \leq \tilde{c_2} \|\mathbf{1} - f_{k,n}(s)\|. 
\]

(CASE II) Now suppose that $ {1}  - f^j_{k+1,n}(s) > r\bar{\varepsilon}/c_1$ for some $j \in S$. 
We want to prove that there exists a $\gamma >0$ such that $ {1}  - f^j_{k,n}(s) \geq \gamma$. 
From Assumptions 1-2, for each $j  \in S$,
\[
g_k^j(s) = \mathbb{E}\Big(\prod_{i = 1}^{d} s_i^{Z_{k+1}(i)} | Z_k = e_j\Big) \leq (1 - \varepsilon_0) + \varepsilon_0  s_j^2, 
\]
and thus, since $f^j_{k+1,n}(s) < 1 - r\bar{\varepsilon}/c_1$, 

\[
f_{k,n}^j(s) = g_k^j(f_{k+1,n}(s)) \leq (1 - \varepsilon_0) + \varepsilon_0 (1 - r\bar{\varepsilon}/c_1)^2 < 1, 
\]
where the last inequality holds since $0 < r\bar{\varepsilon}/c_1 < 1$. 
Setting $\gamma =  \varepsilon_0 - \varepsilon_0 (1 - r\bar{\varepsilon}/c_1)^2$, we obtain
\[
{1}  - f^j_{k,n}(s) > \gamma,  
\]
which is the required inequality.

So, from the two cases above, we can define $c_2 = \max(\tilde{c_2},  d/\gamma )$ to get,  for each $0 \leq k \leq n-1$ 
and $s \in [0,1]^d  \setminus \{ \mathbf{1}\} $,
\[
 \|\mathbf{1} - f_{k+1,n}(s)\| \leq c_2 \|\mathbf{1} - f_{k,n}(s)\|. 
\]
\end{proof}


\begin{proof}
[Proof of Lemma~\ref{elesst}] Let $\bar{u} = \mathbb{E} Z_1$. By Assumptions 1-3, for every initial population, there is a constant $c > 0$ such that $c^{-1} u_1 \leq \bar{u} \leq c u_1$, where inequality between vectors is understood as the inequality 
between their components. Then, since $ M_{1,n}^T \bar{u} =  \mathbb{E} Z_n$,
\[
c^{-1} M_{1,n}^T u_1 \leq  \mathbb{E} Z_n \leq c M_{1,n}^T u_1.
\]
 Taking the norm and using the fact that $M_{1,n}^T u_1 = (\Lambda_n/\Lambda_1) u_n$, we obtain (\ref{qwq1}). 

From (\ref{hhk}) with  $s = \mathbf{0}$, using the fact that $1 - f_{0,n}^j(\mathbf{0}) = \mathbb{P}(Z_n \neq \mathbf{0} | Z_0 = e_j)$, we obtain
\[
\frac{1}{C}  \Big(\frac{1}{\tilde{\Lambda}_{n}} + \alpha(n,s)\Big)^{-1} \leq \mathbb{P}(_jZ_n \neq \mathbf{0} )  \leq C  \Big(\frac{1}{\tilde{\Lambda}_{n}} + \alpha(n,s)\Big)^{-1}.
\]
Using Lemma~\ref{lemma1} and the first estimate in part (e) of Proposition~\ref{Propabc}, we obtain, for a different constant $C$,
\[
\frac{1}{C}  \Big(\frac{1}{{\Lambda}_{n}} + \Xi_n \Big)^{-1} \leq \mathbb{P}(_jZ_n \neq \mathbf{0} )  \leq C  \Big(\frac{1}{{\Lambda}_{n}} +\Xi_n \Big)^{-1}.
\]
Since this is valid for every $j$, we have the same inequality for an arbitrary initial population (with a constant $C$ that depends on the initial population). 
Since $\Lambda_n \Xi_n \geq 1$, this implies~(\ref{qwq2}).  
\end{proof}

\section{Convergence of the process conditioned on survival} \label{sec3}

The following series will be important to our analysis,
\begin{equation} \label{degam} 
\Gamma_n = \frac{1}{2} \sum_{k = 0}^{n-1}\frac{1}{\lambda_k\tilde{\Lambda}_{k+1}}\frac{\langle v_{k+1}^THg_{k}(\mathbf{1})v_{k+1} , u_{k} \rangle}{\langle v_{k+1} ,u_{k+1} \rangle \langle v_{k}, u_{k} \rangle}.
\end{equation}
Here $H$ denotes the Hessian matrix. It is applied to each component of $g_k$ separately, then multiplied by vectors $v_{k+1}^T$ and $v_{k+1}$ to
get scalars, which are then multiplied by the corresponding components of $u_k$ to form the scalar product in the numerator. Since all terms in the right side of (\ref{degam}) are positive, the sequence $\Gamma_n$ is increasing.  In each term in~(\ref{degam}), each of the 
factors, $\lambda_k$, $\langle v_{k+1}^THg_{k}(\mathbf{1})v_{k+1} , u_{k} \rangle$, $\langle v_{k+1} ,u_{k+1} \rangle$, and $\langle v_{k}, u_{k} \rangle$, is bounded 
from above and below uniformly in $k$ by Assumptions 1-3 and Proposition~\ref{Propabc}. Therefore, 
by Proposition~\ref{Propabc}, there is a positive constant $C$ such that
\begin{equation} \label{esnnze}
\frac{1}{C \Lambda_{k+1}} \leq  \frac{1}{2} \frac{1}{\lambda_k\tilde{\Lambda}_{k+1}}\frac{\langle v_{k+1}^THg_{k}(\mathbf{1})v_{k+1} , u_{k} \rangle}{\langle v_{k+1} ,u_{k+1} \rangle \langle v_{k}, u_{k} \rangle} \leq \frac{C}{\Lambda_{k+1}}~,
\end{equation}
and consequently,
\begin{equation} \label{esnn}
\frac{\Xi_n}{C} = \frac{1}{C} \sum_{k = 0}^{n-1}\frac{1}{\Lambda_{k+1}} \leq \Gamma_n   \leq  C \sum_{k = 0}^{n-1}\frac{1}{\Lambda_{k+1}}  =
C \Xi_n.
\end{equation}
Assumptions 5 and 6 now can be rewritten as 
\begin{equation} \label{aeit}
\Gamma_n \rightarrow \infty,~~\Lambda_n \Gamma_n \rightarrow \infty~~{\rm as}~n \rightarrow \infty. 
\end{equation}

The proof of Theorem~\ref{the2}  will rely on the following seemingly weaker statement. 
\begin{thm} \label{nnn} Under Assumptions 1-6, for each $j \in S$, we have the following limit in distribution
 \[
\frac{ \langle _j\zeta_n, u_n\rangle}{  \mathbb{E} \langle _j\zeta_n, u_n\rangle} \rightarrow \xi ~~{\it as}~n \rightarrow \infty,
 \]
where $\xi$ is an exponential random variable with parameter one.
\end{thm}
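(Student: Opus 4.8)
The plan is to prove Theorem~\ref{nnn} through pointwise convergence of Laplace transforms, using that a sequence of nonnegative random variables converges in distribution to an $\mathrm{Exp}(1)$ variable iff its Laplace transforms converge to $1/(1+\theta)$ for every $\theta>0$. Fix $j\in S$ and $\theta>0$, set $b_n=\mathbb{E}\langle{}_j\zeta_n,u_n\rangle$, and take the vector $s=s(n)$ with components $s_i=e^{-\theta u_n(i)/b_n}$. Conditioning on $\{Z_n\neq\mathbf{0}\}$ and using that $\langle Z_n,u_n\rangle$ vanishes exactly on $\{Z_n=\mathbf{0}\}$ gives the identity
\[
\mathbb{E}\big(e^{-\theta\langle{}_j\zeta_n,u_n\rangle/b_n}\big)=\frac{f^j_n(s)-f^j_n(\mathbf{0})}{1-f^j_n(\mathbf{0})}=1-\frac{1-f^j_{0,n}(s)}{1-f^j_{0,n}(\mathbf{0})},
\]
so it suffices to show the final ratio tends to $\theta/(1+\theta)$. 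Note that $b_n\to\infty$ (this is essentially Assumption~6, via $\mathbb{E}\|Z_n\|\asymp\Lambda_n$ and $\mathbb{P}(Z_n\neq\mathbf{0})\asymp1/\Xi_n$), so $s(n)\to\mathbf{1}$ and $\langle\mathbf{1}-s,u_n\rangle=\frac{\theta}{b_n}\langle u_n,u_n\rangle(1+o(1))$.

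The next step is to pass from the single coordinate $j$ to the $u_0$-weighted scalar, for which the master formula~\eqref{hhk} is available. Both $\mathbf{1}-f_{0,n}(s(n))$ and $\mathbf{1}-f_{0,n}(\mathbf{0})$ are, for large $n$, asymptotically proportional to $v_0$: the backward passage from time $n$ to time $0$ has length $n\to\infty$, and weak ergodicity (Proposition~\ref{Propabc}(d), in its nonlinear form, together with the two-sided bounds $\|\mathbf{1}-f_{k,n}\|\asymp\|\mathbf{1}-f_{k+1,n}\|$ from the proof of Lemma~\ref{lemma1}) pins the direction to $v_0$. Hence the common factor $v_0(j)/\langle v_0,u_0\rangle$ cancels and
\[
\frac{1-f^j_{0,n}(s)}{1-f^j_{0,n}(\mathbf{0})}=\frac{\langle\mathbf{1}-f_{0,n}(s),u_0\rangle}{\langle\mathbf{1}-f_{0,n}(\mathbf{0}),u_0\rangle}\,(1+o(1)).
\]
The same alignment together with $M_n^{T}u_0=\tilde{\Lambda}_nu_n$ (whence $\langle M_nu_n,u_0\rangle=\tilde{\Lambda}_n\langle u_n,u_n\rangle$) evaluates the normalization: writing $q_n=\langle u_n,u_n\rangle$ and using~\eqref{hhk} at $s=\mathbf{0}$,
\[
b_n=\frac{(M_nu_n)_j}{1-f^j_{0,n}(\mathbf{0})}=q_n\big(1+\tilde{\Lambda}_n\,\alpha(n,\mathbf{0})\big)(1+o(1)).
\]
Substituting~\eqref{hhk} at $s(n)$ and at $\mathbf{0}$ and the last display into the ratio, the factors $q_n$ cancel, the numerator is $\frac{1}{\tilde{\Lambda}_n}+\alpha(n,\mathbf{0})$, and the denominator becomes $\frac{1}{\tilde{\Lambda}_n\theta}+\frac{\alpha(n,\mathbf{0})}{\theta}+\alpha(n,s(n))$.

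The crux is to show $\alpha(n,s(n))=\alpha(n,\mathbf{0})(1+o(1))$. Granting this, since $\tilde{\Lambda}_n\,\alpha(n,\mathbf{0})\asymp\Lambda_n\Gamma_n\to\infty$ by Assumption~6 (in the form~\eqref{aeit}), the $1/\tilde{\Lambda}_n$ terms are negligible against $\alpha(n,\mathbf{0})$, and the ratio tends to $\alpha(n,\mathbf{0})/\big(\alpha(n,\mathbf{0})(1+\theta)/\theta\big)=\theta/(1+\theta)$, which is exactly what is needed. To obtain $\alpha(n,s(n))\sim\alpha(n,\mathbf{0})$ I would show that each of them is asymptotic to $\Gamma_n$. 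Split the sum~\eqref{lll} at $k=n-m$. For the bulk indices $k\leq n-m$ the backward evolution has run for at least $m$ steps, so $\eta_{k+1,n}\to\mathbf{1}$ and the directions of $\mathbf{1}-f_{k+1,n}(\cdot)$ and $\mathbf{1}-f_{k,n}(\cdot)$ converge (uniformly in the argument) to $v_{k+1}$ and $v_k$; using $A_kv_{k+1}=\lambda_kv_k$ and $\|\mathbf{1}-f_{k,n}\|/\|\mathbf{1}-f_{k+1,n}\|\to\lambda_k$, each bulk term of~\eqref{lll} converges to the matching term of~\eqref{degam} with a relative error $\varepsilon(m)\to0$. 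For the tail indices $n-m<k\leq n-1$ each term is $\asymp1/\Lambda_{k+1}$, and since $\lambda_i\in(a^{-1},a)$ the whole tail is $O(m\,a^{m}/\Lambda_n)=o(\Gamma_n)$ by Assumption~6. Letting $n\to\infty$ and then $m\to\infty$ yields $\alpha(n,s(n))/\Gamma_n\to1$ and likewise $\alpha(n,\mathbf{0})/\Gamma_n\to1$.

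I expect this last step to be the main obstacle: one must verify, uniformly over the argument $s\in\{s(n),\mathbf{0}\}$, that the bulk of $\alpha(n,\cdot)$ reproduces $\Gamma_n$ while the tail is negligible. The delicate feature is that the window of indices over which $f_{k,n}(s(n))$ and $f_{k,n}(\mathbf{0})$ genuinely differ is governed not by $n-k$ but by the residual survival series $\Lambda_k\sum_{i>k}1/\Lambda_i$; the argument must be arranged so that this window contributes only to the $o(\Gamma_n)$ tail, and it is precisely here that Assumption~6 ($\Lambda_n\Gamma_n\to\infty$) is indispensable. The nonlinear directional convergence to $v_k$ (the second technical pillar) is a refinement of Proposition~\ref{Propabc}(d) in which the quadratic corrections are controlled by Assumption~3 and the magnitude comparisons already proved for Lemma~\ref{lemma1}.
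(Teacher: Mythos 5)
Your route is the same as the paper's: convergence of Laplace transforms, the master identity \eqref{eq1z}/\eqref{hhk}, alignment of $\mathbf{1}-f_{0,n}(\cdot)$ with $v_0$ (the paper's Lemma~\ref{lemmaa}), and the uniform asymptotics $\alpha(n,s)\sim\Gamma_n$ (the paper's Lemma~\ref{lemmab}); your normalization computation $b_n=q_n\bigl(1+\tilde{\Lambda}_n\alpha(n,\mathbf{0})\bigr)(1+o(1))$ is a streamlined combination of the paper's Lemmas~\ref{lemmac} and~\ref{lemmaz}, and your final algebra reducing everything to $\alpha(n,s(n))\sim\alpha(n,\mathbf{0})$ and $\tilde{\Lambda}_n\alpha(n,\mathbf{0})\to\infty$ is correct.

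The genuine gap is in the crux step, exactly where you predicted it. Your argument for $\alpha(n,s(n))\sim\Gamma_n\sim\alpha(n,\mathbf{0})$ splits the sum \eqref{lll} at $k=n-m$ with $m$ fixed and asserts that for all bulk indices $k\le n-m$ one has $\eta_{k+1,n}\approx\mathbf{1}$ and alignment of $\mathbf{1}-f_{k,n}(\cdot)$ with $v_k$. This is false: alignment and the closeness of $\eta_{k+1,n}$ to $\mathbf{1}$ require $\mathbf{1}-f_{k,n}(s)$ to be small, and since $1-f^i_{k,n}(\mathbf{0})=\mathbb{P}(Z_n\neq\mathbf{0}\mid Z_k=e_i)\ge\varepsilon_0^{\,n-k}$ by Assumption~1, for $k$ just below $n-m$ this quantity is bounded away from zero uniformly in $n$, so linearization fails on part of your bulk. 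Moreover, the window of indices where it fails is governed by the residual series $\bigl(\sum_{l=k}^{n}\Lambda_k/\Lambda_l\bigr)^{-1}$ and can have length growing with $n$, so your tail estimate $O(m\,a^m/\Lambda_n)$, which only covers the last $m$ indices, cannot absorb it. You flag this difficulty yourself in the closing paragraph, but flagging is not fixing. The paper's repair is the $n$-dependent cut $J(n,\varepsilon)=\min\{k:\,1-f^i_{k,n}(\mathbf{0})>\varepsilon\text{ for some }i\}$: alignment (Lemma~\ref{lemmaa}) is proved only for $k\le J(n,\varepsilon)-K$, and the entire remaining block $J(n,\varepsilon)-K-1\le k\le n-1$, of unbounded length, is controlled by restarting the master identity at time $J(n,\varepsilon)$, which gives
\[
\Gamma_n-\Gamma_{J(n,\varepsilon)}\le \frac{C}{\tilde{\Lambda}_{J(n,\varepsilon)}\,\varepsilon\,\bar{\varepsilon}},
\qquad\text{hence}\qquad
\frac{\Gamma_n-\Gamma_{J(n,\varepsilon)}}{\Gamma_n}\le \frac{C}{\tilde{\Lambda}_{J(n,\varepsilon)}\,\Gamma_{J(n,\varepsilon)}\,\varepsilon\,\bar{\varepsilon}}\to 0
\]
by Assumption~6 combined with $J(n,\varepsilon)\to\infty$, and similarly for the corresponding piece of $\alpha(n,s)$ via \eqref{ceq}. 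Without this step (or an equivalent quantitative control of the long near-$n$ window), your proof of the uniform statement $\alpha(n,s)/\Gamma_n\to1$ — and hence the whole theorem — is incomplete.
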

\begin{proof} The proof will rely on several lemmas, which will be formulated as needed. The proofs of these lemmas will be given in the end of this section.  It is sufficient to show convergence of moment generating functions. That is, we want to prove that for each $\varkappa \in  [0,\infty)$
\[
 \mathbb{E} \Big(\exp\Big({\frac{-\varkappa\langle _jZ_n, u_n\rangle \mathbb{P}(_jZ_n \neq \mathbf{0})}{\mathbb{E}(\langle _jZ_n , u_n\rangle)}}\Big) \Big|\; _jZ_n \neq \mathbf{0}\Big) \rightarrow \frac{1}{1 + \varkappa}~~~{\rm as}~n \rightarrow \infty.
\]
Let us define vectors $\bar{s}_j $ such that the $i$-th component of $\bar{s}_j $ is 
\[
\bar{s}_j (i) = \exp\left({\frac{-\varkappa u_n(i)\mathbb{P}(_jZ_n \neq \mathbf{0})}{\mathbb{E}(\langle _jZ_n , u_n\rangle)}}\right).
\]
Then the $j$-th component of the vector $f_n(\bar{s}_j )$ is equal to
\[
f_n^j(\bar{s}_j ) = \mathbb{E} \Big(\exp\Big({\frac{-\varkappa \langle _jZ_n, u_n\rangle 
\mathbb{P}(_jZ_n \neq \mathbf{0})}{\mathbb{E}(\langle _jZ_n , u_n\rangle)}}\Big)\Big). 
\] 
Thus we want to show that
\begin{equation} \label{ew1}
  1 - \frac{1 - f_n^j(\bar{s}_j )}{\mathbb{P}(_jZ_n \neq \mathbf{0})} \rightarrow \frac{1}{1 + \varkappa }~~{\rm as}~n \rightarrow \infty. 
\end{equation}

In order to prove (\ref{ew1}), it will be useful to study the asymptotic behavior of the sum on the right hand side of (\ref{eq1z}). We first find the
upper and lower bounds of the sum using the upper and lower bounds for $\eta_{k,n}$.
Observe that $Hg_{k}^j(s)$ is monotonic in $s$ for each $j$ since $g_k^j$ is a polynomial  with non-negative coefficients and $Hg_{k}^j(s)$ is a matrix with entries that are mixed second derivatives of $g_k^j$. Therefore,
\eqref{eq1z} gives
\[
  \Big(\frac{1}{\tilde{\Lambda}_{n}\langle \mathbf{1}-s , u_n\rangle} + \frac{1}{2} \sum_{k = 0}^{n-1} \frac{\langle (\mathbf{1} - f_{k+1,n}(s))^THg_{k}(\mathbf{1})(\mathbf{1} - f_{k+1,n}(s)) , u_{k} \rangle}{\tilde{\Lambda}_{k+1}\langle (\mathbf{1} - f_{k+1,n}(s)) ,u_{k+1} \rangle \langle \mathbf{1} - f_{k,n}(s) , u_{k} \rangle }\Big)^{-1}
\]
\begin{equation} \label{bigsum}
\leq \langle \mathbf{1} - f_{0,n}(s) , u_{0} \rangle 
\end{equation}
\[
\leq \Big(\frac{1}{\tilde{\Lambda}_{n}\langle \mathbf{1}-s , u_n\rangle} + \frac{1}{2}\sum_{k = 0}^{ n-1} \frac{\langle (\mathbf{1} - f_{k+1,n}(s))^THg_{k}(f_{k,n}(s))(\mathbf{1} - f_{k+1,n}(s)) , u_{k} \rangle}{\tilde{\Lambda}_{k+1}\langle (\mathbf{1} - f_{k+1,n}(s)) ,u_{k+1} \rangle \langle \mathbf{1} - f_{k,n}(s) , u_{k} \rangle }\Big)^{-1}.
\]
Let us briefly explain the idea for the next step. Assume that $K$ is such that $n -K$ is large and $f_{k,n}(s)$ is close to $\mathbf{1}$ for $k \leq K+1$. By formally linearizing the mappings $g_k$, $g_{k+1}$,...,$g_K$, we write
\begin{equation}
\label{FKsteps}
 \mathbf{1} - f_{k,n}(s) \approx A_{k}A_{k+1}\cdots A_{K}(\mathbf{1} - f_{K+1,n}(s)).
\end{equation}
We know that
\[
 v_k =   A_{k}\frac{v_{k+1}}{\lambda_{k}},
\]
and thus
\begin{equation}
\label{VKsteps}
 v_k =  A_{k} A_{k+1}\cdots A_{K}\frac{v_{K+1}}{\prod_{i = k}^{K}\lambda_{i}}.
\end{equation}
Note the similarity in the expressions 
\eqref{FKsteps} and \eqref{VKsteps}:
the same product of matrices is applied, albeit to different vectors.
 Proposition $\ref{Propabc}$$(d)$ (contractive property of the matrices)  implies that the resulting expressions will be aligned in the same direction if $K - k$ is sufficiently large. That is we can replace
$\mathbf{1} - f_{k,n}(s)$ (and $\mathbf{1} - f_{k+1,n}(s)$) by the vectors $c_{k,n} v_k$ (and $c_{k+1,n} v_{k+1}$) in each of the terms in the sums in (\ref{bigsum}) for all $k$ that are sufficiently far away from $n$, where $c_{k,n}$ satisfy the relation ${c_{k,n}}/{c_{k+1,n}} = \lambda_k$. This will simplify (\ref{bigsum}). 

Now let us make the above arguments rigorous.
For a given $\varepsilon > 0$ and a positive integer $n$, we define $J(n,\varepsilon)$ as follows,
\[
J(n,\varepsilon) = \min\{k: 1 - f^i_{k,n}(\textbf{0}) > \varepsilon \text{ for some } i \in S\}.
\]
\begin{lem} \label{lemmaa}
 For each $\varepsilon' > 0 $, there exist a natural number $K$ and an $\varepsilon > 0 $ such that, 
for each $s \in [0,1]^d  \setminus \{ \mathbf{1} \} $, 
\begin{equation} \label{xzx}
\mathbf{1} - f_{k,n}(s) = c_{k,n}(v_{k} + \delta_{k,n}),
\end{equation}
 where $\delta_{k,n}$ and $c_{k,n}$ depend on $s$ and satisfy  $\|\delta_{k,n}\| \leq \varepsilon'$  and $|({c_{k,n}}/{c_{k+1,n}}) - \lambda_k| \leq \varepsilon'$ for each $0 \leq k \leq J(n,\varepsilon) - K$ and each $n$.
\end{lem}

\
\\
Note that $J(n, \varepsilon) \rightarrow \infty$ as $n \rightarrow \infty$  since each component of the vector
$\mathbf{1} - f_{k,n}(\textbf{0})$ is 
\[
1 - f_{k,n}^i(\textbf{0}) = \mathbb{P}(Z_n \neq \textbf{0}| Z_k = e_i)
\]
and  $\mathbb{P}(Z_n \neq \textbf{0}| Z_k = e_i) \rightarrow 0$ as  $n  \rightarrow \infty$ for each $i$ and each $k$
by Lemma~\ref{ljj3}. 
\\

Recall the definition of $\alpha(n,s)$ from (\ref{lll}).
\begin{lem} \label{lemmab} Under Assumptions 1-6,  
\[
\lim_{n \rightarrow \infty} \frac{\alpha(n,s)}{\Gamma_n} = 1,
\]
uniformly in $s \in [0,1]^d  \setminus \{ \mathbf{1} \} $. 
\end{lem}
\
\\
Let us return to the proof of (\ref{ew1}). 
By Lemma \ref{lemmaa}, when $n$ is large, the vector $\mathbf{1}-f_{n}(s) = \mathbf{1}-f_{0,n}(s)$ is nearly aligned to the vector $v_0$. Thus, in (\ref{ew1}) we can replace the $j$-th component of the vector $\mathbf{1}-f_{n}(\bar{s}_j )$ by
 \[
  \langle \mathbf{1} - f_n(\bar{s}_j )  , u_0 \rangle \frac{\langle v_0, e_j\rangle}{\langle v_0 , u_0 \rangle}.
 \]
Therefore, in order to prove (\ref{ew1}), it is sufficient to show that 
 \[
  1 - \frac{\langle v_0, e_j\rangle}{\langle v_0 , u_0 \rangle\mathbb{P}(_jZ_n \neq \mathbf{0})}
  \left(\frac{1}{\tilde{\Lambda}_{n}\varkappa \Big(\frac{\mathbb{P}(_jZ_n \neq \mathbf{0})}{\mathbb{E}(\langle _jZ_n , u_n\rangle)} 
  +o\left(\frac{\mathbb{P}(_jZ_n \neq \mathbf{0})}{\mathbb{E}(\langle _jZ_n , u_n\rangle)}\right)\Big)\langle u_n , u_n\rangle} + \Gamma_n\right)^{-1} \rightarrow \frac{1}{1+ \varkappa },
 \]
where we used Lemma~\ref{lemmab} to transform (\ref{hhk}) and linearized $\textbf{1} - \bar{s}_j $. 
The LHS can be written as
\begin{equation} \label{ey}
   1 - \frac{\langle v_0, e_j\rangle}{\langle v_0 , u_0 \rangle\mathbb{P}(_jZ_n \neq \mathbf{0})\Gamma_n}
   \left(\frac{1}{\tilde{\Lambda}_{n}\Gamma_n\varkappa \Big(\frac{\mathbb{P}(_jZ_n \neq \mathbf{0})}{\mathbb{E}(\langle _jZ_n , u_n\rangle)} +o\left(\frac{\mathbb{P}(_jZ_n \neq \mathbf{0})}{\mathbb{E}(\langle _jZ_n , u_n\rangle)}\right)\Big)
   \langle u_n , u_n\rangle} + 1 \right)^{-1}.
\end{equation}
We will need  the following two lemmas.
 \begin{lem} \label{lemmac} Under Assumptions 1-6,
 \[
  \lim_{n \rightarrow \infty}   \frac{\langle v_0 , u_0 \rangle\mathbb{P}(_jZ_n \neq \mathbf{0})\Gamma_n}{\langle v_0, e_j\rangle} = 1.
 \]
 \end{lem}

\begin{lem} \label{lemmaz}Under Assumptions 1-6, 
\[
 \lim_{n \rightarrow \infty} \tilde{\Lambda}_{n}\Gamma_n \frac{\mathbb{P}(_jZ_n \neq \mathbf{0})  \langle u_n , u_n\rangle}{\mathbb{E}(\langle _jZ_n , u_n\rangle) } = 1.
\]
\end{lem}
\
\\
Applying the above two lemmas to transform the expression in (\ref{ey}), we obtain
\[
\lim_{n \rightarrow \infty} \left(1 - \frac{\langle v_0, e_j\rangle}{\langle v_0 , u_0 \rangle\mathbb{P}(_jZ_n \neq \mathbf{0})\Gamma_n}\left(\frac{1}{\tilde{\Lambda}_{n}\Gamma_n\varkappa \Big(\frac{\mathbb{P}(_jZ_n \neq \mathbf{0})}{\mathbb{E}(\langle _jZ_n , u_n\rangle)} +
o\left(\frac{\mathbb{P}(_jZ_n \neq \mathbf{0})}{\mathbb{E}(\langle _jZ_n , u_n\rangle)}\right)\Big)\langle u_n , u_n\rangle} + 1 \right)^{-1}\; \right) 
\]
\[
=  1 - \Big(\frac{1}{\varkappa } + 1\Big)^{-1} = \frac{1}{1 + \varkappa } .
\]
This completes the proof of Theorem~\ref{nnn}. 
\end{proof}

\begin{proof}[Proof of Theorem~\ref{the2}]  
First, let Assumptions 1--6 be satisfied. Let $\cP: v \rightarrow v/\|v\|$ be the projection onto the
unit sphere, with the convention that $\cP(\mathbf{0}) = \mathbf{0}$. 
We claim that
\begin{equation} \label{colli}
\lim_{n \rightarrow \infty} \| \cP (\mathbb{E} _j \zeta_n) - u_n \| = 0,
\end{equation}
\begin{equation} \label{colli2}
\lim_{n \rightarrow \infty} \mathbb{P} (\| \cP \left( _j \zeta_n \right)- u_n \| > \varepsilon) = 0
\end{equation}{sec3}
for each $j$ and  each $\varepsilon > 0$. Let us fix $\delta \in (0, \varepsilon)$.  By Proposition~\ref{Propabc}, we can find $k' \in \mathbb{N}$ such that
\begin{equation} \label{cojjj}
(1-{\delta})u_{n+k'} \leq \frac{M^T_{n, n+k'} u}{\|M^T_{n, n+k'} u\|} \leq (1+{\delta})u_{n+k'}
\end{equation}
whenever $u$ is a non-zero vector with non-negative components.  Let $_j\zeta^{k'}_n$ be the random vector obtained by taking 
$_j\zeta_n$ as the initial population of a branching process, then branching for $k'$ steps using 
 our original branching distributions $P_n,...,P_{n+k'-1}$
and evaluating the
resulting population. Note that $_j\zeta^{k'}_n$ is different from $_j\zeta_{n+k'}$, the latter can be obtained from $_j\zeta^{k'}_n$ by conditioning on the event
of non-extinction. Since the extinction of a large initial population in $k'$ steps occurs with small probability and since, by Theorem~\ref{nnn}, for each $a > 0$ we have  $\mathbb{P}(\|_j\zeta_n \| > a ) \rightarrow 1$ as $n \rightarrow \infty$, we obtain
\[
\lim_{n \rightarrow \infty} ( \mathbb{P} (\| \cP ( _j \zeta^{k'}_n)- u_{n+k'} \| > \varepsilon) -  
\mathbb{P} (\| \cP ( _j \zeta_{n+k'} )- u_{n+k'} \| > \varepsilon)) = 0.
\]
Also note that $\lim_{n \rightarrow \infty}(  \cP (\mathbb{E} _j \zeta^{k'}_n) - 
\cP (\mathbb{E} _j \zeta_{n+k'}) ) = 0$.  
Therefore, since $\delta > 0$ was arbitrarily small, (\ref{colli}) and  (\ref{colli2}) will follow if we show that
\begin{equation}
\label{ProjExp}
 \| \cP (\mathbb{E} _j \zeta^{k'}_n) - u_{n+k'} \| \leq \delta
\end{equation}
for all sufficiently large $n$ and
\begin{equation}
\label{ProjProb}
 \lim_{n \rightarrow \infty} \mathbb{P} (\| \cP ( _j \zeta^{k'}_n )- u_{n+k'} \| > \varepsilon) = 0.
\end{equation}
\eqref{ProjExp} immediately follows from (\ref{cojjj}). \eqref{ProjProb} is a consequence of
\[
 \lim_{n \rightarrow \infty} \mathbb{P} (\| \cP ( _j \zeta^{k'}_n) - \cP (\mathbb{E} _j \zeta^{k'}_n )\| > 
 \varepsilon - \delta ) = 0,
\]
which can be derived from the Chebyshev inequality since for each $a > 0$ we have  
$\mathbb{P}(\|_j\zeta_n \| > a ) \rightarrow 1$ as $n \rightarrow \infty$. Thus we have (\ref{colli}) and  (\ref{colli2}). 

Next, let us show that $(\ref{colli})$ and  (\ref{colli2}), along with Theorem $\ref{nnn}$, imply $(\ref{maineq})$ with $_j\zeta_n$ in place of $\zeta_n $. By  Theorem $\ref{nnn}$, it is sufficient to show that we have the following limit in probability
\begin{equation}\label{qq}
\frac{ \langle _j\zeta_n, u_n\rangle}{  \mathbb{E} \langle _j\zeta_n, u_n\rangle} - \frac{ \langle _j\zeta_n, u\rangle}{  \mathbb{E} \langle _j\zeta_n, u\rangle} \rightarrow 0 ~~~ \text{as} ~~~ {n \to \infty}.
\end{equation}

From $(\ref{colli})$ we know that 
$$
\lim_{n \to \infty} (  \langle\mathcal{P}(\mathbb{E} (_j\zeta_n)), u_n\rangle- \langle u_n , u_n \rangle  ) = 0
\quad\text{and}\quad
\lim_{n \to \infty} ( \langle\mathcal{P}(\mathbb{E} (_j\zeta_n)), u\rangle - \langle u_n , u \rangle) = 0.
$$
By \eqref{colli2} the two following limits hold in probability,
\[
\lim_{n \to \infty} ( \langle\mathcal{P}(_j\zeta_n), u_n\rangle - \langle u_n , u_n \rangle) = 0
\quad\text{and}\quad
\lim_{n \to \infty}  (\langle\mathcal{P}(_j\zeta_n), u\rangle - \langle u_n , u \rangle ) = 0.
\]
Therefore, the right hand side in the following equality
\[
\frac{ \langle _j\zeta_n, u_n\rangle}{  \mathbb{E} \langle _j\zeta_n, u_n\rangle} - \frac{ \langle _j\zeta_n, u\rangle}{  \mathbb{E} \langle _j\zeta_n, u\rangle} 
=\frac{||_j\zeta_n||}{||\EXP(_j\zeta_n)||}\Big( \frac{ \langle \mathcal{P} (_j\zeta_n), u_n\rangle}{   \langle\mathcal{P}(\mathbb{E} (_j\zeta_n)), u_n\rangle} - \frac{ \langle \mathcal{P} (_j\zeta_n), u\rangle}{    \langle\mathcal{P}(\mathbb{E} (_j\zeta_n)), u\rangle} \Big)
\]
tends to zero in probability (the factor in the brackets tends to zero in probability while the first factor is bounded in $L^1$). This justifies $(\ref{qq})$ and therefore  $(\ref{maineq})$ with $_j\zeta_n$ in place of $\zeta_n $.

If the initial population of the process $\{Z_n\}$ 
is such that we have more than one particle at time zero,
then we can consider a new process $\{Z'_n\}$ 
for which  $Z_0'=e_1$ and 
the transition distribution $P'_0$ is such that $_j Z'_1$ 
coincides in distribution
with $Z_1$. We also define $P'_n = P_n$ for $n \geq 1$. 
It is easy to see that the modified process satisfies Assumptions 1-4 
with possibly different values of $\eps_0$ and $K_0.$ On the other hand, 
${\langle \zeta_n,u \rangle}/{  \mathbb{E} \langle \zeta_n,u \rangle } $ 
is equal, in distribution,
to ${\langle _j\zeta'_n,u \rangle}/{  \mathbb{E} \langle _j\zeta'_n,u \rangle } $
 when $n \geq 1$, and therefore  (\ref{maineq}) holds for every initial population. 

Finally, suppose that Assumptions 1-5 are satisfied. If Assumption 6 fails, then  
$\mathbb{E} \|\zeta_n \| = \mathbb{E} \| Z_n \| /\mathbb{P} (Z_n \neq \mathbf{0})$ is bounded along a subsequence for every initial population. Then (\ref{maineq}) does not hold since $\zeta_n$ is
integer-valued, which gives a contradiction. 
\end{proof}

\begin{proof}
[Proof of Lemma \ref{CrJagers}]
From Proposition \ref{PrRarify} it follows that if $\{Z_n\}$ satisfies the assumptions of 
Lemma \ref{CrJagers}, 
 then there exists $l$ such that $\{\tilde{Z}_n\}=\{Z_{nl}\}$ satisfies assumptions 1-3. By Theorem \ref{yth}, which can be applied due to $(\ref{UniCrit})$, 
$\{\tilde{Z_n}\}$ goes extinct almost surely. This implies the almost sure extinction of $\{Z_n\}$.

Similarly, from Theorem \ref{the2}  (which can be applied since Assumptions 5 and 6 are met by the process $\{Z_{nl}\}$ due to $(\ref{UniCrit})$), it follows that \eqref{maineq} holds along a subsequence $nl$. 
To show that \eqref{maineq} holds (without restriction to a subsequence) let $n=Nl+r$
with $0\leq r<l.$
We claim that for every $u$ and every $0 \leq r < l$, 
\begin{equation}
\label{RatioCSurv}
\lim_{N \rightarrow \infty} (\frac{\langle \zeta_{Nl+r}, u \rangle}{\langle \zeta_{Nl}, u_{Nl} \rangle}-
\frac{\Lambda_{Nl+r}}{\Lambda_{Nl}} \langle u_{Nl+r}, u\rangle ) = 0
\quad\text{in probability}.
\end{equation}
Lemma \ref{CrJagers} follows directly from \eqref{RatioCSurv}
and Theorem \ref{the2} applied to $\{Z_{Nl}\}.$ 
The proof of \eqref{RatioCSurv} is similar to the proof of \eqref{colli2} and \eqref{qq}, so we leave
it to the reader.
\end{proof}

It still remains to prove Lemmas \ref{lemmaa}--\ref{lemmaz}.

\begin{proof}[Proof of Lemma \ref{lemmaa}]   
Suppose that we have (\ref{xzx}) with $\|\delta_{k,n}\| \leq \varepsilon''$, but without any assumptions on $c_{k,n}$. Then we have, for  $0 \leq k < J(n,\varepsilon) - K$,
\[
\mathbf{1} - f_{k,n}(s) = \mathbf{1} - g_k(f_{k+1,n}(s)) =  A_k(\mathbf{1} - f_{k+1,n}(s)) + \alpha_{k,n} \|\mathbf{1} - f_{k+1,n}(s) \|,
\]
where $\|\alpha_{k,n}\|$ can be made arbitrarily small, uniformly in $k$, by selecting sufficiently small $\varepsilon$. The latter statement about $\alpha_{k,n}$ follows
from the assumption that $1 - f^i_{k+1,n}(s) \leq \varepsilon$ for all $i$ (from definition of $J(n,\varepsilon)$) and the fact that
\[
A_{k}(j,i) = \frac{\partial g_{k}^j}{\partial s_i}(\mathbf{1}).
\]
The uniformity in $k$ follows from Assumption 3. Thus 
\[
\mathbf{1} - f_{k,n}(s) =  A_k (c_{k+1,n}(v_{k+1} + \delta_{k+1,n})) + \alpha_{k,n} \|\mathbf{1} - f_{k+1,n}(s) \| = 
\]
\[
c_{k+1,n} \lambda_{k} v_k + c_{k+1,n}  A_k \delta_{k+1,n} + \alpha_{k,n} \|\mathbf{1} - f_{k+1,n}(s) \| = c_{k+1,n} \lambda_{k} (v_k + \alpha'_{k,n}),
\]
where $\|\alpha'_{k,n}\|$ can be made arbitrarily small, uniformly in $k$, by selecting sufficiently small $\varepsilon$ and $\varepsilon''$. Here we used (\ref{xzx})
with $k+1$ instead of $k$ to estimate the contribution from the term $\alpha_{k,n} \|\mathbf{1} - f_{k+1,n}(s) \|$. Thus
\[
c_{k+1,n} \lambda_{k} (v_k + \alpha'_{k,n}) = c_{k,n}(v_{k} + \delta_{k,n}),
\]
which implies that $|({c_{k,n}}/{c_{k+1,n}}) - \lambda_k| \leq \varepsilon'$ holds for $0 \leq k < J(n,\varepsilon) - K$, provided that $\varepsilon$ and $\varepsilon''$
are sufficiently small. We have demonstrated, therefore, that it is sufficient to establish (\ref{xzx}) with the estimate $\|\delta_{k,n}\| \leq \varepsilon'$ only.

By part (d) of Proposition~\ref{Propabc}, there is $k' \in \mathbb{N}$ such that
\begin{equation} \label{ooip}
\left(1-\frac{\varepsilon'}{2d}\right)v_k \leq \frac{M_{k, k+k'} v}{\|M_{k, k+k'} v\|} \leq 
\left(1+\frac{\varepsilon'}{2d}\right)v_k
\end{equation}
for each $k$ and each  non-zero vector $v$ with non-negative components. Since 
\[
M_{k, k+k'}(j,i) = \frac{\partial f_{k, k+k'}^j}{\partial s_i}(\mathbf{1}),
\]
we can linearize the mapping $\mathbf{1} - f_{k,k+k'}(s)$ at $s = \mathbf{1}$ and obtain that there is $\varepsilon$ such that
\[
{\|\mathbf{1} - f_{k,k+k'}(\mathbf{1} - v) - M_{k, k+k'}  v  \|} \leq \frac{\varepsilon'}{2d} {\|M_{k, k+k'} v \|}
\]
whenever $0 < \|v\| \leq \varepsilon d$. (We have used here that $M_{k, k+ k'}$ is bounded uniformly in $k$.) Therefore,
\[
M_{k, k+k'} v - \frac{\varepsilon'}{2d} \|M_{k, k+k'} v \| \mathbf{1} \leq \mathbf{1} - f_{k,k+k'}(\mathbf{1} - v) \leq M_{k, k+k'} v +\frac{\varepsilon'}{2d} \|M_{k, k+k'} v \| \mathbf{1}.
\]
Combined with (\ref{ooip}), this gives
\[
\|M_{k, k+k'} v \| (v_k - \frac{\varepsilon'}{d} \mathbf{1}) 
\leq \mathbf{1} - f_{k,k+k'}(\mathbf{1} - v) \leq \|M_{k, k+k'} v \| (v_k + \frac{\varepsilon'}{d} \mathbf{1}).
\]
Setting $K = k'+1$, we see that the last inequality can be applied to $v = \mathbf{1} - f_{k+k', n}(s)$, provided that $0 \leq k \leq J(n,\varepsilon) - K$, resulting in
\[
c_{k,n} (v_k - \frac{\varepsilon'}{d} \mathbf{1}) 
\leq \mathbf{1} - f_{k,n}(s) \leq c_{k,n} (v_k + \frac{\varepsilon'}{d} \mathbf{1}),
\]
which gives the desired estimate. 
\end{proof}

\begin{proof}[Proof of Lemma \ref{lemmab}] We break up the difference $ ({\alpha(n,s)}/{\Gamma_n}) -1$  into three parts. So we want to prove that for each $\sigma > 0$ there is $\varepsilon > 0$ such that   
\begin{equation} \label{threet}
\Big|\frac{\alpha(n,s) - {\alpha(J(n,\varepsilon) - K - 1}, s)}{\Gamma_n} + \frac{{\alpha(J(n,\varepsilon) - K - 1}, s) - \Gamma_{{J(n,\varepsilon) - K - 1}}}{\Gamma_n} 
\end{equation}
\[
+ \frac{\Gamma_{{J(n,\varepsilon) - K - 1}} - \Gamma_n}{\Gamma_n}\Big| \leq \sigma
\]
for all $s$ and all sufficiently large $n$.
\\

1. We first estimate the middle term in the inequality above. By Lemma \ref{lemmaa}, for each $\sigma' > 0$, there exist a natural number $K$ and $\varepsilon_1 > 0 $ such that
\[
(1 - \sigma')c_{k,n}v_k \leq  \mathbf{1} - f_{k,n}(s) \leq (1 + \sigma')c_{k,n}v_k
\]
for each $k < J(n,\varepsilon_1) - K$. 

By Assumption 4, $\|_iX_n\|^2$  are uniformly integrable, and thus the matrices $Hg_k^i(s)$, $k \geq 0$, $i \in S$, are equicontinuous in $s$. 
Note also that $\|Hg_k^i(\textbf{1})\| \geq c > 0$ for all $k \geq 0$, $i \in S$. Thus there exists $\varepsilon_2 > 0$ such that the matrix norm  
satisfies $\|Hg^i_{k}(\eta_{k+1,n}) - Hg^i_{k}(\textbf{1})\| < \sigma'\| Hg_k^i(\textbf{1})\| $ for each $k < J(n, \varepsilon_2) - K$.  Choosing  $\varepsilon = \min(\varepsilon_1, \varepsilon_2)$, we see that there is a constant $\tilde{c}$ independent of $\sigma' > 0$ such that
\[
\Big|\frac{1}{2}\sum_{k = 0}^{{J(n,\varepsilon) - K - 1}} \frac{\langle (\mathbf{1} - f_{k+1,n}(s))^THg_{k}(\eta_{k+1,n})(\mathbf{1} - f_{k+1,n}(s)) , u_{k} \rangle}{\tilde{\Lambda}_{k+1}\langle (\mathbf{1} - f_{k+1,n}(s)) ,u_{k+1} \rangle \langle \mathbf{1} - f_{k,n}(s) , u_{k} \rangle }  - \Gamma_{{J(n,\varepsilon) - K - 1}}\Big|
\]
\[
\leq \tilde{c} \sigma' \Gamma_{{J(n,\varepsilon) - K - 1}} \leq \tilde{c} \sigma' \Gamma_n.
\]
(In essence, there a small relative error, linear in $\sigma'$, in the factors in each of the terms of the sum, and thus the total relative error is small.)
By choosing $\sigma' \leq \sigma/(3 \tilde{c})$, we obtain
\[
\Big|\frac{{\alpha(J(n,\varepsilon) - K - 1}, s) - \Gamma_{{J(n,\varepsilon) - K - 1}}}{\Gamma_n}\Big| < \frac{\sigma}{3}.
\]

2. Now we estimate the third term in (\ref{threet}). We can assume that $K$ and $\varepsilon$ are fixed. We first observe that we can obtain a relation similar to 
(\ref{eq1z}) by starting with the expression $\langle \mathbf{1} - f_{{J(n,\varepsilon)},n}(s) , u_{{J(n,\varepsilon)}} \rangle $ instead of $\langle \mathbf{1} - f_{0,n}(s) , u_{0} \rangle$.
Thus, by doing the same steps that we carried out to obtain (\ref{eq1z}), we get
\[
\langle \mathbf{1} - f_{{J(n,\varepsilon)},n}(s) , u_{{J(n,\varepsilon)}} \rangle  
\]
\[
=  \Big(\frac{\tilde{\Lambda}_{J(n,\varepsilon)}}{\tilde{\Lambda}_{n}\langle \mathbf{1}-s , u_n\rangle} + \frac{1}{2} \sum_{k = {J(n,\varepsilon)}}^{n-1} \frac{\tilde{\Lambda}_{ J(n,\varepsilon) }\langle (\mathbf{1} - f_{k+1,n}(s))^THg_{k}(\eta_{k+1,n})(\mathbf{1} - f_{k+1,n}(s)) , u_{k} \rangle}{\tilde{\Lambda}_{k+1}
	\langle (\mathbf{1} - f_{k+1,n}(s)),u_{k+1} \rangle \langle \mathbf{1} - f_{k,n}(s) , u_{k} \rangle }\Big)^{-1}
\]
\[
= \frac{1}{\tilde{\Lambda}_{J(n,\varepsilon)}}\Big(\frac{1}{\tilde{\Lambda}_{n}\langle \mathbf{1}-s , u_n\rangle} + \frac{1}{2}\sum_{k = {J(n,\varepsilon)}}^{n-1} \frac{\langle (\mathbf{1} - f_{k+1,n}(s))^THg_{k}(\eta_{k+1,n})(\mathbf{1} - f_{k+1,n}(s)) , u_{k} \rangle}{\tilde{\Lambda}_{k+1}\langle (\mathbf{1} - f_{k+1,n}(s)) ,u_{k+1} \rangle \langle \mathbf{1} - f_{k,n}(s) , u_{k} \rangle }\Big)^{-1}
\]
\[
= \frac{1}{\tilde{\Lambda}_{J(n,\varepsilon)}}\Big(\frac{1}{\tilde{\Lambda}_{n}\langle \mathbf{1}-s , u_n\rangle} + (\alpha(n,s) - \alpha({J(n,\varepsilon)},s))\Big)^{-1}
\]
\[
\leq \frac{1}{\tilde{\Lambda}_{J(n,\varepsilon)} (\alpha(n,s) - \alpha({J(n,\varepsilon)},s))}
\leq C \frac{1}{\tilde{\Lambda}_{J(n,\varepsilon)} (\Gamma_n - \Gamma_{J(n,\varepsilon)})},
\]
where the last inequality follows from (\ref{ceq}) and (\ref{esnnze}). 
From here it follows that
\[
(\Gamma_n - \Gamma_{J(n, \varepsilon)}) \leq C \frac{1}{\tilde{\Lambda}_{J(n, \varepsilon)}\langle \mathbf{1} - f_{{J(n, \varepsilon)},n}(\textbf{0}) , u_{J(n, \varepsilon)} \rangle} \leq \frac{C}{\tilde{\Lambda}_{J(n, \varepsilon)}\varepsilon\bar{\varepsilon}}. 
\]
Therefore,
\[
\frac{\Gamma_n - \Gamma_{J(n, \varepsilon)}}{\Gamma_n} \leq \frac{C}{\tilde{\Lambda}_{J(n, \varepsilon)}\Gamma_n\varepsilon\bar{\varepsilon}} \leq \frac{C}{\tilde{\Lambda}_{J(n, \varepsilon)}\Gamma_{J(n, \varepsilon)}\varepsilon\bar{\varepsilon}}.
\]
Since $J(n, \varepsilon) \rightarrow \infty$  as $n \rightarrow \infty$, by Assumption 6 
(see (\ref{aeit}) and part (e) of Proposition~\ref{Propabc}),  we have 
\[
\left|\frac{C}{\tilde{\Lambda}_{J(n, \varepsilon)}\Gamma_{J(n, \varepsilon)}\varepsilon\bar{\varepsilon}}\right| < \frac{\sigma}{6}
\]
for all sufficiently large $n$. 
Now,
\[
\frac{\Gamma_n - \Gamma_{{J(n,\varepsilon) - K - 1}}}{\Gamma_n}  = \frac{\Gamma_n - \Gamma_{J(n, \varepsilon)}}{\Gamma_n} + \frac{\Gamma_{J(n, \varepsilon)} - \Gamma_{{J(n,\varepsilon) - K - 1}}}{\Gamma_n}. 
\]
For a fixed $K$, using Assumption 6 and the fact that $J(n,\varepsilon) \rightarrow \infty$ as $n \rightarrow \infty$, we see for all sufficiently large $n$ and
$k \geq J(n,\varepsilon) - K - 1$, 
\[
\left|\frac{1}{\Gamma_n}\left(\frac{1}{\lambda_k\tilde{\Lambda}_{k+1}}\frac{\langle v_{k+1}^THg_{k}(\mathbf{1})v_{k+1} , u_{k} \rangle}{\langle v_{k+1} ,u_{k+1} \rangle \langle v_{k}, u_{k} \rangle} \right)\right| < \frac{\sigma}{3(K + 1)}.
\]
Therefore, 
\[
\left|\frac{1}{\Gamma_n}\left(\frac{1}{2}\sum_{k = J(n, \varepsilon) - K - 1}^{J(n, \varepsilon)}\frac{1}{\lambda_k\tilde{\Lambda}_{k+1}}\frac{\langle v_{k+1}^THg_{k}(\mathbf{1})v_{k+1} , u_{k} \rangle}{\langle v_{k+1} ,u_{k+1} \rangle \langle v_{k}, u_{k} \rangle} \right)\right| < \frac{\sigma}{6}.
\]
Thus,
\[
\frac{\Gamma_{J(n, \varepsilon)} - \Gamma_{{J(n,\varepsilon) - K - 1}}}{\Gamma_n}  < \frac{\sigma}{6},
\]
and, therefore, for all sufficiently large $n$ we have
\[
\left|\frac{\Gamma_n - \Gamma_{J(n, \varepsilon)}}{\Gamma_n}\right| < \frac{\sigma}{3}.
\]

3. We know that 
\[
\frac{\alpha(n,s) - {\alpha(J(n,\varepsilon) - K - 1}, s)}{\Gamma_n} \leq C \frac{\Gamma_n - \Gamma_{{J(n,\varepsilon) - K - 1}}}{\Gamma_n},  
\]
and by the same arguments as above, for all sufficiently large $n$ we have
\[
\left|C \frac{\Gamma_n - \Gamma_{{J(n,\varepsilon) - K - 1}}}{\Gamma_n}  \right| <  \frac{\sigma}{3}.
\]

By the estimates from steps 1-3, for all sufficiently large $n$ and all ${s \in [0,1]^d  \setminus \{ \mathbf{1} \} }$ we have 
\[
\left|\frac{\alpha(n,s)}{\Gamma_n} - 1 \right| < \sigma.
\]
Since $\sigma > 0$ was arbitrary, the proof is complete. 
\end{proof}

\begin{proof}[Proof of Lemma \ref{lemmac}]  
We know that $\mathbb{P}(_jZ_n \neq \mathbf{0}) = 1 - f_n^j(\mathbf{0})$, and therefore for a fixed $j \in S$, by Lemma~\ref{lemmaa}, it is sufficient to prove that
\begin{equation} \label{sfor}
\lim_{n \rightarrow \infty} (c_{0,n} \langle v_0, u_0 \rangle \Gamma_n) = 1,
\end{equation}
where $c_{0,n}$ is the same as in Lemma~\ref{lemmaa}.
From Lemma~\ref{lemmab} and (\ref{hhk}) we know that
\[
\langle \mathbf{1} - f_{0,n}(s) , u_{0} \rangle \sim \Big(\frac{1}{\tilde{\Lambda}_{n}\langle \mathbf{1} -s , u_n\rangle} + \Gamma_n\Big)^{-1}~~~{\rm as}~n \rightarrow \infty. 
\]
By plugging in $s = \mathbf{0}$ and by replacing 
$\langle \mathbf{1} - f_{0,n}(\mathbf{0}) , u_{0} \rangle$ by $c_{0,n}\langle v_0, u_0\rangle$, we get
\[
\lim_{n \rightarrow \infty} c_{0,n}\langle v_0, u_0\rangle\Big(\frac{1}{\tilde{\Lambda}_{n}\langle \mathbf{1} , u_n\rangle} + \Gamma_n \Big) = 1.
\]
Thus we have
\[
\lim_{n \rightarrow \infty} c_{0,n}\Gamma_n\langle v_0, u_0\rangle
\Big(\frac{1}{\tilde{\Lambda}_{n}\Gamma_n  \langle \mathbf{1} , u_n\rangle} + 1 \Big) = 1.
\]
By Assumption 6, $\tilde{\Lambda}_{n}\Gamma_n \rightarrow \infty$ as $n\rightarrow \infty$  proving \eqref{sfor}.
\end{proof}

\begin{proof}[Proof of Lemma \ref{lemmaz}]
As in the proof of Lemma~\ref{lemmac}, it is sufficient to show that
\[
\lim_{n \rightarrow \infty}  \frac{c_{0,n}\tilde{\Lambda}_n\Gamma_n\langle v_0, e_j\rangle \langle u_n , u_n \rangle}{\mathbb{E}(\langle _jZ_n, u_n \rangle)} = 1.
\]
We observe that
\[
\mathbb{E}(\langle _jZ_n, u_n \rangle) = \sum_{i = 1}^d \mathbb{E}(Z_n(i)u_n(i) |Z_0 = e_j)
\]
\[
=\sum_{i = 1}^d u_n(i)\mathbb{E}(Z_n(i) |Z_0 = e_j)
\]
\[
=\sum_{i = 1}^d u_n(i)M_n(j,i)
\]
\[
= \langle M_n u_n, e_j\rangle
\]
\[
=  \langle  u_n, M_n^Te_j\rangle
\]
We know that $v_0 = M_nv_n/{\Lambda_n}$,
and thus what we want to prove is that
\[
\lim_{n \rightarrow \infty}  \frac{c_{0,n}\tilde{\Lambda}_n\Gamma_n\langle v_n, M_n^Te_j\rangle \langle u_n , u_n \rangle}{\Lambda_n\langle  u_n, M_n^Te_j\rangle} = 1.
\]
By (\ref{sfor}), it is sufficient to show that  
\[
\lim_{n \rightarrow \infty}
\frac{\tilde{\Lambda}_n\langle v_n, M_n^Te_j\rangle \langle u_n , u_n \rangle}{\Lambda_n\langle v_0, u_0\rangle\langle  u_n, M_n^Te_j\rangle} = 1.
\]
By Proposition \ref{Propabc} (part (d)),  the vectors  $M_n^Te_j$ 
align with the vectors $u_n$.
Therefore, it remains to prove that 
\[
\frac{\tilde{\Lambda}_n\langle v_n, u_n\rangle}{\Lambda_n\langle v_0, u_0\rangle} =  1.
\]
But this is true because
\[
\langle v_n , u_n\rangle  = \langle  v_n, \frac{ A^{T}_{n-1}u_{n-1}}{\tilde{\lambda}_{n-1}}\rangle
= \langle  A_{n-1}v_n , \frac{u_{n-1}}{\tilde{\lambda}_{n-1}}\rangle
= \frac{\lambda_{n-1}}{\tilde{\lambda}_{n-1}}\langle v_{n-1}, u_{n-1} \rangle
= \frac{\Lambda_{n}}{\tilde{\Lambda}_{n}}\langle v_{0}, u_{0} \rangle,
\]
where the last equality is obtained by iterating the previous steps $n$ times. 
\end{proof}

\section{Branching processes with continuous time} \label{brct}
In this section,  we provide an application of our results to continuous time branching.
Let $\rho_t(j)$, $1 \leq j \leq d$, be continuous functions  and $P_t(j,\cdot)$ be transition distributions on $\mathbb{Z}_+^d$ such that $P_t(j,a)$ is continuous
for each $a \in \mathbb{Z}_+^d$. 

Let $_jX_t$ be a random vector with values in $\mathbb{Z}_+^d$, whose distribution is given by $P_t(j,\cdot)$. We assume that there are $\varepsilon_0, K_0 > 0$ such that for all $i,j \in S$ the following bounds hold.
\\

0$'$. $\varepsilon_0 \leq \rho_t(j) \leq K_0$. 
\\

1$'$.  $\mathbb{P}(_jX_t(i) \geq 2) \geq \varepsilon_0$.
\\

2$'$. $\mathbb{P}(_jX_t = \mathbf{0} ) \geq \varepsilon_0$.
\\

3$'$. $\mathbb{E}(\|_jX_t\|^2 \big) \leq K_0$.
\\

Assuming that we start with a finite number of particles and that the above bounds hold, the transition rates $\rho_t(j)$ and the transition distributions $P_t(j,\cdot)$ define a  continuous time branching process $\{Z_t\}$ with particles of $d$ different types. Namely, each particle of 
type $j$ alive at time $t$ undergoes transformation into $a_1+...+a_d$ particles: $a_1$ particles of type one, $a_2$ particles of type two, etc., during the time interval $[t, t+ \Delta]$ with probability $ \rho_t(j) P_t(j,a) (\Delta + o(\Delta))$. 

Observe that $\{Z_n\}$, $n \in \mathbb{N}$, $n \geq 0$, is a discrete time branching process that satisfies Assumptions 1-3 (with different $\varepsilon_0$ and $K_0$). The fact that it satisfies Assumptions 1 and 2 is clear.
The first moment $M(t) = \mathbb{E} Z_t$ satisfies
\begin{equation} \label{hjhjl}
M'(t) = B^T(t) M(t),
\end{equation}
where $B(t)_{ji}=\rho_t(j) (\mathbb{E}( _j X_t(i))-\delta_{ij}).$ 
 Similarly, if $\mathbb{E}\left( || _jX_t||^p\right)$
exists and depends continuously on $t$,
then the moments of $\{Z_t\}$
of order $p$ satisfy inhomogeneous linear equations, and if $\mathbb{E}\left( || _jX_t||^p\right)$
is uniformly bounded in both $t$ and $j$, then the coefficients of those equations are uniformly bounded.
In particular, Assumption~3$'$ implies that Assumption~3 is satisfied,
while a bound on the third moment of $\|_jX_t\|$  (see Assumption 4$'$ below) 
would imply that Assumption 4 is satisfied. 

Recall that, in the notation of Section~\ref{nore} applied to the process observed at integer time points,
\[
\mathbb{E} Z_n = M_n^T  Z_0.
\]
Therefore, by part (e) of Proposition~\ref{Propabc}, for each initial population, there is a positive constant $C$ such that
\[
\frac{1}{C} \Lambda_n \leq \| \mathbb{E} Z_n \| \leq C \Lambda_n.
\] 
From (\ref{hjhjl}) it follows that there is a positive constant $c$ such that
\[
\frac{1}{c} \| \mathbb{E} Z_n \| \leq \| \mathbb{E} Z_t \| \leq c \| \mathbb{E} Z_n \|,~~~~n \leq t \leq n+1. 
\]
Therefore, the condition 
$\sum_{k=1}^{\infty}({1}/{\Lambda}_{k}) = \infty$ used in Theorem~\ref{yth} is equivalent to the following:
\begin{equation} \label{ooii}
\int_0^\infty \frac{1}{ \| \mathbb{E} Z_t \|} dt = \infty.
\end{equation}
Thus we have the following continuous time analogue of Theorem~\ref{yth}.
\begin{thm} \label{ythtwo}
Under Assumptions 0$'$-3$'$,  if extinction  of the process $\{Z_t\}$ occurs with probability one for some initial population, then
(\ref{ooii}) holds. If (\ref{ooii}) holds, then extinction with probability one occurs for every initial population. 
\end{thm}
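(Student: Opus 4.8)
The plan is to reduce Theorem~\ref{ythtwo} to its discrete-time counterpart, Theorem~\ref{yth}, by passing to the embedded chain $\{Z_n\}$ obtained by observing $\{Z_t\}$ at integer times. As noted in the paragraphs preceding the statement, the branching property of the continuous-time Markov process is preserved under time-sampling, so $\{Z_n\}$ is genuinely a multi-type branching process in the sense of Section~\ref{nore}, and Assumptions~0$'$--3$'$ guarantee that it satisfies Assumptions~1--3 (with possibly different constants $\varepsilon_0, K_0$). I would record these verifications first: Assumption~3 comes from the second-moment equations associated with (\ref{hjhjl}); Assumptions~1 and~2 follow from the fact that, with rates confined to $[\varepsilon_0,K_0]$, there is a uniform positive probability that a single type-$j$ particle alive at time $n$ makes exactly one transition in $[n,n+1]$ --- into $\mathbf{0}$ (for Assumption~2) or into at least two type-$i$ particles (for Assumption~1) --- with no further transition before time $n+1$.

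Next I would identify the extinction events of the two processes. Since $\mathbf{0}$ is absorbing, $\{Z_t\}$ is extinct precisely when $Z_n=\mathbf{0}$ for some integer $n$; thus, for each fixed initial population, the extinction events of $\{Z_t\}$ and of $\{Z_n\}$ coincide, and almost-sure extinction holds for one if and only if it holds for the other. Applying Theorem~\ref{yth} to $\{Z_n\}$ then yields both directions: almost-sure extinction of $\{Z_t\}$ for some initial population forces $\sum_{k=1}^{\infty}1/\Lambda_k=\infty$, while conversely $\sum_{k=1}^{\infty}1/\Lambda_k=\infty$ forces almost-sure extinction of $\{Z_t\}$ for every initial population.

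It remains to translate the series condition into the integral condition (\ref{ooii}), which is supplied by the two-sided comparison established just above the theorem. Part~(e) of Proposition~\ref{Propabc} gives $\tfrac{1}{C}\Lambda_n\leq\|\mathbb{E}Z_n\|\leq C\Lambda_n$, and (\ref{hjhjl}) gives $\tfrac{1}{c}\|\mathbb{E}Z_n\|\leq\|\mathbb{E}Z_t\|\leq c\|\mathbb{E}Z_n\|$ for $n\leq t\leq n+1$. Combining these, $\int_n^{n+1}dt/\|\mathbb{E}Z_t\|$ is comparable to $1/\Lambda_n$ uniformly in $n$, so $\sum_k 1/\Lambda_k$ and $\int_0^\infty dt/\|\mathbb{E}Z_t\|$ converge or diverge together (the single term $1/\Lambda_0$ being irrelevant), completing the equivalence.

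The bulk of the argument is bookkeeping; the only genuinely substantive point is the verification that the embedded chain inherits Assumptions~1--3, and among these the uniform lower bounds in Assumptions~1 and~2 are what I expect to demand the most care, since they must hold uniformly in $n$ using only the bounds~0$'$--3$'$ together with the structure of the transition times. Everything downstream is a direct appeal to Theorem~\ref{yth} and the elementary series--integral comparison already in place.
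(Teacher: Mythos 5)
Your proposal is correct and follows essentially the same route as the paper: the paper also reduces Theorem~\ref{ythtwo} to Theorem~\ref{yth} by observing that the embedded integer-time chain $\{Z_n\}$ satisfies Assumptions~1--3 (with modified constants), and then converts $\sum_k 1/\Lambda_k=\infty$ into (\ref{ooii}) using Proposition~\ref{Propabc}(e) together with the comparability of $\|\mathbb{E}Z_t\|$ and $\|\mathbb{E}Z_n\|$ on $[n,n+1]$ coming from (\ref{hjhjl}). Your treatment is in fact slightly more explicit than the paper's, which declares Assumptions~1 and~2 for the embedded chain ``clear'' and leaves the identification of the extinction events implicit.
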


To formulate the next theorem, we will make use of the following assumptions:
\\

4$'$. $\mathbb{E}(\|_jX_t\|^3 \big) \leq K_0$ for some $K_0 > 0$.
\\

5$'$. $\mathbb{P} (Z_t \neq \mathbf{0}) \rightarrow 0$ as $t \rightarrow \infty$. 
\\

6$'$. $\mathbb{E} \| Z_t  \| /\mathbb{P} (Z_t \neq \mathbf{0}) \rightarrow \infty $ as $t \rightarrow \infty$.
\\

Note that if Assumptions 0$'$-6$'$ are satisfied, then Assumptions 1-6 are satisfied by the discrete time process $\{Z_n\}$.
Let $\zeta_t = (\zeta_t(1),...,\zeta_t(d))$ be the random vector obtained from $Z_t$ by conditioning on the event that $Z_t \neq \mathbf{0}$. The following theorem
is an easy consequence of Theorem~\ref{the2}. The proof is left to the reader.  

\begin{thm} \label{the2rr}
Under Assumptions 0$'$-6$'$, for each initial population of the branching process and each vector $u$ with positive components,  we have the following limit in distribution
\begin{equation} \label{maineqrr}
\frac{\langle \zeta_t,u \rangle}{  \mathbb{E} \langle \zeta_t,u \rangle } \rightarrow \xi, ~~{\it as}~t \rightarrow \infty,
\end{equation}
where $\xi$ is an exponential random variable with parameter one. Moreover, if  Assumptions~0$'$-5$'$  are satisfied and, for some initial population, 
the limit in (\ref{maineqrr}) is as specified, then Assumption~6$'$ is  also satisfied. 
\end{thm}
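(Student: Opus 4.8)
The plan is to deduce the result from the discrete-time Theorem~\ref{the2} applied to the embedded chain $\{Z_n\}$, and then to remove the restriction to integer times by an interpolation argument of the same type as in the proof of Lemma~\ref{CrJagers}. First I would verify, as already indicated in the text preceding the theorem, that under Assumptions 0$'$--6$'$ the process observed at integer times satisfies Assumptions 1--6: Assumptions 1 and 2 are immediate, Assumption 3 follows from 3$'$ via the first- and second-moment equations associated with (\ref{hjhjl}), Assumption 4 follows from the third-moment bound 4$'$ (which forces uniform integrability of $\|{}_jX_n\|^2$), and Assumptions 5, 6 follow from 5$'$, 6$'$ together with the comparison $\frac{1}{c}\|\mathbb{E} Z_n\|\le\|\mathbb{E} Z_t\|\le c\|\mathbb{E} Z_n\|$ for $n\le t\le n+1$. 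Theorem~\ref{the2} then gives (\ref{maineqrr}) along the integer subsequence $t=n$, as well as the companion collinearity statement (\ref{colli2}) for $\{Z_n\}$.

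To handle a general $t$, write $t=n+\tau$ with $n=\lfloor t\rfloor$ and $\tau\in[0,1)$, and let $\Phi(n,t)$ denote the fundamental solution of (\ref{hjhjl}) on $[n,t]$, so that $\mathbb{E}(Z_t\mid\mathcal{F}_n)=\Phi(n,t)^{T}Z_n$. The interpolation rests on three observations. First, since $\mathbb{P}(Z_n\ne\mathbf{0})\to0$ only slowly while $\|Z_n\|\to\infty$ in probability on survival, and since a colony of size $m$ dies out over the bounded interval $[n,t]$ with probability exponentially small in $m$, conditioning on $\{Z_t\ne\mathbf{0}\}$ is asymptotically interchangeable with conditioning on $\{Z_n\ne\mathbf{0}\}$, exactly as in the passage from ${}_j\zeta^{k'}_n$ to ${}_j\zeta_{n+k'}$ in the proof of Theorem~\ref{the2}. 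Second, given $\mathcal{F}_n$ with $\|Z_n\|=m$ large, $Z_t$ is a sum of $m$ independent per-particle contributions over an interval of length $\tau<1$, each with second moment bounded uniformly by Assumption 3$'$; hence $\mathrm{Var}(\langle Z_t,u\rangle\mid\mathcal{F}_n)\le Cm$ while $\mathbb{E}(\langle Z_t,u\rangle\mid\mathcal{F}_n)\asymp m$, so the relative fluctuation is $O(m^{-1/2})$ and $\langle Z_t,u\rangle=\langle Z_n,\Phi(n,t)u\rangle\,(1+o_{\mathbb{P}}(1))$. Third, conditioned on survival $\mathcal{P}(Z_n)\to u_n$ by (\ref{colli2}), while $\mathbb{E} Z_n=M_n^{T}Z_0$ aligns with $u_n$ by Proposition~\ref{Propabc}(d); substituting these into numerator and denominator makes the common factor $\langle u_n,\Phi(n,t)u\rangle$ cancel, leaving
\[
\frac{\langle Z_t,u\rangle}{\mathbb{E}\langle Z_t,u\rangle}=\frac{\|Z_n\|}{\|\mathbb{E} Z_n\|}\,(1+o_{\mathbb{P}}(1))=\frac{\langle Z_n,u_n\rangle}{\mathbb{E}\langle Z_n,u_n\rangle}\,(1+o_{\mathbb{P}}(1)).
\]

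Combining the first observation with the last display shows that $\langle\zeta_t,u\rangle/\mathbb{E}\langle\zeta_t,u\rangle$ and $\langle\zeta_n,u_n\rangle/\mathbb{E}\langle\zeta_n,u_n\rangle$ differ by a quantity tending to $0$ in probability; since the latter converges in distribution to $\xi$ by Theorem~\ref{nnn}, so does the former, which proves (\ref{maineqrr}) (a general initial population is treated exactly as in Theorem~\ref{the2}, by reducing to a single starting particle). For the converse, assume Assumptions 0$'$--5$'$ and that (\ref{maineqrr}) holds for some initial population, and suppose Assumption 6$'$ failed; then $\mathbb{E}\|\zeta_t\|=\mathbb{E}\|Z_t\|/\mathbb{P}(Z_t\ne\mathbf{0})$ would stay bounded along some sequence $t_k\to\infty$, and along $\{t_k\}$ the integer-valued random variables $\langle\zeta_{t_k},u\rangle$, having bounded mean, could not have their normalizations converge to the atomless law of $\xi$, contradicting (\ref{maineqrr}); hence 6$'$ holds. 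The main obstacle is the second observation: one must establish the bounded-time concentration estimate uniformly in $\tau\in[0,1)$ and in $t$, that is, that Assumption 3$'$ controls $\mathrm{Var}(\langle Z_t,u\rangle\mid\mathcal{F}_n)$ linearly in $\|Z_n\|$ with constants independent of $t$ (together with the uniform two-sided bounds on the entries of $\Phi(n,t)$ needed for $\mathbb{E}(\langle Z_t,u\rangle\mid\mathcal{F}_n)\asymp\|Z_n\|$); granted this, the remaining steps are faithful transcriptions of the discrete-time arguments already used for (\ref{colli2}), (\ref{RatioCSurv}) and Lemma~\ref{CrJagers}.
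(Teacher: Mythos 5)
Your proposal is correct and takes the same route the paper intends: the paper checks that Assumptions 0$'$--6$'$ imply Assumptions 1--6 for the process observed at integer times and then declares Theorem~\ref{the2rr} an easy consequence of Theorem~\ref{the2}, leaving the details to the reader. Your reduction to the embedded chain plus the bounded-interval interpolation (interchange of conditioning on $\{Z_t\neq\mathbf{0}\}$ and $\{Z_n\neq\mathbf{0}\}$, per-particle variance concentration controlled by Assumptions 0$'$ and 3$'$, and alignment via (\ref{colli}), (\ref{colli2})) is exactly the omitted argument, and it is sound.
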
 

\appendix
\section{Proof of Proposition \ref{Propabc} }
\label{AppPF}
Let $\cK$ be the cone of positive vectors. Given $u,v\in \cK$, their Hilbert metric distance is defined by
$$ d(u,v)=\ln \frac{\beta(u,v)}{\alpha(u,v)},~~ \text{ where }~
\beta(u,v)=\max_i \frac{v(i)}{u(i)}, \quad
\alpha(u,v)=\min_i \frac{v(i)}{u(i)}. $$
Note that $d$ defines the distance on the space of lines in $\cK$ in the sense that
$$ d(au, bv)=d(u,v),\quad d(u, cu)=0 .$$
Moreover the following estimate holds.

\begin{lem} \label{LmHil-Norm}
{\rm(see, e.g., \cite[Lemma 1.3]{L})}
If $\|u\|=\|v\|=1$,  then
$$ \|u-v\|\leq e^{d(u,v)}-1. $$
\end{lem}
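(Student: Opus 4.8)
The plan is to argue coordinatewise, exploiting that $u$ and $v$ are probability vectors for the $\ell^1$ norm in use. Abbreviating $\alpha=\alpha(u,v)$ and $\beta=\beta(u,v)$, the definitions of $\alpha$ and $\beta$ give the two-sided bound $\alpha\, u(i)\le v(i)\le\beta\, u(i)$ for every $i$, and the quantity to be dominated is $e^{d(u,v)}-1=\beta/\alpha-1$.

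First I would record the normalization fact that $\alpha\le 1\le\beta$. Indeed, $\sum_i u(i)=\sum_i v(i)=1$ gives $\sum_i u(i)\,\bigl(v(i)/u(i)\bigr)=1=\sum_i u(i)$, so $1$ is a convex combination (with weights $u(i)$) of the ratios $v(i)/u(i)$ and therefore lies between their minimum $\alpha$ and their maximum $\beta$.

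Next I would estimate $\|u-v\|$ term by term. From $\alpha u(i)\le v(i)\le\beta u(i)$ one obtains $v(i)-u(i)\le(\beta-1)u(i)$ and $u(i)-v(i)\le(1-\alpha)u(i)$, hence $|u(i)-v(i)|\le\max(\beta-1,\,1-\alpha)\,u(i)$. Invoking $\alpha\le 1\le\beta$ from the previous step, both $\beta-1$ and $1-\alpha$ are bounded by $\beta-\alpha$, so $|u(i)-v(i)|\le(\beta-\alpha)u(i)$; summing over $i$ and using $\sum_i u(i)=1$ yields the clean intermediate estimate $\|u-v\|\le\beta-\alpha$.

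Finally I would compare this with the claimed bound: since $\alpha\le 1$, we have $\beta-\alpha\le(\beta-\alpha)/\alpha=\beta/\alpha-1=e^{d(u,v)}-1$, which finishes the argument. I do not expect a genuine obstacle here; the only step that takes a moment's reflection is the normalization remark $\alpha\le 1\le\beta$, which is precisely what collapses the crude per-coordinate estimate first to $\beta-\alpha$ and then to the asserted $e^{d(u,v)}-1$.
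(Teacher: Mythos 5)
Your proof is correct. Note that the paper itself does not prove this lemma at all; it simply cites Liverani \cite[Lemma 1.3]{L}, so there is no in-paper argument to compare against. Your coordinatewise computation is in fact the standard proof specialized to the present setting (positive orthant, $\ell^1$ norm): the normalization step $\alpha\le 1\le\beta$ via the convex-combination identity, the per-coordinate bound giving $\|u-v\|\le\beta-\alpha$, and the final division by $\alpha\le 1$ are all sound, and positivity of the components (needed so that the ratios $v(i)/u(i)$ and the quantity $\alpha>0$ are well defined) is guaranteed because $u,v$ lie in the cone $\cK$ of positive vectors. The only difference from the cited source is generality: Liverani's version holds for an arbitrary cone and any norm that is monotone with respect to the cone order, with your coordinatewise inequalities replaced by the order relations $-(\beta-\alpha)u\preceq v-u\preceq(\beta-\alpha)u$ and norm monotonicity; your argument trades that generality for a completely elementary, self-contained verification, which is all this paper needs.
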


We will also use the following result of G. Birkhoff.

\begin{lem} \label{LmHilbContr} {\rm
(see \cite[Theorem XVI.3.3]{B} or \cite[Theorem 1.1]{L})}
If $ A$ is a linear operator that maps $\cK$ into itself so that $ A(\cK)$ has finite
diameter $\Delta$ with respect to the Hilbert metric, then for each $u,v\in \cK$
$$ \frac{d( A u,  A v)}{d(u,v)}\leq \tanh\left(\frac{\Delta}{4}\right)< 1. $$
\end{lem}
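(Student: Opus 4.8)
The plan is to prove the inequality by an explicit computation that reduces the $d$-dimensional contraction estimate to a one-variable optimization whose value turns out to be exactly $\tanh(\Delta/4)$. Throughout I would exploit the fact that $d$ is invariant under rescaling each argument, so every estimate may be carried out projectively, i.e.\ up to positive scalar multiples.

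First I would fix $u,v\in\cK$ and set $\alpha=\alpha(u,v)$, $\beta=\beta(u,v)$, so that $d(u,v)=\ln(\beta/\alpha)$. If $\alpha=\beta$ then $u$ and $v$ are proportional, both sides vanish, and there is nothing to prove; so assume $\alpha<\beta$. The vectors $v-\alpha u$ and $\beta u-v$ then lie in $\overline{\cK}\setminus\{\mathbf 0\}$, and applying $A$ produces $p:=A(v-\alpha u)$ and $q:=A(\beta u-v)$, which belong to $\cK$ because $A$ carries the nonzero boundary of the cone into its interior (this is precisely what forces the image diameter to be finite). From $Av=\alpha Au+p$ and $Av=\beta Au-q$ I would solve the two relations to get the projective identities $Au\sim p+q$ and $Av\sim\beta p+\alpha q$.

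Next I would compute the coordinatewise ratios. With $r_i=q_i/p_i>0$ one has $(Av)_i/(Au)_i=(\beta+\alpha r_i)/(1+r_i)$, and this function of $r_i$ is strictly decreasing since $\beta>\alpha$. Hence its maximum and minimum over $i$ are attained at $r_{\min}=\min_i r_i$ and $r_{\max}=\max_i r_i$, yielding a closed form for $e^{d(Au,Av)}$ in terms of $\alpha,\beta,r_{\min},r_{\max}$. The hypothesis enters here: since $p,q\in A(\overline{\cK}\setminus\{\mathbf 0\})$, the diameter bound gives $d(p,q)=\ln(r_{\max}/r_{\min})\le\Delta$.

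It then remains to bound the resulting explicit ratio $d(Au,Av)/d(u,v)$. Writing $\rho=\beta/\alpha>1$ and $\theta=r_{\max}/r_{\min}=e^{d(p,q)}\le e^{\Delta}$, the problem collapses to showing
$$
\sup_{\rho>1,\ r>0}\ \frac{1}{\ln\rho}\,\ln\frac{(\rho+r)(1+\theta r)}{(1+r)(\rho+\theta r)}=\frac{\sqrt{\theta}-1}{\sqrt{\theta}+1},
$$
and since the right-hand side is increasing in $\theta$ and $\theta\le e^{\Delta}$, it is at most $(e^{\Delta/2}-1)/(e^{\Delta/2}+1)=\tanh(\Delta/4)$. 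I expect the main obstacle to be exactly this last optimization: verifying that the supremum over the two parameters is attained (in the limit) and equals $(\sqrt{\theta}-1)/(\sqrt{\theta}+1)$ requires a careful calculus argument, together with disposing of the boundary subtleties, namely that $v-\alpha u$ and $\beta u-v$ are nonzero and that their images lie in the interior of $\cK$, so that $d(p,q)$ is well defined and bounded by $\Delta$.
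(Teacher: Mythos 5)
The paper never proves this lemma: it is imported as a known result, with pointers to Birkhoff's \emph{Lattice Theory} and to Liverani's paper, so there is no internal proof to compare against; your proposal is a reconstruction of the classical projective argument, and it is correct. The decomposition $Av=\alpha Au+p$, $Av=\beta Au-q$ with $p=A(v-\alpha u)$, $q=A(\beta u-v)$, the projective identities $Au\sim p+q$, $Av\sim\beta p+\alpha q$, and the reduction to a two-parameter optimization are all sound, and your claimed supremum identity is true: for fixed $\rho$ and $\theta$ the maximum over $r$ occurs at $r=\sqrt{\rho/\theta}$, where the quantity inside the logarithm equals $\left((\sqrt{\rho\theta}+1)/(\sqrt{\rho}+\sqrt{\theta})\right)^{2}$, and the ratio of its logarithm to $\ln\rho$ increases to $(\sqrt{\theta}-1)/(\sqrt{\theta}+1)$ as $\rho\to 1^{+}$ (the relevant derivative is proportional to $\sqrt{\theta}\,(\sqrt{\rho}-1)^{2}\geq 0$, so the supremum is approached, not attained); monotonicity in $\theta$ together with $\theta\leq e^{\Delta}$ then gives exactly $\tanh(\Delta/4)=(e^{\Delta/2}-1)/(e^{\Delta/2}+1)$. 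Relative to the paper's citation, your route buys a self-contained, calculus-level proof tailored to the positive orthant of $\mathbb{R}^{d}$, which is all the paper needs; the cited versions buy brevity and generality (they hold for abstract cones in Banach spaces, where your coordinatewise max/min manipulations are not available).

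One repair is needed in the step you yourself flagged as a ``boundary subtlety.'' Your parenthetical justification that $p,q\in\cK$ --- that $A$ carries the nonzero boundary of the cone into its interior, this being ``precisely what forces the image diameter to be finite'' --- is not implied by the hypotheses: the rank-one matrix with both rows equal to $(1,0)$ maps $\cK$ into itself with $A(\cK)$ a single ray (diameter $0$), yet it annihilates the boundary vector $(0,1)$. The correct handling is: if $p=\mathbf{0}$ or $q=\mathbf{0}$, then $Av$ is proportional to $Au$, so $d(Au,Av)=0$ and the bound is trivial; otherwise $p$ and $q$ are nonzero limits of points of $A(\cK)$ (approximate $v-\alpha u$ and $\beta u-v$ from inside $\cK$), and the finite-diameter hypothesis forces every nonzero vector in the closure of $A(\cK)$ to have strictly positive components, with $d(p,q)\leq\Delta$ following by continuity of $d$ on pairs of strictly positive vectors. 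With this adjustment your plan goes through as written.
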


\begin{proof}[Proof of Proposition \ref{Propabc}]
Assumptions 1-3 imply that
$ A_n(\cK)\subset \brcK(R)$, where $R=\sqrt{K_0}/\eps_0$ and
$$ \brcK(R):=\{u: u(i)>0 \text{ for each } i \text{ and } \max_i u(i)\leq R \min_i u(i)\}. $$
Note that if $u, v\in \brcK(R)$, then multiplying these vectors by
$c_u = (\max_i u(i))^{-1}$ and $c_v =(\max_i v(i))^{-1}$, respectively, we get
$$ \beta(u,v)\leq R, \quad \alpha(u,v)\leq \frac{1}{R} $$
and so ${\rm diam}(\brcK(R)) \leq 2\ln R.$

Now let
$\cK_{k,n}=M_{k,n} \cK$ and let $\bbK_{k,n}$ denote the set of elements of $\cK_{k,n}$ with unit norm.
Then, for each fixed $k$, $\bbK_{k,n}$ is a nested sequence of compact sets, 
and Lemma 
\ref{LmHilbContr} shows
that the diameter of $\bbK_{k,n}$ with respect to the Hilbert metric is less then
$(2\ln R) (\tanh({\ln R}/{2}))^{n-k-1}.$
Hence Lemma \ref{LmHil-Norm} shows that
$\cap_{n>k} \bbK_{k,n}$
is a single point, which we call $v_k.$
Since $ A_{k-1} (\cap_{n>k} \bbK_{k,n})=\cap_{n>k-1} \bbK_{k-1,n}$, it follows that
$ A_{k-1} v_k=\lambda_{k-1} v_{k-1}$ for some $\lambda_{k-1}>0.$

Next, let $u_0$ be an arbitrary vector with
$$ \|u_0\|=1,~~ u_0(i)>\eps_0 \text{ for each } i. $$
Let $u_n={M_n^T u_0}/{\|M_n^T u_0\|},$
$\tlambda_n=\| A_n^T u_n\|.$ Note that $u_n\in\brcK(R).$

Then $\{u_n\}$ and $\{v_n\}$ satisfy statements (a)--(e) of Proposition \ref{Propabc}.
Indeed, (a) holds by construction. (b) holds since for each vector $w$ in $\brcK(R)$ of unit norm
$$ \min_i w(i)\geq \frac{\max_i w(i)}{R}\geq \frac{1}{dR} . $$
(c) holds since each entry of $u_n(i)$ and $v_n(i)$ is squeezed between 
$1/R$ and $1$ while each entry of $ A_n$ is between $\eps_0$ and $\sqrt{K_0}.$

We prove the first inequality of part (d), the second is similar.
By Lemma \ref{LmHilbContr}, 
$$ d(M_{n, n+k} v, v_n)\leq \eps_k:=2 (\ln R) \left(\tanh \frac{\ln R}{2}\right)^{k-1}. $$
Note that $\eps_k$ can be made as close to $0$ as we wish by taking $k$ large.
By the definition of the Hilbert metric, there is a number $a_{n,k}$ such that
$$ a_{n,k} v_n \leq \frac{M_{n, n+k} v}{\|M_{n, n+k} v\|} \leq a_{n,k} e^{\eps_k} v_n. $$
Taking the norm, we see that $e^{-\eps_k} \leq a_{n,k}\leq 1. $ This proves part (d) for $k'$ such
that $e^{\eps_{k'}}\leq 1+\delta.$ 

Next,
$$ \langle u_n, v_n\rangle =\left\langle \frac{M_{k,n}^T u_k}{\tLambda_n/\tLambda_{k}}, v_n\right\rangle=
\frac{1}{\tLambda_n/\tLambda_{k}} \langle u_k, M_{k,n} v_n \rangle =
\frac{\Lambda_n/\Lambda_{k}}{\tLambda_n/\tLambda_{k}} \langle u_k, v_k \rangle .$$
Due to parts (a) and (b) proved above,  $\langle u_j, v_j\rangle$ are uniformly bounded from above and below, 
i.e., $\eps_0 d\leq \langle u_j, v_j \rangle\leq 1,$ proving the first inequality of part (e).
To prove the second inequality, we note that by the foregoing discussion
there is a constant $L$ such that for each $j$ and $n$ we have
$$ \frac{1}{L} v_{n-1} \leq  A_{n-1} e_j \leq L v_{n-1}. $$
Applying $M_{k, n-1}$ to this inequality and using that
$M_{k, n-1} v_{n-1}=\Lambda_{n-1}/\Lambda_k v_k$, we get
$$ \frac{v_k(i)}{L} \leq \frac{M_{k,n} (i,j)}{\Lambda_{n-1}/\Lambda_k}\leq L v_{k}(i). $$
Combining this with parts (b) and (c) established above, we obtain the second inequality of part (e).  
The proof of Proposition \ref{Propabc} is complete. 
\end{proof}

\section{Skipping generations}
\label{AppSkip}

\begin{proof}[Proof of Proposition \ref{PrRarify}]
(a) If Assumption 1 is satisfied, then the probability to survive till
time $l(n+1)-1$ starting from a single particle at time $ln$ is bounded from below. One of the surviving particles will have two or more offspring of type $i$ with probability bounded from below.

To prove parts (b) and (c), let us consider $l=2$. Then the result for larger $l$ follows similarly by
induction since particles of generation $l+1$ are children of particles of generation $l.$ To prove
part (b), it suffices to show
that $\mathbb{E}((Z_{n+2}(i;k))^2|Z_n=e_j)\leq \bar{K}$, where $Z_{n+2}(i;k)$ is the number of
particles of type $i$ at time $n+2$ whose parents have type $k$.
In other words, it suffices to bound $\EXP(Y^2)$, where $Y=\sum_{m=1}^N X_m $, $X_m$ are independent, have common distribution $\cX$, and are independent of the random variable $N$,
where also $\EXP(\cX^2)\leq K_1,$ $\EXP(N^2)\leq K_2$. Note that
\[
\EXP(Y^2) = \EXP(N\EXP(\cX^2) + (N(N-1)/2)(\EXP(\cX))^2),
\] which gives the desired bound. 

Likewise, to prove (c) it suffices to show that the random variables $Y_n^2$ are uniformly integrable, where 
$ Y_n=\sum_{m=1}^{N_n} X_{m,n} $, $X_{m,n}$ are independent, have common distribution $\cX_n$ and are independent of the random variable $N_n$,
and $\cX_n^2,N_n^2$ are uniformly integrable. We have
$$ Y_n^2=Y_n^2 \chi_{\{N_n >M\}}+Y_n^2 \chi_{\{N_n \leq M\}}. $$
The expectation of the first term equals 
\[
\EXP\left(Y_n^2 \chi_{\{N_n >M\}}\right) = \EXP\left(N_n\EXP(\cX_n^2)\chi_{\{N_n >M\}} + 
\frac{N_n(N_n-1}{2}\chi_{\{N_n >M\}}(\EXP(\cX_n))^2\right).
\]
This expression can be made arbitrarily small by choosing a sufficiently large $M$
since $\EXP(\cX_n^2)$ are uniformly bounded and $N_n^2$ are uniformly integrable. 
For the second term we have

\[
Y_n^2 \chi_{\{N_n \leq M\}} \leq \left(\sum_{m=1}^M X_{m,n}\right)^2,
\]
which is uniformly integrable due to the uniform integrability of $\cX^2_n.$

It remains to establish (d). Choose $l$ so that
\begin{equation}
\label{LargeExp}  
\left(1+\frac{\eps_0}{2}\right)^l>\fb.
\end{equation}
It suffices to show that for each $j$ 
\begin{equation}\label{lsteps}
 \PROB(_jZ_l=\mathbf{0})\geq \eps_1, 
 \end{equation}
where $\eps_1$ depends only on $\eps_0$ and $\fb.$

Given $j\in S$ and $0 \leq n \leq l$, we
will say that $(n,j)$ is 1-unstable if $n < l$ and
$$ \PROB(Z_{n+1}=\mathbf{0}|Z_n=e_j)\geq \frac{\eps_0}{2}. $$
Otherwise we will say that $(n,j)$ is 1-stable.

For $p>0$, we say that
$(n,j)$ is $(p+1)$-unstable if it is either $p-$unstable or $n < l - 1$ and 
$$ \PROB(Z_{n+1}(m)=0\;\; \forall m: (n+1, m) \text{ is }p\text{-stable}|Z_n=e_j)\geq \frac{\eps_0}{2}. $$

Otherwise we will say that $(n,j)$ is $(p+1)$-stable. For example $(n,j)$ is 2-unstable if 
either it is 1-unstable, or, with probability which is not too small, all its children are 1-unstable.
 
We call $l$-stable pairs simply stable. A particle from generation $n$ of type $j$ will be called stable if the pair $(n,j)$ is stable. We claim that all generation $0$ particles are unstable. Indeed,
by definition, each stable particle has at least one stable child with probability at least $1-\frac{\eps_0}{2}$,
and, by Assumption 1, it has at least two stable children with probability at least $\eps_0.$ Accordingly, for each
stable particle, the expected number of its stable children is at least $1+\frac{\eps_0}{2}.$
Hence, had $(0,j)$ been stable, the expected number of its (stable) decedents after $l$ generations would have been
greater than $(1+\frac{\eps_0}{2})^l\geq \fb$ contradicting~\eqref{NotSuperCrit}. 

Set $M = {4\fb}/{\eps_0}$. Then, from (\ref{NotSuperCrit}), it follows that for each $j \in S$ and $n \geq 0$,
\[
\PROB(|Z_{n+1}| \geq M | Z_n = e_j) \leq \frac{\eps_0}{4}.
\] 
Define $\eta_p$ as follows
\[
\eta_p = \inf \PROB(Z_{n+p}=\mathbf{0}|Z_n=e_j),
\]
where the infimum is taken over all $(n,j)$ which are $p-$unstable. Note that \[
\eta_1 \geq \frac{\eps_0}{2}~~~ \text{and}~~~ \eta_p \geq \frac{\eps_0}{4} \eta_{p-1}^M,
\]
where the factor ${\eps_0}/{4}$ represents the probability that the original particle had fewer than $M$ children all of which are $(p-1) -$unstable and $\eta_{p-1}^M$ is the probability that all these children leave no descendants after $p-1$ steps. This proves $(\ref{lsteps})$ with $l$ given by $(\ref{LargeExp})$ and $\eps_1 = \eta_l$.
\end{proof}


\acks  While working on this
article,  all authors were partially supported by UMD REU grant
DMS-1359307. In addition,
D. Dolgopyat was supported by NSF grant   DMS-1362064   
and L. Koralov was supported by NSF grant DMS-1309084  and ARO grant W911NF1710419.

\end{document}